\documentclass[reqno]{amsart}

\usepackage{amssymb,amsmath,amsthm,amsfonts,mathtools,bbm,mathrsfs,enumitem,graphics,float,multirow}
\usepackage[normalem]{ulem}
\usepackage{hyperref}

\hypersetup{
    colorlinks,
    linkcolor={blue!80!black},
    citecolor={red!80!black},
    urlcolor={blue!80!black}
}
\usepackage{xargs}                      
\usepackage{xcolor}  
\usepackage[colorinlistoftodos,prependcaption,textsize=tiny]{todonotes}
\newcommandx{\unsure}[2][1=]{\todo[linecolor=red,backgroundcolor=red!25,bordercolor=red,#1]{#2}}
\newcommandx{\change}[2][1=]{\todo[linecolor=blue,backgroundcolor=orange!25,bordercolor=blue,#1]{#2}}
\newcommandx{\info}[2][1=]{\todo[linecolor=OliveGreen,backgroundcolor=OliveGreen!25,bordercolor=OliveGreen,#1]{#2}}

\interdisplaylinepenalty=5000
\usepackage[top=1.6in,bottom=1.5in,left=1.4in,right=1.4in]{geometry}
\allowdisplaybreaks
\setlength{\belowcaptionskip}{-0pt}
\textfloatsep 0.0pt plus 2.0pt minus 4.0pt
\usepackage{cellspace}
\setlength\cellspacetoplimit{5pt}
\setlength\cellspacebottomlimit{5pt}

\newcommand\redout{\bgroup\markoverwith{\textcolor{red}{\rule[0.5ex]{2pt}{0.8pt}}}\ULon}

\newtheorem{theorem}{Theorem}
\newtheorem{lemma}[theorem]{Lemma}
\newtheorem{claim}[theorem]{Claim}
\newtheorem{fact}[theorem]{Fact}

\usepackage{caption}
\usepackage{comment}
\usepackage{tikz}
\usetikzlibrary{calc}

\tikzset{font= }

\newcommand\nc\newcommand
\nc\bfa{{\boldsymbol a}}\nc\bfA{{\boldsymbol A}}\nc\cA{{\mathscr A}}
\nc\bfb{{\boldsymbol b}}\nc\bfB{{\boldsymbol B}}\nc\cB{{\mathscr B}}
\nc\bfc{{\boldsymbol c}}\nc\bfC{{\boldsymbol C}}\nc\cC{{\mathscr C}}
\nc\bfd{{\boldsymbol d}}\nc\bfD{{\boldsymbol D}}\nc\cD{{\mathscr D}}
\nc\bfe{{\boldsymbol e}}\nc\bfE{{\boldsymbol E}}\nc\cE{{\mathscr E}}
\nc\bff{{\boldsymbol f}}\nc\bfF{{\boldsymbol F}}\nc\cF{{\mathscr F}}
\nc\bfg{{\boldsymbol g}}\nc\bfG{{\boldsymbol G}}\nc\cG{{\mathscr G}}
\nc\bfh{{\boldsymbol h}}\nc\bfH{{\boldsymbol H}}\nc\cH{{\mathscr H}}\nc\fH{{\mathfrak H}}
\nc\bfi{{\boldsymbol i}}\nc\bfI{{\boldsymbol I}}\nc\cI{{\mathcal I}}
\nc\bfj{{\boldsymbol j}}\nc\bfJ{{\boldsymbol J}}\nc\cJ{{\mathscr J}}
\nc\bfk{{\boldsymbol k}}\nc\bfK{{\boldsymbol K}}\nc\cK{{\mathscr K}}
\nc\bfl{{\boldsymbol l}}\nc\bfL{{\boldsymbol L}}\nc\cL{{\mathscr L}}
\nc\bfm{{\boldsymbol m}}\nc\bfM{{\boldsymbol M}}\nc\cM{{\mathscr M}}
\nc\bfn{{\boldsymbol n}}\nc\bfN{{\boldsymbol N}}\nc\sN{{\mathscr N}}
\nc\bfo{{\boldsymbol o}}\nc\bfO{{\boldsymbol O}}\nc\cO{{\mathscr O}}
\nc\bfp{{\boldsymbol p}}\nc\bfP{{\boldsymbol P}}\nc\cP{{\mathscr P}}
\nc\bfq{{\boldsymbol q}}\nc\bfQ{{\boldsymbol Q}}\nc\cQ{{\mathscr Q}}
\nc\bfr{{\boldsymbol r}}\nc\bfR{{\boldsymbol R}}\nc\cR{{\mathscr R}}
\nc\bfs{{\boldsymbol s}}\nc\bfS{{\boldsymbol S}}\nc\cS{{\mathscr S}}
\nc\bft{{\boldsymbol t}}\nc\bfT{{\boldsymbol T}}\nc\cT{{\mathscr T}}
\nc\bfu{{\boldsymbol u}}\nc\bfU{{\boldsymbol U}}\nc\cU{{\mathscr U}}
\nc\bfv{{\boldsymbol v}}\nc\bfV{{\boldsymbol V}}\nc\cV{{\mathscr V}}
\nc\bfw{{\boldsymbol w}}\nc\bfW{{\boldsymbol W}}\nc\cW{{\mathscr W}}
\nc\bfx{{\boldsymbol x}}\nc\bfX{{\boldsymbol X}}\nc\cX{{\mathscr X}}
\nc\bfy{{\boldsymbol y}}\nc\bfY{{\boldsymbol Y}}\nc\cY{{\mathscr Y}}
\nc\bfz{{\boldsymbol z}}\nc\bfZ{{\boldsymbol Z}}\nc\cZ{{\mathscr Z}}


\renewcommand{\le}{\leqslant}
\renewcommand{\leq}{\leqslant}

\renewcommand{\geq}{\geqslant}


\nc{\Cay}{{\sf Cay}}

\usepackage{pgfplots}
\usepackage{tkz-euclide}

\nc{\ff}{{\mathbb F}}

\newcommand\remove[1]{}

\title{The Time of Bootstrap Percolation in high dimensions}
\author[]{Fengxing Zhu}\thanks{Institute for Systems Research and Department of ECE, University of Maryland, College Park, MD 20742, USA, fengxing@terpmail.umd.edu. Supported in part by NSF grant CCF 2330909.}

\date{}

\begin{document}
\begin{abstract}
 We consider the $d$-neighbor bootstrap percolation process on the $d$-dimensional torus, with vertex set $V=\{1,\cdots,n\}^d$ and edge set $\{xy:\sum_{i=1}^d|x_i-y_i (\text{mod} \; n)|=1\}$.
 We determine the percolation time up to a constant factor with high probability when the initial infection probability is in a certain range and the infection threshold is $d$, extending one of the two main theorems from Balister,Bollob{\'a}s, and Smith (2016) about the percolation time with the infection threshold equal to $2$ on the two-dimensional torus.  
\end{abstract}
\maketitle

\section{Introduction}
The process of $r$-neighbor bootstrap percolation on an undirected graph $G(V,E)$, with an integer $r\geq 1$, was introduced by Chalupa, Leith, and Reich \cite{Chalupa_1979}. In this process, each vertex is either infected or healthy, and once a vertex becomes infected, it remains infected forever. Initially, a set of vertices $A_0$ is infected, and let $A_i$ denote the set of infected vertices up to step $i$. The bootstrap percolation process evolves in discrete steps as follows: for $i>0$,
$$A_i = A_{i-1} \cup \{v \in V:|N(v) \cap A_{i-1}| \geq r \},$$
where $N(v)$ represents the neighborhood of the vertex $v$. In simple terms, a vertex that is not initially infected becomes infected at step $i$ if it has at least $r$ infected neighbors at step $i-1$. Here, we define $r$ as the infection threshold for all $v \in V$ and a contagious set as a set of initially infected vertices that leads to the complete infection of the entire graph. Percolation occurs if all vertices have been infected by the end of the process. In random bootstrap percolation, each vertex is independently and randomly infected at the beginning with a probability $p$. The central question in the problem of random bootstrap percolation is to determine the critical probability, denoted as $p_c(G,r)$,
$$p_c(G,r)=\sup\{p \in (0,1): \mathbb{P}_p(A \; \text{percolates on} \: G) \leq \frac{1}{2} \},$$
where $A$ represents the set of initially infected vertices. 

 Much research has been dedicated to examining the critical probability on $d$-dimensional grid graphs $[n]^d$. Notable works include \cite{Aizenman_1988,Cirillo,CERF200269,Holroyd2002SharpMT},
with the work in \cite{Balogh} establishing a sharp estimate for $p_c([n]^d,r)$, $2 \leq r \leq d$.

In addition to grid graphs the critical probability on the $n$-dimensional binary hypercube $Q_n$ has been investigated for various infection thresholds. In \cite{balogh_bollobas_2006}, the authors obtained a tight estimate for the critical probability $p_c(Q_n,2)$ up to a constant factor, with a sharper estimate later provided in \cite{balogh_bollobas_morris_2010}. In addition to studying the critical probability when the infection threshold is a constant, \cite{balogh_bollobas_morris_2009} derived a sharp estimate for $p_c(Q_n,\frac{n}{2})$ and the exact second-order term. More recently, the main result from \cite{balogh_bollobas_2006} on the binary Hamming cube was extended to the $q$-ary Hamming cube in \cite{kang2024bootstarp}.

While significant efforts have been dedicated to studying the critical probability of the process,  estimating percolation time, denoted by $T=\min \{t: A_t=V(G)\}$, has also garnered attention. In the deterministic setting, papers \cite{Michal}, \cite{Ivailo}, and \cite{Benevides} estimated the maximum percolation time for the process on the $n$-dimensional hypercube and the $[n]^2$ grid. In the probabilistic setting,  papers \cite{Bollobs2012TheTO}, \cite{Paul}, and \cite{Bollobs_bootstrap} estimated the distribution of percolation time on a $[n]^d$ grid.

In particular, from \cite{Paul} we have the following theorem.
\begin{theorem}  \label{theorem: Bollobas}
Let $ 0<p=p(n) <1$ be such that $\liminf p \log \log n > 2\lambda$ and 
$1-p=n^{-o(1)}$ (that is $\log 1/(1-p) \ll \log n$). Let  $T$ denotes the percolation time of a $p$-random subset of $[n]^2$ under the $2$-neighbor bootstrap percolation process and $\lambda=\frac{\pi^2}{18}$. Then we have 
$$T=\frac{(1+o(1))\log n}{ 2 \log (1/(1-p))},$$
with high probability as $n \rightarrow \infty$. 
\end{theorem}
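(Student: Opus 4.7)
Write $T^\star := \log n/(2\log(1/(1-p)))$. The plan is to prove $T = (1\pm o(1))T^\star$ w.h.p.\ through matched upper and lower bounds, exploiting the picture that in this supercritical regime the process consists of many small droplets nucleating rapidly and then growing to merge.

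For the upper bound, I would first establish that nucleation is both fast and dense. Partition $[n]^2$ into disjoint sub-grids of side $\ell$, where $\ell$ is chosen so that Holroyd's sharp threshold $p_c([\ell]^2,2)=(1+o(1))\lambda/\log\ell$ combined with the hypothesis $\liminf p\log\log n > 2\lambda$ ensures each sub-grid is internally spanned with failure probability $o(\ell^2/n^2)$, while $\ell=o(n)$. A union bound over the $(n/\ell)^2$ sub-grids then confirms simultaneous nucleation in time $o(T^\star)$. The post-nucleation growth is governed by a ``gap-closing'' estimate: once the infection has reached both ends of a gap of length $g$ inside a row or column, the gap closes in time $g/2$ via bidirectional spread. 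After all sub-grids have nucleated, the remaining largest ``effective gap'' across the torus is of order $\log n/\log(1/(1-p))$ (as the maximum of $\mathrm{Geom}(p)$-like variables over $n^2$ trials), so the infection completes in time $(1+o(1))T^\star$.

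For the lower bound, I would produce, w.h.p., a vertex $v$ whose infection is delayed by an obstructive initial pattern. The naive argument from finite propagation speed applied to the largest totally-empty square is too weak, because such a square has side only $O(\sqrt{\log n/\log(1/(1-p))}) \ll T^\star$; the correct obstruction must exploit the fact that the $2$-neighbor rule cannot fire unless two independent arms of infection meet. I would apply a first-moment argument to an event of the form ``a rectangle $R$ of shape $\Theta(T^\star)\times O(1)$ together with suitable flanking rows is empty in a specific pattern,'' so that infection can enter $R$ only through its short ends and then takes at least $(1-\varepsilon)T^\star$ steps to reach the midpoint. Matching the event probability against the $n^2$ candidate positions pins down the asymptotic constant $1/2$.

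The main obstacle, in my view, is the deterministic delay lemma underlying the lower bound: precisely certifying that the proposed obstruction forces slow percolation. The $2$-neighbor rule admits ``shortcut'' infection paths through locally dense sub-configurations, so any slow-infection certificate must exclude such shortcuts---typically via an iterated Aizenman--Lebowitz-style decomposition showing that a hypothetical fast infection implies a densely-seeded internally spanned intermediate rectangle, whose existence the obstruction can be designed to preclude at every scale. The upper bound is technically more routine given the gap-closing estimate, though matching the constant $1/2$ exactly requires care during the late stage of percolation when neighboring droplets' boundaries begin to merge.
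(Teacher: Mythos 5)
Note first that the paper does not prove Theorem~\ref{theorem: Bollobas} at all: it is quoted verbatim from \cite{Paul}, and the body of the paper only proves the weaker higher-dimensional analogue (Theorem~\ref{theoremV1}, which gives $T=\Theta(\log_{1/(1-p)}n)$ rather than the sharp constant). The useful comparison is therefore against the strategy the paper imports from \cite{Paul} and generalizes in Sections~2--4.

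For the upper bound, your approach is genuinely different from the paper's, and the difference is where the gap lies. The paper's (and \cite{Paul}'s) route is to bound $\mathbb{P}(\text{the origin is uninfected at time }t)$ directly, by showing this event forces a path of $L\times L$ squares each of which is ``not good'' (not internally spanned outside its boundary), and then controlling that probability via the recursive inequality $\eta_{2m}\le C\eta_m^3+Cm^{d-1}(1-p)^{4m-8}$. The paper explicitly explains in Section~2 why anything built around counting paths of uninfected vertices or around a droplet picture runs into a combinatorial wall, and why the ``good except for its sides'' dichotomy is the right surrogate. Your proposal instead relies on a nucleation-then-merge heuristic with a ``gap-closing'' estimate. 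The circular step is that a gap of length $g$ inside a row closes in time $g/2$ \emph{only} if the adjacent rows are already infected; establishing that globally in this regime, with the right constant, is essentially the original problem restated. Similarly, after nucleation the sub-grids are internally spanned, but the \emph{time} to internally span a typical $[\ell]^2$ sub-grid for $\ell$ at the critical scale $\exp^{(1)}(\Theta(\lambda/p))=n^{o(1)}$ is not $o(T^\star)$ for free --- bounding it requires exactly the path-of-bad-squares machinery you are trying to bypass. You also invert the relative difficulty: you call the upper bound ``technically more routine,'' whereas in the paper (and in \cite{Paul}) the upper bound is the bulk of the work and the lower bound is half a page.

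For the lower bound, your obstruction (a $\Theta(T^\star)\times O(1)$ empty strip) is essentially the paper's $[2t+1]\times[2]$ empty rectangle, and this part of your plan is on target. However, you over-engineer the ``deterministic delay lemma.'' The worry about shortcut infection paths is moot: bootstrap percolation is monotone, so an empty $[2t+1]\times[2]$ rectangle centered at $v$ keeps $v$ uninfected at time $t$ \emph{even if every other vertex of $\mathbb{Z}^2$ is initially infected}. The paper simply invokes the extremal theorem of Bollob\'as--Smith--Uzzell (Theorem~\ref{theoremV2}), which says exactly that $P_{2,2}(t)$ (the $[2t+1]\times[2]$ strip) is the minimum-size set whose emptiness blocks infection of the origin up to time $t$; the ``iterated Aizenman--Lebowitz decomposition'' you sketch is unnecessary. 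Also, to pin the constant $1/2$, the width must be exactly $2$ --- ``$O(1)$'' is not sharp enough --- because the obstruction probability is $(1-p)^{2(2t+1)}$ and matching it to $n^{-2}$ forces $t\sim\log n/(2\log(1/(1-p)))$.

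In short: your lower-bound idea coincides with the paper's modulo unneeded machinery, but your upper-bound plan replaces the paper's essential ingredient (the good/bad-square recursion and path-counting) with a heuristic whose central gap-closing step is not substantiated and is where the real difficulty lives.
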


In this chapter we investigate the distribution of the time of bootstrap percolation on the $d$-dimensional discrete torus $\mathbb{T}_n^d$. As our main result we extend Theorem \ref{theorem: Bollobas} in \cite{Paul} from the two-dimensional case to $d$-dimensions.

We highlight the primary challenges in extending Theorem~\ref{theorem: Bollobas} to higher dimensions, as well as our main contributions. In two dimensions \cite{Paul}, the key intuition is that 
the event in which a vertex $x$ remains uninfected at time $t$ is ``equivalent to'' the existence of an empty line segment of length roughly $t$ ``near'' $x$. In the higher dimensions, it is not immediately clear whether this key intuition still holds. However, we show that it remains valid in higher dimensions, provided that the infection threshold equals the dimension.

In the two-dimensional case, the idea of analyzing the intersection between a path of uninfected vertices and squares of a certain size was effective in reducing the combinatorial factor from counting the number of such paths. In higher dimensions, however, it is not obvious what geometric objects should be used to achieve a similar reduction. We find that replacing two-dimensional squares with their higher-dimensional analogs (cubes) is a viable approach.

Another challenge lies in generalizing the definition of the interior of a square from two dimensions to higher dimensions in a way that serves our purposes. In the two-dimensional setting, the notion of a square's interior was introduced to properly define certain events, and a similar concept is required in higher dimensions. We address these challenges and provide the necessary definitions to extend the analysis.

Our main result is given by the following theorem.

\begin{theorem} \label{theoremV1}
   Consider a graph $G$ as $G=\mathbb{T}_n^d$ and let $T$ be the percolation time. Assume that every vertex on the graph $G$ is initially infected with probability $p(n)$ independent of any other vertex and the infection threshold $r=d$. Assume that $ p(n) \geq \frac{C}{\log^{(d)}(n)}$ and $1-p=n^{-o(1)}$, where $C >0$ is sufficiently large and $\log^{(r)}(\cdot)$ denotes iterating the logarithm $r$ times. Then we have
   $$T= \Theta \left(\log_\frac{1}{1-p}n \right).$$
 with high probability as $n \rightarrow \infty$. 
\end{theorem}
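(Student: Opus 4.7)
\medskip
\noindent\textbf{Proof plan.} The plan is to prove matching upper and lower bounds on $T$, with the bulk of the effort devoted to the upper bound.

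For the lower bound $T = \Omega(\log_{1/(1-p)} n)$, I would show via the second moment method that, writing $L = c \log_{1/(1-p)} n$ for a sufficiently small constant $c > 0$, with high probability there exist a vertex $v$ and a coordinate direction $e_i$ such that the axis-aligned segment $\{v + k e_i : |k| \le L\}$ contains no initially infected vertex. The expected number of such empty segments is on the order of $n^d \cdot d \cdot (1-p)^{2L+1}$, which diverges for $c$ small, and covariances between segments are controlled because segments that are not subsegments of one another share only a bounded number of vertices. I would then argue that such a segment forces $v$ to remain uninfected for $\Omega(L)$ steps: every vertex on the segment begins with both of its in-line neighbors uninfected, so for any such vertex to be infected at least $d$ of the $2d-2$ transverse neighbors must be infected, and a straightforward induction on the ``depth of penetration'' from the two endpoints shows that the infected region inside the segment advances by at most a constant per time step.

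For the upper bound $T = O(\log_{1/(1-p)} n)$, the core step is a deterministic \emph{empty line} criterion: if $v \notin A_t$, then near $v$ there is an empty straight segment of length $\Omega(t)$. The starting point is a simple pigeonhole: if $v$ has at most $d-1$ infected neighbors at time $t-1$, then at least $d+1$ of its $2d$ neighbors are uninfected at time $t-1$, so some coordinate direction $i$ has both $v + e_i$ and $v - e_i$ uninfected at time $t-1$. Iterating this observation yields a connected lattice animal of uninfected vertices rooted at $v$ with depth $t$; the substantive content of the criterion is to extract from it a long straight empty segment rather than a branching object. This reduction is carried out by covering the ambient torus with translates of cubes of a carefully chosen scale and then applying a pigeonhole over coordinate directions within each cube, the higher-dimensional analog of the square-intersection trick of \cite{Paul}. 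Once the criterion is established, a union bound over at most $n^d \cdot 2d \cdot L$ candidate segments, combined with the bound $(1-p)^L$ on the probability that a given segment is empty, shows that with high probability every vertex is infected by time $L = C \log_{1/(1-p)} n$ when $C$ is sufficiently large.

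The principal obstacle I expect is the reduction from a branching empty tree to a single straight empty segment. In two dimensions with $r=2$, planar duality neatly replaces the tree by a dual empty path whose entropy is controlled by intersections with a grid of squares, but in $d$ dimensions the dual object must be constructed directly. This is where the correct notion of the \emph{interior} of a $d$-cube becomes essential: it has to be defined so that the intersection of the empty structure with each cube carries enough information to drive the probability bound $(1-p)^{(\,\cdot\,)}$ but has small enough combinatorial entropy to survive the union bound over configurations. Matching these two requirements fixes the scale of the cubes, and it is here that the constant $C$ in the hypothesis $p \ge C/\log^{(d)}(n)$ must be taken large enough to dominate the per-cube entropy.
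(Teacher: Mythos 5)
Your lower-bound argument is incorrect. A one-dimensional empty segment $\{v+ke_i : |k|\le L\}$ is not an obstruction in dimension $d\ge 2$: an interior vertex of the segment has $2(d-1)\ge d$ transverse neighbors, all of which lie off the segment and hence are infected once the surrounding torus percolates; at that moment the entire segment is infected in a single step. The claimed induction on ``depth of penetration from the two endpoints'' fails precisely because the transverse neighbors are infected by the bulk of the torus, not by erosion along the segment, so the segment persists for only $O(1)$ time, not $\Omega(L)$. The correct obstruction is the thickened rectangle $[2t+1]\times[2]^{d-1}$ (equivalently the set $P_{d,d}(t)$ from Theorem~\ref{theoremV2}): a vertex in its interior has $d+1$ uninfected neighbors inside the rectangle, hence at most $d-1$ infected neighbors, and infection can only advance from the two ends, one layer per step, taking $\ge t$ steps to reach the center. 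With this replacement your first-moment/disjointness argument goes through as in the paper.

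The upper-bound plan also has a gap at its core. The asserted deterministic ``empty line criterion'' --- $v\notin A_t$ implies a straight empty segment of length $\Omega(t)$ near $v$ --- is false and is not what the paper proves. What is true deterministically is that $v\notin A_t$ forces a monotone lattice path of $t$ initially uninfected vertices (moving only in the positive coordinate directions); such a path can zigzag, e.g.\ a width-$2$ staircase of length $\Theta(t)$ whose longest straight subsegment is $O(1)$. The paper's actual argument is probabilistic, not deterministic: after a burn-in time $t'=L^d$, split the uninfected path at the last point where it bends, bound the tail straight part by $(1-p)^{t-t'-M}$, and show the bending prefix of length $M$ must intersect a chain of $[L]^d$ cubes, each ``bad'' or ``semi-good.'' The key quantitative input is the recursion $\eta_{2m}\le C(d)\eta_m^3 + C(d)m^{d-1}(1-p)^{4m-8}$ for the probability $\eta_m$ that $[m]^d$ is bad (Lemmas~\ref{lemmaV1}, \ref{lemmaV3}), established by a doubling argument with a further induction on dimension, and the resulting bound $\eta_L\le B(d)L^{d-1}(1-p)^{2L-8}$ at the chosen scale $L$ (Lemma~\ref{lemmaV4}). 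This machinery --- the good/semi-good/bad trichotomy, the notion of ``interior'' and ``sides'' of a cube, the dimensional induction, and the buffer bound for semi-good cubes --- is the technical heart of the proof, and your sketch does not supply a substitute for it. As written, the proposal would not close.
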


 Let us explain our strategy in the proof of the lower bound of the percolation time. For the two-dimensional case, a natural example of an event that would prevent percolation from happening by time $t$ is the existence of an initially uninfected $[2t + 1] \times [2]$ rectangle. For the $d$-dimensional case, an event that there exists an initially uninfected $[2t+1] \times [2]^{d-1}$ rectangle initially would prevent percolation from happening by time $t$ by \cite{Bollobs2012TheTO}. One
can easily show that the largest $t$ for which such a rectangle is likely to exist is about
$\log_{\frac{1}{1-p}}n$. This observation essentially proves the lower bound of
Theorem \ref{theoremV1}. 

We briefly outline the key ideas behind the proof of the upper bound in Theorem~\ref{theoremV1}. The strategy is to show that if a vertex $x$ remains uninfected at time $t$, then this deterministically implies the existence of a "path" of nearby $[L]^d$ cubes, each of which is not internally spanned. By choosing $L$ appropriately (and hence controlling the length of the path), we can show that the probability of this event is small.

However, this summary is so brief and somewhat misleading. In fact, requiring each $[L]^d$ cube to be internally spanned is so strong that it is not necessary for bounding the probability of interest and, moreover, the probability of this stronger event differs significantly from the probability of interest. As a result, we must consider a weakened version of the condition, which introduces additional technical complexity. A more detailed sketch of the proof will be provided in Section~\ref{sect 3}.

Below by $C(d)$ we denote a constant which may depend on $d$. We will use $C(d)$ as generic notation, where the specific form of this function may be different in different expressions.

We write $[A_0]=\cup_{t=0}^{\infty} A_t$ and call $[A]$ the span of $A$. We say $A$ percolates on $G$ if $[A]=V(G)$ and a set $X \subset [n]^d$ is internally spanned if $X \subset [A \cup X]$.

A set of vertices is defined as empty if all its vertices are initially uninfected and call it non-empty otherwise.

\section{Sketch of the proof for the upper bound } \label{sect 3}

The upper bound in Theorem \ref{theoremV1} is roughly saying that the initially uninfected $[2t+1] \times [2]^{d-1}$ rectangles are the only obstacles to percolation by time $t$. 

Suppose a site $x$ in $\mathbb{T}_n^d$ is uninfected at time $t$. It is easy to see that at least $d+1$ neighbors of $x$ must be
uninfected at time $t-1$. Thus at least one of its neighbors among $(x_1+1,x_2,\cdots,x_d)$, $(x_1,x_2+1,\cdots,x_d)$, $\cdots$, $(x_1,x_2,\cdots,x_d+1)$ is uninfected at time $t-1$ and say it is $y$. Then at least one of $y$'s neighbors among $(y_1+1,y_2,\cdots,y_d)$, $(y_1,y_2+1,\cdots,y_d)$, $\cdots$, $(y_1,y_2,\cdots,y_d+1)$ is uninfected at time $t-2$. In fact, it is easy to see that there must exist a sequence of
$t$ initially uninfected vertices, starting with $x$, and
continuing on the direction parallel to the standard vector $e_1,e_2,\cdots, \text{or} \; e_d$ each time. 

We would like to show that by far the most probable way for this to occur is for this path to be aligned to form a line segment starting at $x$, or more specifically, we
would like to show that the probability that the uninfected paths exist is not much more than $(1-p)^{t}$ , which is the probability that a given line segment of length $t$ is initially uninfected.

One possible attempt of a proof might go as follows. Assume $x$ is uninfected at time $t$ and thus there is a path of uninfected vertices starting at $x$ and continuing on the direction parallel to $e_1,e_2,\cdots,e_d$ each time. This path, denoted by $P$, of uninfected vertices may not be a line segment and then this would cause a big combinatorial factors about the choice of such path.

However, it is possible to modify this idea. Following the ideas from \cite{Paul}, rather than counting paths along directions parallel to the standard vectors $e_1,e_2,\cdots, e_d$ of vertices individually, we look at the intersection of these paths with cubes $[L]^d$ and count these. First
we allow an initial time $t'=  o(t)$. By this time
we expect nearly all internally spanned cubes  $[L]^d$ to have been infected. Now consider just the first $t-t'$ vertices in the path $P$: at
time $t'$ they are still uninfected, and they intersect a path of cubes $[L]^d$ all of which are not internally spanned; we call such squares bad. Now we have an optimization question: how large should $L$ be to minimize the probability of this event 
that there is a path of bad cubes $[L]^d$ ? In order to have any hope,
the probability that a cube $[L]^d$ is bad should be at most $(1-p)^{(1+c)L}$ , for some $c > 0$.
This is because we would like to show that the probability there exists a path of bad cubes $[L]^d$ is about $(1 - p)^t$ , so we need the additional $c$ to overcome the combinatorial
factor coming from taking a union bound over all paths. Thus, $L$ must be large enough for the probability that a cube $[L]^d$ is bad to be small. Another reason $L$ should be large is to minimize the combinatorial factor. As $L$ increases, there are fewer paths of cubes $[L]^d$ inside a cube $[t-t']^{d}$, so the combinatorial factor decreases. On the other hand, $L$ cannot be too large, because the error time $t'=L^d$ must be $o(t)$.

This second attempt of a proof is also not quite right: the probability that a cube $[L]^d$ is bad, as we have defined it, is at least $(1 - p)^L$ because if any of the $2^{d-1}d$ ``sides'' of
the cube is empty then the cube cannot be internally spanned. On the other hand we have said that the probability needs to be at most $(1-p)^{(1+c)L}$, so our definition of bad cannot be the right one. The way around this is as follows. Let us define a cube $[L]^d$ is bad if it is not internally spanned except for its ``sides'', which will be defined precisely in Section \ref{Upper bound} . It turns out this definition is the right one to use for our purpose.

\section{Lower Bound}

To prove the lower bound for the percolation time $T$ we will use a theorem by Bollob{\'a}s, Smith and Uzzell \cite{Bollobs2012TheTO}.

First let us observe that the states of vertices at $l_1$ distance greater than $t$ from the vertex $x$ cannot affect whether or not the vertex $x$ is infected at time $t$. Therefore we can restrict ourselves to the $l_1$-ball
$$B_t(x)=\{y \in [n]^d:|x-y|_{l_1} \leq t \}.$$

The following theorem from \cite{Bollobs2012TheTO} will be used in the proof.
\begin{theorem} \label{theoremV2} 
   
   Let  $$\text{ex}_{d,r}(t):= \min \{|B_t(0) \backslash A_0|: 0 \notin A_t   \}.$$ and
   $$P_{d,r}(t):=\{x \in B_t(0):x_{d-r+2},\dots,x_d \in \{0,1\}\}.$$ where $x=(x_1,x_2,\dots,x_d)$.
   
   Then for every $ 2 \leq  r \leq d$
   $$\text{ex}_{d,r}(t)=|P_{d,r}(t)|.$$
\end{theorem}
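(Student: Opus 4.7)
The strategy is to prove the identity $\text{ex}_{d,r}(t) = |P_{d,r}(t)|$ in the two natural directions: the upper bound $\le$ by exhibiting an explicit extremal configuration, and the matching lower bound $\ge$ by induction on $t$ with $d,r$ fixed.

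For the upper bound, the candidate is $A_0 := \mathbb{Z}^d \setminus P_{d,r}(t)$, so $B_t(0) \setminus A_0 = P_{d,r}(t)$. To verify $0 \notin A_t$ I would prove the stronger claim that for every $0 \le s \le t$, the nested set $Q_s := \{x \in B_{t-s}(0) : x_{d-r+2},\dots,x_d \in \{0,1\}\}$ is uninfected at time $s$, by induction on $s$. In the inductive step, a vertex $x \in Q_{s+1} \subseteq Q_s$ has, in each of the first $d-r+1$ coordinate directions, both neighbors $x \pm e_i$ in $Q_s$ (flipping such a coordinate preserves the condition on the last $r-1$ entries and keeps the $l_1$-norm at most $t-s$); and in each of the last $r-1$ coordinate directions, the unique neighbor obtained by toggling $x_i$ inside $\{0,1\}$ also lies in $Q_s$. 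This produces at least $2(d-r+1)+(r-1) = 2d-r+1$ uninfected neighbors, hence at most $r-1$ infected ones, so $x$ is not infected at time $s+1$.

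For the lower bound, the base case $t=0$ is trivial. Assume the bound holds up to time $t-1$ and take any $A_0$ with $0 \notin A_t$. Then $0$ has at least $2d-r+1$ neighbors uninfected at time $t-1$; call this set $V$. For each $v \in V$, applying the inductive hypothesis at $v$ shows $B_{t-1}(v)$ contains at least $|P_{d,r}(t-1)|$ initially uninfected vertices. Each such ball is contained in $B_t(0)$, and the plan is to combine the $|V|$ inductive bounds into a single bound of $|P_{d,r}(t)|$ on $|B_t(0) \setminus A_0|$. Using the coordinate symmetry of the $r$-neighbor bootstrap dynamics, it suffices to handle the canonical direction set $V = \{\pm e_1,\dots,\pm e_{d-r+1}\} \cup \{e_{d-r+2},\dots,e_d\}$, which is precisely the pattern realized by the upper-bound construction.

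The main obstacle is this combinatorial reassembly, because the shifted balls $B_{t-1}(v)$ for $v \in V$ overlap substantially and a naive inclusion--exclusion is unwieldy. A cleaner route I would try is a shifting / compression argument in the spirit of classical extremal set theory: define, for each axis $i$, a shift that moves each uninfected vertex towards a designated extremal position in the $i$-th coordinate (towards $x_i \in \{0,1\}$ for $i \ge d-r+2$, and towards $|x_i|$ minimal for $i \le d-r+1$), and check that after each shift (i) the cardinality of the uninfected set in $B_t(0)$ does not increase and (ii) the origin remains uninfected at time $t$. Iterating these shifts over all coordinate directions in a well-chosen order drives the uninfected set into a translate of $P_{d,r}(t)$, giving the bound. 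The genuinely delicate point is verifying the dynamical compatibility of the shift with $r$-neighbor bootstrap: careless shifting can create new $r$-neighbor infections that cascade and eventually infect $0$, and handling this compatibility is where the real technical work would lie.
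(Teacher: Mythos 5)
This theorem is not proved in the paper at all; it is stated as a cited result from Bollob\'as, Smith, and Uzzell \cite{Bollobs2012TheTO}, so there is no in-paper proof to compare against. Evaluating your attempt on its own terms:

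Your upper-bound argument is correct and self-contained. Taking $A_0 = \mathbb{Z}^d \setminus P_{d,r}(t)$ and showing by induction on $s$ that $Q_s = \{x \in B_{t-s}(0): x_{d-r+2},\dots,x_d \in \{0,1\}\}$ remains uninfected at time $s$ works: a vertex $x \in Q_{s+1}$ has $2(d-r+1)$ uninfected neighbors from toggling the first $d-r+1$ coordinates (both directions stay in $Q_s$) and $r-1$ more from toggling each of the last $r-1$ coordinates within $\{0,1\}$, totaling $2d-r+1$ uninfected neighbors, hence at most $r-1$ infected ones, which is below the threshold $r$.

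Your lower bound, however, has a genuine gap, which you yourself flag. The inductive-hypothesis step gives you at least $|P_{d,r}(t-1)|$ initially uninfected vertices in each ball $B_{t-1}(v)$ for $v$ in a set $V$ of at least $2d-r+1$ neighbors of $0$, but these balls overlap heavily inside $B_t(0)$ and no disjointness or inclusion--exclusion identity is supplied to turn the $|V|$ separate bounds into the single bound $|P_{d,r}(t)|$; a naive sum vastly overcounts and a naive max is too weak. The proposed fallback, a compression/shifting argument, is the right flavor of idea but is only described, not executed: the crucial claim that coordinate-wise shifting of the uninfected set preserves the property $0 \notin A_t$ is exactly the nontrivial dynamical statement, and you acknowledge that "careless shifting can create new $r$-neighbor infections that cascade." Without a precise definition of the shift operator and a verified monotonicity lemma for the bootstrap dynamics under that shift, the lower bound is not established. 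As written, the proof is therefore incomplete on the lower-bound side, and the argument in \cite{Bollobs2012TheTO} would need to be consulted or reconstructed in detail to close it.
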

   
Using this result, let us prove the lower bound for the percolation time. Assume that $d=r$.

\begin{proof}
    
Let $E_1$ be the event that every vertex in $\{x \in B_t(0): x_2,...,x_d \in \{0,1\} \}$ is initially uninfected and $E_2$ be the event that the vertex $(0,0,...,0)$ is uninfected at time $t$.

From Theorem~\ref{theoremV2}, the event $E_1$ implies the event $E_2$. Therefore, the event $E_1$ implies the event $\{T >t \}$.  

Let $E_3$ be the event that every rectangle $[2t+1] \times [2]^{d-1}$ is nonempty. 
It is easy to see that the event $\{T \leq t \}$ implies the event $E_3.$ Let us divide $[n]^d$ into $\frac{n^d}{2^dt}$ disjoint rectangles $[2]^{d-1} \times [2t+1]$. Let $E_4$ be an event that every such disjoint rectangle $[2]^{d-1} \times [2t+1]$ is nonempty. It is easy to see that the event $E_3$ implies the event $E_4$. 

Therefore, we have 
\begin{align*}
   \mathbb{P}(T \leq t) & \leq \left(1-\left(1-p \right)^{2^dt} \right)^{\frac{n^d}{2^dt}}\\
   & \leq \exp \left(-\left(1-p \right)^{2^dt}\frac{n^d}{2^dt} \right)\\
   & = \exp\left(-\exp\left(d\log(n)-\log(2^dt)-2^dt\log\left(\frac{1}{1-p} \right) \right) \right)\\
   & = o(1)
\end{align*}
if $$\frac{2^dt \log(\frac{1}{1-p})}{d\log(n)} \leq 1.$$
Thus, with high probability, 
$$T \geq C(d) \frac{\log (n)}{\log(\frac{1}{1-p})}.\qedhere $$ 
\end{proof}

\section{Upper Bound} \label{Upper bound}

Before proceeding, we introduce some elements of notation and definitions.

Define a subcube $$[(a_1,b_1),(a_2,b_2),...,(a_d,b_d)]:=\{(x_1,x_2,...,x_d) \in [n]^d: a_i \leq x_i \leq b_i  \; \forall i \in [d]\}$$

Define a sub-grid
\begin{align*}
    [(a_1,b_1),(a_2,b_2),...,(a_{j-1},b_{j-1}),(a_j),(a_{j+1},b_{j+1}),...,(a_d,b_d)]:=\\\{(x_1,x_2,...,x_d) \in [n]^d: 
     x_j=a_j, a_i \leq x_i \leq b_i \; \forall i \in [d] \backslash \{j\}\}
\end{align*}

Define a line segment 
\begin{align*}
    [(a_1),(a_2),\cdots,(a_{j-1}),(a_j,b_j),(a_{j+1})...,(a_d)]:=\{(x_1,x_2,...,x_d) \in [n]^d: \\
     x_i=a_i, a_j \leq x_j \leq b_j \; \forall i \in [d] \backslash \{j\}\}
\end{align*}

Define a side of the cube $[m]^d$ as of the form $[(a_1),(a_2),...,(a_{j-1}),(1,m),(a_{j+1}),...,(a_d)]$ with $a_i \in \{1,m \}$ for all $i \in \{[d] \backslash j \}$ for some $j \in [d].$ Note that there are in total $2^{d-1}d$ sides of $[m]^d$.



Let $\mathfrak{S}_d$ be the symmetric group on $d$ coordinates. Then define:
$$\text{Perm}_d(x):=\{\sigma x: \sigma \in \mathfrak{S}_d\},$$
where for $x = [(x_1), \dots, (x_d)]$, we define:
$$\sigma x:=[\left(x_{\sigma(1)}),\cdots,(x_{\sigma(d)} \right)].$$
Note that by abuse of the notation if $x_i$ is an interval for some $i \in [d]$, regarding the permutation it is treated as a single value. 

We will give the definition of interior($[m]^d$) such that it serves our purpose.

Define the interior of $[m]^d$ as 
$$\text{int}([m]^d):=[m]^d \backslash \{\text{Perm}_d [\left((a_1),(a_2),...,(a_{j-1}),(1,m),(a_{j+1}),...,(a_d) \right)]: a_i \in \{1,m\} \; \text{for} \; i \in [d] \backslash \{j\} \}$$ i.e., int($[m]^d$) includes every vertex in $[m]^d$ except for its sides.

Let $A \sim \text{Bin}([m]^d,p)$ be the initial set. Define the cube $D=[m]^d$ to be good if its span contains its interior. Formally, the cube $D$ is good if $\text{int}(D) \subset [D \cap A]$. The cube $D$ is strongly good if it is internally spanned, i.e., $D \subset [D \cap A]$. At last, $D$ is semi-good if it is good but not strongly good, and bad if it is not good. 


We denote by $\eta_{m,r}$ the probability that $[m]^d$ is bad with the infection threshold $r$ and in order to simplify the notation, sometimes we may drop the dependence on $r$ if it is clear on the context.

We need the following theorem by Bollob{\'a}s, Balogh, Duminil-Copin, and Morris \cite{Balogh}.

\begin{theorem} \label{theoremV3} 
   For every $d \geq r \geq 2$, there exists an explicit constant $\lambda(d,r) > 0$ such that the critical probability for $[n]^d$ with infection threshold $r$,denoted by $p_c([n]^d,r)$, can be expressed as follows: 
   $$
   p_c([n]^d,r)=\left[ \frac{\lambda(d,r)+o(1)}{\log^{(r-1)} n} \right]^{d-r+1}.$$
\end{theorem}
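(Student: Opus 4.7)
The plan is to establish matching upper and lower bounds on $p_c([n]^d,r)$, both of the form $\left[\frac{\lambda(d,r)+o(1)}{\log^{(r-1)}n}\right]^{d-r+1}$, and to identify the constant $\lambda(d,r)$ as the value of an explicit variational problem. The natural induction parameter is $d-r$: the base case $d=r$ reduces (after peeling off dimensions) to the classical Holroyd analysis for $(d,r)=(2,2)$, and the general case is bootstrapped from lower-dimensional cross-sections of $[n]^d$ under threshold $r$.

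For the upper bound, I would follow the ``critical droplet growth'' paradigm. First, show that with high probability some small box of side roughly $1/p^{1/(d-r+1)}$ (up to a polylogarithmic factor) is internally spanned somewhere in $[n]^d$; this is essentially a first-moment calculation, using the induction hypothesis on $r$ to estimate the probability that a seed of the correct size becomes internally spanned. Then show that once such a critical droplet exists, it grows layer by layer into a full cuboid covering all of $[n]^d$: each growth step succeeds if a $(d-r+1)$-dimensional ``slab'' adjacent to the current face contains a configuration of infected sites sufficient to extend the droplet by one unit, and the induction hypothesis on $d$ supplies the required probability bound for each such extension. Summing $\log$ probabilities across the geometric sequence of growth scales converts into a Riemann sum for an integral of the form $\int_0^{\infty} \beta(z)\,dz$, whose value is $\lambda(d,r)$.

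For the lower bound, the main tool is Holroyd's hierarchy method, adapted to arbitrary $(d,r)$. Conditioning on the event that a given box percolates, I would associate to the infection process a tree-structured ``hierarchy'' of nested internally spanned sub-cuboids whose root is the ambient box and whose leaves are seeds of size comparable to the critical droplet. A union bound over all admissible hierarchies, weighted by the probability that each internal growth step succeeds, then dominates the probability of percolation. Choosing the hierarchy granularity so that the entropy cost of enumerating hierarchies is dwarfed by the multiplicative probabilistic gain at each scale gives the matching constant.

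The hard part is the sharp determination of $\lambda(d,r)$ and the matching of constants between the two bounds. On the upper bound side, one must ensure that the droplet growth succeeds at the \emph{optimal} sequence of aspect ratios, which is a non-trivial multidimensional variational problem replacing Holroyd's one-dimensional integral. On the lower bound side, the hierarchy argument is delicate because the probability that a specific sub-cuboid is internally spanned must be computed with enough precision to see the correct $(\log^{(r-1)}n)^{d-r+1}$ factor; this in turn forces a careful double induction on both $d$ and $r$, propagating iterated logarithms through the recursion. Getting the leading-order constant to agree on both sides, rather than merely bounding $p_c$ up to a constant factor, is the chief technical obstacle.
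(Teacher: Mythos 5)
This theorem is not proved in the paper at all: it is the main result of Balogh, Bollob\'as, Duminil-Copin, and Morris (\cite{Balogh}), and the paper cites it as a black box with the sentence ``We need the following theorem by Bollob\'as, Balogh, Duminil-Copin, and Morris.'' There is therefore no ``paper's own proof'' to compare against. Your sketch does capture, at a very high level, the actual architecture of the BBDCM argument (double induction on $d$ and $r$, critical-droplet growth with a first-moment seed calculation for the upper bound, a Holroyd-style hierarchy for the lower bound, and a variational characterization of $\lambda(d,r)$). But reproducing that proof is a project of well over a hundred pages, and in the present paper the correct move is precisely what the author does: quote the result and cite it. If you were asked to supply a proof of Theorem~\ref{theoremV3} in this context, the honest answer is to cite \cite{Balogh} rather than reprove it; your outline, while directionally accurate, elides essentially all of the difficulty (the precise form of the variational problem, the spanning-algorithm machinery, the control of entropy in the hierarchy bound, the propagation of iterated logarithms through the induction), so it cannot stand as a self-contained argument.
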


\subsection{$d=r=3$}

Before stating the next lemma let us state one fact which will be used very often in this section. 
\begin{fact} \label{fact}
Assume that $\mathbb{P}(B)\leq a $, $\mathbb{P}(C) \leq b$, $\mathbb{P}(A \cap B)=0$, and  $\mathbb{P}(A \cap C)=0 $. Moreover, assume that the event B and the event C are independent. Then 
$$\mathbb{P}(A) \leq 2ab.$$
\end{fact}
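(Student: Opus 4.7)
The plan is a short argument combining containment with the assumed independence. The two vanishing-intersection hypotheses $\mathbb{P}(A\cap B)=0$ and $\mathbb{P}(A\cap C)=0$ are meant to force $A$, up to a null set, into a product event determined by $B$ and $C$. Once such a containment is in place, independence of $B$ and $C$ converts the probability of that containing event into a product, and the stated bound falls out immediately.

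Concretely, I would proceed in two steps. First, from the two vanishing-intersection hypotheses I would place $A$ almost surely inside an intersection-type event governed by $B$ and $C$; in the direction of containment matching the intended application in later sections (where $A$ arises as the conjunction of two subevents, each controlled by $B$ and by $C$ respectively), this gives $A \subseteq B \cap C$ almost surely. Second, I would apply monotonicity of $\mathbb{P}$ and the independence of $B$ and $C$:
\[
\mathbb{P}(A) \;\leq\; \mathbb{P}(B \cap C) \;=\; \mathbb{P}(B)\,\mathbb{P}(C) \;\leq\; ab \;\leq\; 2ab,
\]
which is the desired inequality. The extra factor of $2$ in the conclusion is deliberate slack: it is convenient in the subsequent applications of the fact, where small lower-order terms or mild failures of exact independence need to be absorbed without affecting the asymptotics.

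I do not expect any real obstacle, as the entire argument is essentially one line once the containment has been identified. The only subtlety is bookkeeping: the two hypotheses must be read in the direction that places $A$ inside both $B$ and $C$ rather than disjoint from them, which is the reading dictated by the structure of the events encountered when this fact is invoked in the upper-bound argument of Section~\ref{Upper bound}. With that reading fixed, both the containment step and the independence step are immediate, and the factor of $2$ in the statement provides a buffer that simplifies later applications.
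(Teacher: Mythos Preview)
Your containment step is backwards. The hypothesis $\mathbb{P}(A\cap B)=0$ says that $A$ and $B$ are almost surely \emph{disjoint}, i.e.\ $A\subseteq B^c$ a.s.; it cannot be ``read in the direction'' $A\subseteq B$. Likewise $\mathbb{P}(A\cap C)=0$ gives $A\subseteq C^c$ a.s. So the correct inclusion is $A\subseteq B^c\cap C^c$, and by independence $\mathbb{P}(A)\le \mathbb{P}(B^c)\mathbb{P}(C^c)$. With the hypotheses as literally stated ($\mathbb{P}(B)\le a$, $\mathbb{P}(C)\le b$) this is useless, and indeed the Fact is false as written: take $B,C$ independent with $\mathbb{P}(B)=\mathbb{P}(C)=0.01$ and $A=B^c\cap C^c$; all hypotheses hold but $\mathbb{P}(A)\approx 0.98 \gg 2ab=0.0002$.

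The paper's own proof reveals the intended hypotheses: its last line equates $2\,\mathbb{P}(B^c\cap C^c)$ with $2ab$, so what is meant is $\mathbb{P}(B^c)\le a$ and $\mathbb{P}(C^c)\le b$ (and this is how the Fact is used later: $B,C$ are ``helpful'' events whose \emph{complements} are rare). Once the statement is corrected, your one-line route
\[
\mathbb{P}(A)\le \mathbb{P}(B^c\cap C^c)=\mathbb{P}(B^c)\,\mathbb{P}(C^c)\le ab
\]
is valid and in fact sharper and cleaner than the paper's argument, which detours through $(B\cap C)^c=B^c\cup C^c$ and a union bound, picking up an unnecessary factor of $2$. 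So: fix the direction of the inclusion, acknowledge the typo in the hypotheses, and your argument stands.
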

\begin{proof}
\begin{align*}
    \mathbb{P}(A) &= \mathbb{P}(A \cap B \cap C) + \mathbb{P}(A \cap \left(B \cap C \right)^c)\\
    & = \mathbb{P}(A \cap \left(B \cap C \right)^c)\\
    & \leq \mathbb{P}(A \cap B^c) +\mathbb{P}(A \cap C^c)\\
    & \leq \mathbb{P}(A \cap B^c \cap C) + \mathbb{P}(A \cap B^c \cap C^c) +  \mathbb{P}(A \cap C^c \cap B) + \mathbb{P}(A \cap C^c \cap B^c)\\
    & \leq 2 \mathbb{P}(B^c \cap C^c)\\
    & = 2ab. \qedhere
\end{align*}

\end{proof}

When $d=r=3$, we use Lemma ~\ref{lemmaV1} to derive a recursive relation on the probability that $[m]^3$ is bad.

\begin{lemma} \label{lemmaV1} 
Let $\eta_{m,3}$ be the probability that $[m]^3$ is bad with the infection threshold $3$. We have 
   $$\eta_{2m,3} \leq C(d)\eta_{m,3}^3+C(d)m^2(1-p)^{4m-8}.$$
\end{lemma}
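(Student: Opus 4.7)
The plan is to partition $[2m]^3$ into $8$ disjoint translates $C_1,\dots,C_8$ of $[m]^3$ and let $B_i$ denote the event that $C_i$ is bad. Since the $C_i$ are vertex-disjoint and the initial infection is a product measure, the $B_i$ are independent with $\mathbb{P}(B_i)=\eta_{m,3}$. A direct union bound handles the case of many bad subcubes:
\[
\mathbb{P}\bigl(\text{at least }3\text{ of the }B_i\text{ occur}\bigr)\;\le\;\binom{8}{3}\eta_{m,3}^{\,3}\;\le\;C(d)\eta_{m,3}^{\,3},
\]
producing the first term. The bulk of the work is then to show
\[
\mathbb{P}\bigl([2m]^3\text{ bad, at most }2\text{ of the }B_i\text{ occur}\bigr)\;\le\;C(d)\,m^{2}(1-p)^{4m-8}.
\]

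To this end, I would first observe that when at least six subcubes are good, the interiors of those six (or more) subcubes are infected in the span process on $[2m]^3$. Any vertex of $\text{int}([2m]^3)$ that remains uninfected must therefore sit either in one of the (at most two) bad subcubes or on the $1$-skeleton of the eight subcubes restricted to $\text{int}([2m]^3)$, i.e.\ on a bounded union of ``problematic'' line segments. Crucially, every problematic vertex that is not itself a subcube corner has two of its six neighbors in the infected interior of a good subcube, so with threshold $r=d=3$ infection propagates along a problematic segment as soon as the segment contains an initial seed or inherits one through a corner shared with an already-infected segment.

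The critical step is a local blocking argument: take an uninfected interior vertex $v$ of $[2m]^3$ and apply Theorem~\ref{theoremV2} inside $[2m]^3$ around $v$. This yields a forbidden empty $P_{3,3}(t)$-shaped set in the $l_1$-ball $B_t(v)$, and combined with the infected interiors of the surrounding good subcubes, this forbidden region must collapse onto the $1$-skeleton; from it one extracts two parallel initially empty line segments running along adjacent subcube edges, together forming a $2\times(2m-4)$ empty rectangular block in some axis direction of $[2m]^3$ and containing $4m-8$ vertices. Fact~\ref{fact} is used to combine the two ``segment empty'' events, which are supported on disjoint vertex sets and hence independent. Since there are $O(m^2)$ possible orientations and two-dimensional positions for such a block in $[2m]^3$, each empty with probability $(1-p)^{4m-8}$, a union bound delivers the claimed $C(d)m^{2}(1-p)^{4m-8}$ estimate.

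The main technical obstacle is the local blocking step. The subcube corners---vertices with three extreme coordinates in their subcube---have no neighbors in the interior of any good subcube, so their infection depends entirely on the three problematic segments meeting at the corner, and carefully tracking these corner couplings is what forces the exponent $4m-8$ rather than the weaker $2m-O(1)$ one would get from the emptiness of a single line. The matching condition $r=d=3$ is essential: it is precisely this matching that makes the Bollob\'as--Smith--Uzzell blocking structure $P_{d,r}(t)$ essentially one-dimensional (a line segment thickened by a $\{0,1\}^{d-1}$ cross-section), producing an exponent of order $4m$ in dimension three.
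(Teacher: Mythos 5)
Your high-level decomposition (eight subcubes, trivial union bound when $\ge 3$ are bad, delicate analysis when $\le 2$ are bad) and the final tally of $O(m^2)$ positions $\times\;(1-p)^{4m-8}$ match the paper's structure, but the central step you propose has a genuine gap. You invoke Theorem~\ref{theoremV2} to claim that an uninfected interior vertex $v$ ``yields a forbidden empty $P_{3,3}(t)$-shaped set'' which ``must collapse onto the $1$-skeleton'' and from which one ``extracts two parallel initially empty line segments.'' Theorem~\ref{theoremV2} does not deliver any of this: it is a purely extremal cardinality statement, asserting that if $0\notin A_t$ then $|B_t(0)\setminus A_0|\ge |P_{3,3}(t)|$. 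It does not say the uninfected vertices form a $P_{3,3}(t)$-shaped set (the minimizer is not unique), it is stated for the full torus/lattice rather than for a finite cube where boundary effects change the count, and it concerns the event ``$0$ still uninfected at time $t$'' rather than ``$0$ never infected in $[2m]^3$,'' which is what ``$[2m]^3$ is bad'' requires. The further claim that the forbidden set is forced to align into a single $2\times(2m-4)$ rectangular block is exactly the hard combinatorial content of the lemma, and your sketch supplies no mechanism to rule out scattered blocking configurations on the skeleton or configurations that exploit the (up to two) bad subcubes and the corner vertices.

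The paper's proof is constructive and runs in the opposite direction: it defines an explicit family of ``segment non-empty'' events $B_1,\dots,B_7$ (and later $B_8,B_9,B_{10}$, $D(y=1)$, $H'$, $E_4$--$E_8$, etc.), proves by direct infection-spread arguments that when these events and $A_1$ or $A_2$ hold then every side vertex of the $C_i$ becomes infected so $[2m]^3$ is good, and then bounds $\mathbb{P}(E^c\cap A_1)$, $\mathbb{P}(E^c\cap A_2)$ by the probability that at least one of those auxiliary events fails, using Fact~\ref{fact} to multiply two independent ``segment empty'' probabilities. That is what produces $(1-p)^{4m-O(1)}$ with a polynomial prefactor. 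To repair your approach you would need a genuinely new rigidity theorem for blocking sets inside a cube, which is not available from Theorem~\ref{theoremV2}; absent that, the explicit spreading construction in the paper is the natural route.
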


The intuition behind the proof of Lemma ~\ref{lemmaV1} is as follows. We begin by partitioning the cube $[2m]^3$ into eight disjoint subcubes and break the proof into several subcases according to the number of bad subcubes in this 8-tuple. Specifically, we analyze the cases where exactly one or two of the eight subcubes are bad as well as the case where none are bad.  The main technical part of the proof is devoted to the analysis to these three subcases, where we study the interaction arising among the 8 subcubes. Further the probability of having at least three bad subcubes can be estimated by a very rough upper bound of $C\eta_{m}^3$, as established in Lemma ~\ref{lemmaV1}.

Before proving this lemma we need to introduce notation and prove several auxiliary lemmas. 

Let us partition the cube $[2m]^3$ into 8 disjoint subcubes as shown in Figure ~\ref{figure1}.

$$C_1=[(1,m),(1,m),(1,m)], C_2=[(m+1,2m),(1,m),(1,m)],$$ 
$$C_3=[(1,m),(m+1,2m),(1,m)],C_4=[(m+1,2m),(m+1,2m),(1,m)],$$ 
$$C_5=[(1,m),(1,m),(m+1,2m)], C_6=[(m+1,2m),(1,m),(m+1,2m)],$$ 
$$C_7=[(1,m),(m+1,2m),(m+1,2m)],C_8=[(m+1,2m),(m+1,2m),(m+1,2m)].$$
\begin{figure}[htbp]
\centering
\begin{tikzpicture}
		[cube/.style={very thick,black},
			grid/.style={very thin,gray},
			axis/.style={->,blue,thick}]
   \draw[axis] (0,0,0) -- (6,0,0) node[anchor=west]{$x$};
	\draw[axis] (0,0,0) -- (0,6,0) node[anchor=west]{$z$};
	\draw[axis] (0,0,0) -- (0,0,6) node[anchor=west]{$y$};

    \draw[cube] (0,2.5,0) -- (0,4.5,0) -- (2,4.5,0) -- (2,2.5,0) -- cycle;
    \draw[cube] (0,2.5,2) -- (0,4.5,2) -- (2,4.5,2) -- (2,2.5,2) -- cycle;
        \draw[cube] (0,2.5,0) -- (0,2.5,2);
	\draw[cube] (0,4.5,0) -- (0,4.5,2);
	\draw[cube] (2,2.5,0) -- (2,2.5,2);
	\draw[cube] (2,4.5,0) -- (2,4.5,2);

  \draw[cube] (0,2.5,2.5) -- (0,4.5,2.5) -- (2,4.5,2.5) -- (2,2.5,2.5) -- cycle;
   \draw[cube] (0,2.5,4.5) -- (0,4.5,4.5) -- (2,4.5,4.5) -- (2,2.5,4.5) -- cycle;
         \draw[cube] (0,2.5,2.5) -- (0,2.5,4.5);
	\draw[cube] (0,4.5,2.5) -- (0,4.5,4.5);
	\draw[cube] (2,2.5,2.5) -- (2,2.5,4.5);
	\draw[cube] (2,4.5,2.5) -- (2,4.5,4.5);

 \draw[cube] (2.5,2.5,0) -- (2.5,4.5,0) -- (4.5,4.5,0) -- (4.5,2.5,0) -- cycle;
    \draw[cube] (2.5,2.5,2) -- (2.5,4.5,2) -- (4.5,4.5,2) -- (4.5,2.5,2) -- cycle;
        \draw[cube] (2.5,2.5,0) -- (2.5,2.5,2);
	\draw[cube] (2.5,4.5,0) -- (2.5,4.5,2);
	\draw[cube] (4.5,2.5,0) -- (4.5,2.5,2);
	\draw[cube] (4.5,4.5,0) -- (4.5,4.5,2);

 \draw[cube] (2.5,2.5,2.5) -- (2.5,4.5,2.5) -- (4.5,4.5,2.5) -- (4.5,2.5,2.5) -- cycle;
    \draw[cube] (2.5,2.5,4.5) -- (2.5,4.5,4.5) -- (4.5,4.5,4.5) -- (4.5,2.5,4.5) -- cycle;
        \draw[cube] (2.5,2.5,2.5) -- (2.5,2.5,4.5);
	\draw[cube] (2.5,4.5,2.5) -- (2.5,4.5,4.5);
	\draw[cube] (4.5,2.5,2.5) -- (4.5,2.5,4.5);
	\draw[cube] (4.5,4.5,2.5) -- (4.5,4.5,4.5);
	\draw[cube] (0,0,0) -- (0,2,0) -- (2,2,0) -- (2,0,0) -- cycle;
	\draw[cube] (0,0,2) -- (0,2,2) -- (2,2,2) -- (2,0,2) -- cycle;
 
	\draw[cube] (2.5,0,0) -- (2.5,2,0) -- (4.5,2,0) -- (4.5,0,0) -- cycle;
	\draw[cube] (2.5,0,2) -- (2.5,2,2) -- (4.5,2,2) -- (4.5,0,2) -- cycle;


       \draw[cube] (0,0,2.5) -- (0,2,2.5) -- (2,2,2.5) -- (2,0,2.5) -- cycle;
	\draw[cube] (0,0,4.5) -- (0,2,4.5) -- (2,2,4.5) -- (2,0,4.5) -- cycle;


        \draw[cube] (2.5,0,2.5) -- (2.5,2,2.5) -- (4.5,2,2.5) -- (4.5,0,2.5) -- cycle;
	\draw[cube] (2.5,0,4.5) -- (2.5,2,4.5) -- (4.5,2,4.5) -- (4.5,0,4.5) -- cycle;
 
	\draw[cube] (0,0,0) -- (0,0,2);
	\draw[cube] (0,2,0) -- (0,2,2);
	\draw[cube] (2,0,0) -- (2,0,2);
	\draw[cube] (2,2,0) -- (2,2,2);
 
    \draw[cube] (2.5,0,0) -- (2.5,0,2);
	\draw[cube] (2.5,2,0) -- (2.5,2,2);
	\draw[cube] (4.5,0,0) -- (4.5,0,2);
	\draw[cube] (4.5,2,0) -- (4.5,2,2);


        \draw[cube] (0,0,2.5) -- (0,0,4.5);
	\draw[cube] (0,2,2.5) -- (0,2,4.5);
	\draw[cube] (2,0,2.5) -- (2,0,4.5);
	\draw[cube] (2,2,2.5) -- (2,2,4.5);


\draw[cube] (2.5,0,2.5) -- (2.5,0,4.5);
	\draw[cube] (2.5,2,2.5) -- (2.5,2,4.5);
	\draw[cube] (4.5,0,2.5) -- (4.5,0,4.5);
	\draw[cube] (4.5,2,2.5) -- (4.5,2,4.5);

\draw[cube] (10,2,2.5) -- (10,2,4.5);
\draw[cube] (10,2,2.5)-- (12,2,2.5);
\draw[cube] (12,2,2.5)-- (12,2,4.5);
\draw[cube] (10,2,4.5)-- (12,2,4.5);

\draw[cube] (10,4,2.5) -- (10,4,4.5);
\draw[cube] (10,4,2.5)-- (12,4,2.5);
\draw[cube] (12,4,2.5)-- (12,4,4.5);
\draw[cube] (10,4,4.5)-- (12,4,4.5);
  \coordinate[label=left:$C_1$]  (C1)  at (10,2,2.5);
  \coordinate[label=left:$C_3$] (C3)  at (10,2,4.5);
  \coordinate[label=right:$C_2$]  (C2)  at (12,2,2.5);
  \coordinate[label=right:$C_4$] (C4)  at (12,2,4.5);

  \coordinate[label=left:$C_5$]  (C5)  at (10,4,2.5);
  \coordinate[label=left:$C_7$] (C7)  at (10,4,4.5);
  \coordinate[label=right:$C_6$]  (C6)  at (12,4,2.5);
  \coordinate[label=right:$C_8$] (C8)  at (12,4,4.5);

\end{tikzpicture}

\caption{Subcubes $C_1,C_2,\cdots,C_8$}
\label{figure1}
\end{figure}
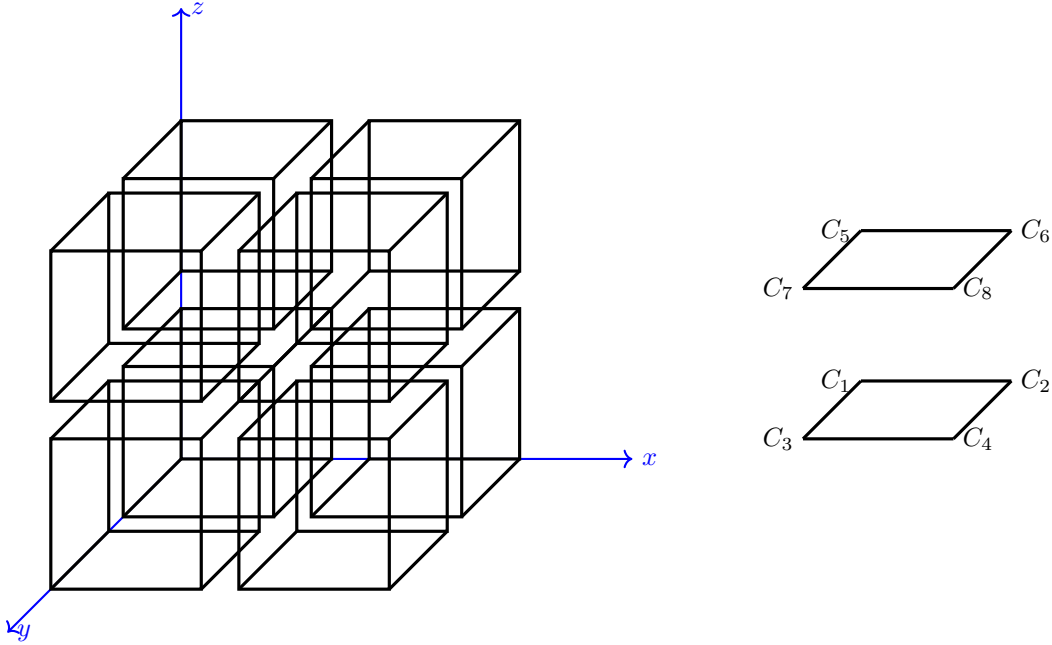

We will first define a few events. 
\begin{itemize}
    
\item $E:=\{[2m]^3$ is good \}.

\item $A$:=\{all cubes $C_1,C_2,\cdots, C_8$ are good\}.

\item $A_i$:=\{exactly $i$ subcubes among $C_1$,$C_2$,...,$C_8$ are bad\} for $i \in \{1,\dots,8\}$.
\end{itemize}

Note that 
\begin{equation} \label{eq: P}
    \mathbb{P}(E^c) = \mathbb{P}(E^c \cap A)+\mathbb{P}(E^c \cap A^c).
\end{equation}

Let us consider the term $\mathbb{P}(E^c \cap A)$ in (\ref{eq: P}) and this is the case where all 8 subcubes $C_1,\dots,C_8$ are good. Since the sides of these 8 subcubes may still remain uninfected the event $A$ does not necessarily imply that the cube $[2m]^d$ is good. Therefore, we have to analyze the interaction between the events that some vertices in the sides of $C_1,\dots,C_8$ are initially infected and the event $A$. Moreover, in order for our method to work the probability that the complement of the events that some vertices in the edges of $C_1,\dots,C_8$ are initially infected needs to be small and in fact it needs to be $\leq (1-p)^{4m}$. Let us define these events precisely. 

Let $B(m,(1,2m),a_3)$ for $a_3 \in \{1,m+1,2m\}$ denote the event that at least one of the following 2 line segments is non-empty:
\begin{align*}
[(m),(1,2m),(a_3)],\text{or} \quad [(m+1),(1,2m),(a_3)].
\end{align*}

Note that the above 2 line segments belong to the sides of $C_5,\dots,C_8$ if $a_3=2m$, and belong to the sides of $C_1,\dots,C_4$ if $a_3=1$.


$B(\pi (m),\pi((1,2m)),\pi (a_3))$ are defined similarly for all $\pi$ where $\pi$ is an ordering of the set $\{m,(1,2m),a_3\}$.


Let $B(m,m,(c_1,c_2))$ denote the event that at least one of the following 4 line segments is nonempty:
\begin{align*}
& \{[(m),(m),(c_1,c_2)],[(m),(m+1),(c_1,c_2)],\\
& [(m+1),(m),(c_1,c_2)],[(m+1),(m+1),(c_1,c_2)] \}.
\end{align*}

$B(\pi (m),\pi(m),\pi ((c_1,c_2)))$ are defined similarly for all $\pi$ where $\pi$ is an ordering of the set $\{m,(m),(c_1,c_2)\}$.

Let us define an event $B$ such that if the events $A$ and $B$ occur then the event $E$ occurs and $\mathbb{P}(B^c) \leq C (1-p)^{4m}$ with $C>0$. 

Let 
$$B_1=\cap_{a_3 \in \{2m,m+1,m,1\}}B(m,(1,2m),a_3),$$ 
$$B_2=\cap_{a_3 \in \{2m,m+1,m,1\}}B((1,2m),m,a_3),$$
$$B_3=B(m,m,(m+1,2m) \cap B(m,m,(1,m)),$$
$$B_4=B(m,1,(1,2m)) \cap B((1,2m),1,m),$$
$$B_5=B(2m,m,(1,2m)) \cap B(2m,(1,2m),m),$$
$$B_6=B((1,2m),2m,m) \cap B(m,2m,(1,2m)),$$
$$B_7=B(1,m,(1,2m)) \cap B(1,(1,2m),m).$$

Let $B=\cap_{i \in [7]}B_i$. 
Note that 
\begin{equation} \label{eq: Pp}
    \mathbb{P}(E^c) = \mathbb{P}(E^c \cap A \cap B)+\mathbb{P}(E^c \cap A^c \cap B)+\mathbb{P}(E^c \cap A \cap B^c)+\mathbb{P}(E^c \cap A^c \cap B^c).
\end{equation}

Therefore we will try to bound the 4 terms on the right side of (\ref{eq: Pp}). If we can show that $A \cap B= E$, then the first term on the right side of (\ref{eq: Pp}) vanishes. The next lemma shows this. 

\begin{lemma} \label{lemma: A B}
If the events $A$ and $B$ occur, then the event $E$ occurs, i.e., $[2m]^3$ is good. 
\end{lemma}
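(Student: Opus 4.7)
The plan is to show that under events $A$ and $B$, the bootstrap process on $[2m]^3$ (with $r=3$) infects all of $\text{int}([2m]^3)$. Event $A$ gives that each $C_i$ is good, so $\text{int}(C_i) \subset [C_i \cap A_0]$ under bootstrap restricted to $C_i$; hence these eight interiors are infected once bootstrap is run on the full $[2m]^3$. What remains is the ``scaffold'' $S := \text{int}([2m]^3) \setminus \bigcup_{i=1}^{8}\text{int}(C_i)$, which consists of the vertices lying on some side of some $C_i$ while still being in $\text{int}([2m]^3)$. Concretely, $S$ is the set of vertices with at least two coordinates in $\{1,m,m+1,2m\}$ and at most one coordinate in $\{1,2m\}$, and it decomposes as a union of axis-parallel line segments, which I will call \emph{scaffold lines}.

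The central propagation principle I would establish is the following. Suppose $v$ is a scaffold vertex lying on a scaffold line $L$ parallel to the $x_j$-axis, and $v$ is not a junction where another scaffold line meets $L$. Then exactly two of the four neighbors of $v$ perpendicular to $L$ lie in the interior of an adjacent subcube and are therefore already infected; the other two lie on the parallel scaffold lines of the same 4-tuple. Consequently, once a single vertex of $L$ is infected, the infection propagates to all of $L$: each new vertex acquires two infected neighbors from subcube interiors plus one from the previously infected vertex on $L$, giving $3=r$ infected neighbors. The same count shows that once one line in a 4-tuple of parallel scaffold lines (e.g.\ the four $x_3$-parallel lines with $(x_1,x_2)\in\{m,m+1\}^2$ and $x_3\in[1,m]$) is filled, the three adjacent parallel lines are filled as well, because then a vertex on an adjacent parallel line sees two interior neighbors plus one on the already-filled parallel line.

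I would then read off, from the events $B_1,\ldots,B_7$, which scaffold 4-tuples are guaranteed to contain an initial infection: $B_3$ covers the two inner $x_3$-parallel 4-tuples (one in each of the halves $x_3\in[1,m]$ and $x_3\in[m+1,2m]$); $B_1$ and $B_2$ handle the $x_2$- and $x_1$-parallel scaffold 4-tuples at the four cross-section levels $a_3\in\{1,m,m+1,2m\}$, which in particular covers the two end faces $x_3=1$ and $x_3=2m$; and $B_4,B_5,B_6,B_7$ supply scaffold lines on the four lateral boundary faces $x_2=1,\,x_1=2m,\,x_2=2m,\,x_1=1$. Combined with the propagation principle, this fills every scaffold line.

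The main obstacle is the bookkeeping for the \emph{junction} vertices --- the eight central vertices in $\{m,m+1\}^3$ together with the face-corner scaffold vertices such as $(1,m,m)$ --- since these have all of their neighbors in $S$, and only become infected once three of the scaffold lines through them have been filled. The argument must therefore be carried out in the right order: first propagate along each line carrying an initial infection, then sweep out each parallel 4-tuple, and only afterwards use the filled scaffold to infect the junctions. Because each $B_i$ guarantees only that at least one of several parallel lines is non-empty, a small case analysis is needed to verify that, regardless of which particular line carries the initial infection, the propagation order can be arranged so that every junction eventually acquires three infected neighbors.
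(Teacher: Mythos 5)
Your proposal follows essentially the same route as the paper: use $A$ to infect every $\text{int}(C_i)$, then observe that a non-junction scaffold vertex sees two already-infected interior neighbours, so a single seed on a scaffold line (or its parallel companions) suffices to fill it, and let the line-segment events in $B$ supply those seeds. Your presentation is cleaner — isolating the scaffold $S$ and stating the propagation principle explicitly, where the paper traces specific lines such as $[(m),(2,m-1),(2m)]$ step by step — but it is the same mechanism at the same level of rigor, and both you and the paper leave the final junction bookkeeping ("the rest can be handled in a very similar way") as a sketch.
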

\begin{proof}
    Note that since all 8 subcubes $C_1,C_2,...,C_8$ are good, the interior of $C_i$ has been infected after $m^3$ steps for all $i \in [8]$. Thus after $m^3$ steps the only possibly uninfected vertices are the vertices on the sides for each cube $C_i$. Without loss of generality, since the event $B(m,(1,2m),2m)$ occurs, we can assume that there is a vertex $a=(a_1,a_2,a_3)$ in $[(m),(1,m),(2m)]$ is initially infected.

    Let us consider the vertices in $[(1,2m),(1,2m),(2m)]$ after $m^3$ steps. Since all 8 subcubes $C_1,C_2,\cdots,C_8$ are good, every vertex in $[(m),(2,m-1),2m]$ has at least 2 infected neighbors after $m^3$ steps. Since the vertex $a$ is initially infected, it takes at most $m^3+m$ steps to infect every vertex in $[(m),(2,m-1),2m]$. 
    
    Since the vertex $a$ is initially infected and all 8 subcubes are good, the vertex $y=(a_1+1,a_2,a_3)$ has 3 infected neighbors after $m^3$ so it will be infected in the next step and then it takes at most $m^3+m$ steps to infect every vertex in $[(m+1),(2,m-1),2m]$.

    Note that every vertex in $$[(m),(m),(m+2,2m-1)],$$ $$[(m+1),(m),(m+2,2m-1)],$$ $$[(m),(m+1),(m+2,2m-1)]$$ and $$[(m+1),(m+1),(m+2,2m-1)]$$ already has 2 infected neighbors since all 8 small cubes are good. Therefore since the event $B(m,m,(m+1,2m))$ occurs, it takes at most $m^3+m$ steps to infect every vertex in $$[(m),(m),(m+2,2m-1)],$$ $$[(m+1),(m),(m+2,2m-1)],$$ $$[(m),(m+1),(m+2,2m-1)],$$ and $$[(m+1),(m+1),(m+2,2m-1)].$$ Then note that after $m^3+m$ steps, the vertices $(m,m,2m)$ and $(m+1,m,2m)$ have at least two infected neighbors and the vertices $(m,m+1,2m)$ and $(m+1,m+1,2m)$ have at least one infected neighbor.  
    
    Since the event $B((1,2m),m,2m)$ occurs, a vertex $b$ in $$[(1,m),(m),(2m)],$$ $$[(1,m),(m+1),(2m)],$$ $$[(m+1,2m),(m),(2m)],$$ or $$[(m+1,2m),(m+1),(2m)]$$ is initially infected. Then following the same logic, every vertex in $[(2,m-1),(m),2m]$ and $[(2,m-1),(m+1),(2m)]$ will be infected after at most $m^3+m$ steps. Thus the vertex $(m,m,2m)$ has 3 infected neighbors after at most $m^3+m$ steps and will be infected in the next step and after that the vertices $(m,m+1,2m)$ and $(m+1,m,2m)$ will be infected. Since the vertex $(m+1,m,2m)$ has been infected after at most $m^3+m$ steps it is easy to see that every vertex in  $[(m+1,2m-1),(m),(2m)] $ will be infected after at most $m^3+2m$ steps. Then every vertex in $[(m+1,2m-1),(m+1),(2m)]$ has 3 infected neighbors after at most $m^3+2m$ steps and will be infected in the next step. Similarly, every vertex in $$[(m),(m+1,2m-1),(2m),] \quad [(m+1),(m+1,2m-1),(2m)$$ will be infected after at most $m^3+3m$ steps.

    The rest of the uninfected vertices on the edges of $C_1,\dots,C_8$ can be handled in a very similar way and so we have the desired result. \qedhere
    
    \end{proof}
Using Lemma \ref{lemma: A B}, we have 
\begin{align*}
  \mathbb{P}(E^c) &= \mathbb{P}(E^c \cap A \cap B)+\mathbb{P}(E^c \cap A^c \cap B)+\mathbb{P}(E^c \cap A \cap B^c)+\mathbb{P}(E^c \cap A^c \cap B^c) \\
  &\leq \mathbb{P}(E^c \cap A^c\cap B)+2 \mathbb{P}(B^c)\\
  & = 2 \mathbb{P}(B^c) + \mathbb{P}(\cup_{i=1}^8(E^c \cap A_i\cap B)) \\
  & \leq C' (1-p)^{4m}+\mathbb{P}(E^c \cap A_1 \cap B) + \mathbb{P}(E^c \cap A_2 \cap B) + \sum_{i=3}^8 \mathbb{P}(A_i) \\
  & \leq C' (1-p)^{4m}+\mathbb{P}(E^c \cap A_1 \cap B) + \mathbb{P}(E^c \cap A_2 \cap B) + \sum_{i=3}^8 \binom{8}{i} \eta_{m,3}^i \\
  & \leq C' (1-p)^{4m}+\mathbb{P}(E^c \cap A_1 \cap B) + \mathbb{P}(E^c \cap A_2 \cap B)+ C \eta_{m,3}^3
\end{align*}
where $C>0$ and $C'>0$ are absolute constants.
 Now let us estimate the term $\mathbb{P}(E^c \cap A_1 \cap B)$. 
 
Before proceeding, let us define a few events. 
$$B_8=\cap_{a_2 \in \{2m,m+1\}}B(m,a_2,(1,2m)) \cap B((1,2m),a_2,m) \cap B(m,(m+1,2m),m),$$
$$B_9=\cap_{a_1 \in \{2m,m+1\}}B(a_1,m,(1,2m)) \cap B(a_1,(1,2m),m) \cap B((m+1,2m),m,m),$$
$$B_{10}=\cap_{a_3 \in \{2m,m+1\}}B(m,(1,2m),a_3) \cap B((1,2m),m,a_3) \cap B(m,m,(m+1,2m)).$$

\begin{lemma} \label{lemma: E A1 B}
The probability that $[2m]^3$ is bad along with the occurrence of the event that there is exactly one bad subcube is correlated with double empty line segments of length $2m$. More precisely, we have 
 $$\mathbb{P}(E^c \cap A_1) \leq C (1-p)^{4m}.$$
\end{lemma}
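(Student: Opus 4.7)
The target $(1-p)^{4m}$ matches the probability of two empty line segments of length $2m$, so the plan is to identify such a pair as the structural obstruction on the event $E^c\cap A_1$. The argument parallels Lemma~\ref{lemma: A B} but must accommodate the presence of one bad subcube. By the natural symmetry permuting the eight subcubes, it suffices to bound $8$ times $\mathbb{P}(E^c\cap A_1\cap \{C_1 \text{ is bad}\})$, so throughout the plan I fix $C_1$ to be the unique bad subcube and $C_2,\dots,C_8$ to be good.

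Next, I would introduce an auxiliary event $\widetilde{B}$ defined as the conjunction of all the double-line-segment nonemptiness events appearing in $B_2, B_3, B_5, B_6$ and in the newly-introduced $B_8, B_9, B_{10}$, restricted to those line segments that do not intersect $C_1$. A direct union bound over the constantly many pairs of line segments of length $2m$ contained in these events gives $\mathbb{P}(\widetilde{B}^c)\leq C(1-p)^{4m}$. Since $\widetilde{B}$ depends only on initial infections in $C_2\cup\cdots\cup C_8$, it is independent of the event $\{C_1 \text{ is bad}\}$.

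The key structural step is an analog of Lemma~\ref{lemma: A B}: on the event that $C_2,\dots,C_8$ are all good and $\widetilde{B}$ holds, I would show that the infection emanating from the seven good subcubes and the nonempty line segments in $\widetilde{B}$ propagates, after $O(m^3)$ steps, to infect every vertex of $\text{int}([2m]^3)\setminus \text{int}(C_1)$ as well as the three faces $F_{12}$, $F_{13}$, $F_{15}$ of $C_1$ adjacent to $C_2, C_3, C_5$. Consequently, $[2m]^3$ is good if and only if the bootstrap process on $C_1$, starting from $A_0\cap C_1$ augmented by the three pre-infected faces $F_{12}\cup F_{13}\cup F_{15}$, spans the rest of $\text{int}(C_1)$. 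Let $G$ denote the complementary event that this modified process on $C_1$ fails to span $\text{int}(C_1)$. Note that $G$ depends only on $A_0\cap C_1$ and in particular implies $\{C_1 \text{ is bad}\}$. Combining the structural claim with independence I then obtain
\begin{align*}
\mathbb{P}(E^c\cap A_1\cap \{C_1 \text{ bad}\})
&\leq \mathbb{P}(G)+\mathbb{P}(\{C_1 \text{ bad}\}\cap \widetilde{B}^c)\\
&\leq \mathbb{P}(G)+\eta_{m,3}\cdot C(1-p)^{4m}\\
&\leq \mathbb{P}(G)+C(1-p)^{4m},
\end{align*}
so the lemma reduces to the bound $\mathbb{P}(G)\leq C(1-p)^{4m}$.

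To bound $\mathbb{P}(G)$, I would invoke a version of Theorem~\ref{theoremV2} adapted to the modified boundary condition: the minimum set of initially uninfected vertices in $C_1$ that keeps some vertex of $\text{int}(C_1)$ uninfected in the process with $F_{12}\cup F_{13}\cup F_{15}$ pre-infected is an axis-aligned $[2\ell-1]\times[2]\times[2]$-like rectangle of total size $\sim 4m$ lying in the corner of $C_1$ opposite to the pre-infected faces. Union-bounding over the $O(m^2)$ positions and three orientations of such a rectangle yields $\mathbb{P}(G)\leq C(1-p)^{4m}$, after absorbing the polynomial factor into the exponential. The main obstacle in the plan is precisely this last adaptation of Theorem~\ref{theoremV2}: one must verify that having three pre-infected adjacent faces does not allow a much smaller obstruction, and simultaneously check that the structural claim above really identifies $F_{12}\cup F_{13}\cup F_{15}$ as the ``pre-infected'' set without any subtle circular dependencies in the propagation order of Lemma~\ref{lemma: A B}'s proof.
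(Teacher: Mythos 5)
Your proposal takes a genuinely different route from the paper's: the paper constructs explicit events ($S_{\text{start}}$ with $B_8,B_9,B_{10}$, then $D(y=1)$ with its refinements $D_1,D_2,D_3,F_1,F_2,F_3,H$) whose occurrence forces $E$, and bounds their failures via Fact~\ref{fact} after carefully nesting the case analysis so that every residual event has probability $\le (1-p)^{4m}$; you instead try to decouple $C_1$ from the rest via a boundary-value bootstrap problem on $C_1$ with pre-infected faces, and then appeal to an extremal-obstruction argument. This decoupling is appealing and would be conceptually cleaner, but there are two genuine gaps.

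First, the structural claim --- that on $A_1\cap\{C_1 \text{ bad}\}\cap\widetilde{B}$ the infection ``emanating from the seven good subcubes'' alone reaches every vertex of $\text{int}([2m]^3)\setminus\text{int}(C_1)$ and all three faces $F_{12},F_{13},F_{15}$ --- is unlikely to hold as stated. The set $\text{int}([2m]^3)\setminus\text{int}(C_1)$ includes, e.g., the portion $[(2,m-1),(1),(2,m-1)]$ of the boundary face $[(1,2m),(1),(1,2m)]$ lying inside $C_1$; infecting a vertex $(a_1,1,a_3)$ there with threshold $3$ requires help either from $(a_1,2,a_3)\in\text{int}(C_1)$ (which you explicitly do not control) or from its in-face neighbours. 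Restricted to that boundary face, the process with one automatic helper per vertex reduces to a $2$-neighbour bootstrap on a $2$-dimensional grid, whose success is not implied by your $\widetilde{B}$ (you have discarded the segments intersecting $C_1$, and indeed the paper's $B_1,B_4,B_7$ and the $D(y=1)$-type events exist precisely to control this). The same difficulty appears for $F_{12},F_{13},F_{15}$: a typical vertex $(m,a_2,a_3)\in F_{12}$ has only one guaranteed outside neighbour (from $C_2$), so making the whole face infected from outside is again a $2$-neighbour bootstrap on a $2$D face, not something you get for free. Either the structural claim must be weakened (making $G$ larger and harder to bound), or $\widetilde{B}$ must be substantially enlarged --- but enlarging it with segments through $C_1$ would break the independence you rely on.

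Second, the bound $\mathbb{P}(G)\le C(1-p)^{4m}$ via ``a version of Theorem~\ref{theoremV2} adapted to the modified boundary condition'' is not available in the paper and is nontrivial to establish. Theorem~\ref{theoremV2} is an extremal statement for the unconstrained $d$-neighbour process on a ball, and it is not immediate that pre-infecting three mutually adjacent faces of $[m]^3$ (while leaving the opposite three free faces of $C_1$ with their reduced degree) produces a minimal obstruction of size $\sim 4m$ for spanning $\text{int}(C_1)$. You would have to rule out smaller obstructions near the corner $(1,1,1)$ and near the three free faces; the analysis is delicate because the free faces of $C_1$ meet the free boundary of $[2m]^3$, so vertices there have fewer than $2d$ neighbours. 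You flag this as the ``main obstacle,'' and I agree that without this extremal lemma the proposal does not yield the stated bound. The paper sidesteps both issues by never isolating a clean sub-process on $C_1$; instead it tracks the infection front explicitly and wraps each failure case in Fact~\ref{fact}.
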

\begin{proof}



Due to symmetry, without loss of generality, we may assume that the subcube $C_1$ is bad. The main idea of the proof is as follows. We will construct 4 set of events $S_{start},S_{x=1},S_{y=1}$ and $S_{z=1}$ such that, if the events in these 4 sets occur along with events $A_1$ and $B$, then the event $E$ also occurs; that is, the cube $[2m]^d$ is good. Moreover, the probability that some of these events in these 4 sets fails to occur is approximately $(1-p)^{4m}$. If the probability that any of the events in these four sets fails to occur is significantly larger than $(1-p)^{4m}$, then a more detailed analysis is required. The rough idea is that other events, not contained in $S_{\text{start}}$, $S_{x=1}$, $S_{y=1}$, or $S_{z=1}$, can also lead to percolation. The probability of the complement of these additional events, together with the complement of the events in $S_{\text{start}}$, $S_{x=1}$, $S_{y=1}$, and $S_{z=1}$, is  $\leq (1-p)^{4m}$. The exact details are presented below.

\vspace{2mm}
 The infection spreads as follows. First, after at most $m^3$ steps, the interior of $C_2,\dots,C_8$ have been infected since $C_2,\dots, C_8$ are good. Then the sides of the subcubes $C_2, \dots, C_8$ become infected (except for those on $[(1,2m),(1),(1,2m)]$, $[(1),(1,2m),(1,2m)]$ and $[(1,2m),(1,2m),(1)]$ and those belonging to the sides of $[2m]^3$ )after at most $m^3+Cm$ steps, due to the occurrence of the events in $S_{start}$. Then, after at most $2m^3+Cm$ steps the interior of $C_1$ have been infected. Finally, the uninfected vertices on $[(1,2m),(1),(1,2m)]$, $[(1),(1,2m),(1,2m)]$ and $[(1,2m),(1,2m),(1)]$ will be infected after at  ost $2m^3+Cm'$ steps, due to the occurrence of the event in $S_{x=1},S_{y=1}$ and $S_{z=1}$.

Now we will start defining the events in the set $S_{start}$. It turns out that the following $B_8,B_9$, and $B_{10}$ serve our purpose.

\vspace{2mm}

Now we can focus on analyzing how to infect the vertices on $[(1,2m),(1),(1,2m)], [1,(1,2m),(1,2m)]$, and $[(1,2m),(1,2m),(1)].$ Due to symmetry we only need to consider infecting vertices on $[(1,2m),(1),(1,2m)]$.

Consider the vertices in the region $[(1,2m),(1),(1,2m)]$ as shown in Figure~\ref{figure5}, where the vertices in the shaded area are infected, and those in the white area remain uninfected. Note that after $2m^3+Cm$ steps among those vertices the only possible uninfected vertices are on $[(1,m),(1),(1,m)]$ and the sides of $C_2,C_5, C_6$.  

\begin{figure}[htbp]
\centering
\begin{tikzpicture}
		[cube/.style={very thick,black},
			grid/.style={very thin,gray},
			axis/.style={->,blue,thick}]

			
	\draw[axis] (0,0,0) -- (5,0,0) node[anchor=west]{$x$};
	\draw[axis] (0,0,0) -- (0,5,0) node[anchor=west]{$z$};
	\draw[axis] (0,0,0) -- (0,0,5) node[anchor=west]{$y$};

	\draw[cube] (0,0,0) -- (0,2,0) -- (2,2,0) -- (2,0,0) -- cycle;
	\draw[cube] (0,2.5,0) -- (0,4.5,0) -- (2,4.5,0) -- (2,2.5,0) -- cycle;
       \draw[cube] (2.5,0,0) -- (2.5,2,0) -- (4.5,2,0) -- (4.5,0,0) -- cycle;
       \draw[cube] (2.5,2.5,0) -- (2.5,4.5,0) -- (4.5,4.5,0) -- (4.5,2.5,0) -- cycle;

  \fill[gray] (0,2.5,0) -- (0,4.5,0) -- (2,4.5,0) -- (2,2.5,0) -- cycle;
  \fill[gray] (2.5,0,0) -- (2.5,2,0) -- (4.5,2,0) -- (4.5,0,0) -- cycle;
  \fill[gray] (2.5,2.5,0) -- (2.5,4.5,0) -- (4.5,4.5,0) -- (4.5,2.5,0) -- cycle;
  
\end{tikzpicture}
\caption{Configuration on $[(1,2m),(1),(1,2m)]$ after $2m^3+Cm$ steps.} 
\label{figure5}
\end{figure}
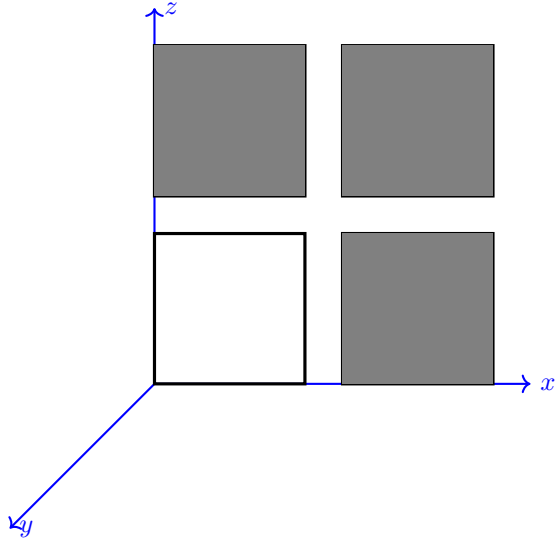

Let us define the event $D(y=1)$ in $S_{y=1}$ as 
$$D(y=1):= \{[(1,m),(1),(m)] \; \text{is non-empty} \} \cap \{ [(m),(1),(1,m)] \; \text{is non-empty} \}. $$

It is easy to see that if the events $A_1, B, B_i $ for $i \in \{8,9,10\}$ and $D(y=1)$ occurs, then every vertex on $[(1,2m),(1),(1,2m)]$ will be infected except for those on the sides of $[2m]^3$. 

We will define the event $D(x=1)$ in $S_{x=1}$  and the event $D(z=1)$ in $S_{z=1}$ accordingly.

Therefore, we have 
$$\mathbb{P}(E^c \cap A_1 \cap B \cap (\cap_{i=8}^{10}B_i) \cap D(x=1) \cap D(y=1) \cap D(z=1) )=0.$$
and thus we have 
\begin{align*}
  \mathbb{P}(E^c \cap A_1 \cap B) & =  \mathbb{P}(E^c \cap A_1 \cap B \cap (\cap_{i=8}^{10}B_i) \cap D(x=1) \cap D(y=1) \cap D(z=1))\\
  &+\mathbb{P}(E^c \cap A_1 \cap B \cap ((\cap_{i=8}^{10} B_i) \cap D(x=1) \cap D(y=1) \cap D(z=1))^c)\\
  & =\mathbb{P}(E^c \cap A_1 \cap B \cap ((\cap_{i=8}^{10} B_i) \cap D(x=1) \cap D(y=1) \cap D(z=1))^c)
\end{align*} 

We need to further analyze the event $E^c \cap A_1 \cap B \cap ((\cap_{i=8}^{10} B_i) \cap D(x=1) \cap D(y=1) \cap D(z=1))^c$. Note that $\mathbb{P}(B_i^c) \leq C(1-p)^{4m}$ for $i \in \{8,9,10\}$.

Since $\mathbb{P}(D(y=1)^c)=2(1-p)^{m}$, we need to construct some other events. Let us consider the event $D(y=1)^c$ and we have 

\begin{align*}
D(y=1)^c = D_1 \cup D_2 \cup D_3
\end{align*}
where 
$$D_1=\{[(1,m),(1),(m)] \; \text{is non-empty}\} \cap \{ [(m),(1),(1,m)] \; \text{is empty} \}, $$
$$D_2=\{[(1,m),(1),(m)] \; \text{is empty} \} \cap \{ [(m),(1),(1,m)] \; \text{is non-empty} \},  $$
and 
$$ D_3=\{[(1,m),(1),(m)] \; \text{is empty} \} \cap \{ [(m),(1),(1,m)] \; \text{is empty} \}. $$

We will need to do further analysis based on the event $D(y=1)^c$. 

Case 1: Assume that the event $D_1$ occurs.

In order to proceed we need to define another event. Let $F_1(y=1)$ be the event that one of $[(m),(1),(m+1,2m)]$ or $[(m+1),(1),(m+1,2m)]$ is non-empty. It is easy to see that if the events $A_1, B, D_1, F_1(y=1)$ and $B_i$ for $i \in \{8,9,10\}$ occur, then every vertex on $[(1,2m),(1),(1,2m)]$ will be infected after at most $2m^2+Cm$ steps, except for those on the sides of $[2m]^3$. 

Now we need to consider $A_1 \cap B \cap_{i=8}^{10} B_i \cap D_1 \cap F_1(y=1)^c$. Let $F_2(y=1)$ be the event that $[(m+1),(1),(1,m)]$ is non-empty. It is easy to see that if the events $A_1, B,  D_1, F_1^c(y=1),F_2(y=1), B_i$ for $i \in \{8,9,10\}$ occur, then every vertex on $[(1,2m),(1),(1,2m)]$ will be infected except for the sides of $[2m]^3$.

Since $\mathbb{P}(D_1 \cap F_1(y=1)^c \cap F_2(y=1)^c) \leq (1-p)^{4m}$ and Fact $\ref{fact}$, we have the desired result. 

\vspace{2mm}
Case 2: Assume that the event $D_2$ happens.

The analysis is the same as Case 1 due to symmetry.

\vspace{2mm}
Case 3: Assume that the event $D_3 $ happens. 


Before proceeding we need to define another event. Define $F_3(y=1)$ to be the event that both $[(m+1),(1),(m+1,2m)]$ and $[(m+1,2m),(1),(m+1)]$ are non-empty. It is easy to see that if the events $A_1,B,D_3$, $F_3(y=1)$ and $B_i$ for $i \in \{8,9,10\}$ occur, then every vertex on $[(1,2m),(1),(1,2m)]$ have been infected after at most $2m^3+Cm$ steps, except for those on the sides of $[2m]^3$.

Now we need to analyze $F_3(y=1)^c$. Define the event $H$ be the event that $[(1,m),(1),(m+1)]$ is non-empty. 

We have 
$$F_3(y=1)^c=F_1 \cup F_2 \cap F_3$$
where 
$$F_1=\{[(m+1),(1),(m+1,2m)] \; \text{is non-empty}\} \cap \{ [(m+1,2m),(1),(m+1)] \; \text{is empty} \}, $$
$$F_2=\{[(m+1),(1),(m+1,2m)] \; \text{is empty} \} \cap \{ [(m+1,2m),(1),(m+1)] \; \text{is non-empty} \},  $$
and 
$$ F_3=\{[(m+1),(1),(m+1,2m)] \; \text{is empty} \} \cap \{ [(m+1,2m),(1),(m+1)] \; \text{is empty} \}. $$ 
Assume that $F_1$ occurs. Then again it is easy to see that if the events $A_1,B,D_3$, $F_1$, $H$ and $B_i$ for $i \in \{8,9,10\}$ occur, then every vertex on $[(1,2m),(1),(1,2m)]$ have been infected after at most $2m^3+Cm$ steps, except for those on the sides of $[2m]^3$. 
Note that $\mathbb{P}(D_3 \cap F_1 \cap H^c)=(1-p)^{4m}$.

Assume that $F_2$ occurs. The analysis is the same as the case where $F_    1$ occurs.

Assume that $F_3$ occurs. Note that $\mathbb{P}(D_3 \cap F_3)=(1-p)^{4m}$.\qedhere 

\vspace{2mm}

\end{proof}

Now let us move on to estimating the term $\mathbb{P}(E^c \cap A_2 \cap B)$. 
\begin{lemma} \label{lemma: E A2 B}
The probability that $[2m]^3$ is bad along with the occurrence of the event that there are exactly two subcubes are bad is correlated with double empty line segments of length $2m$. More precisely, we have 

$$\mathbb{P}(E^c \cap A_2) \leq C(d)m^2(1-p)^{4m-8}.$$
\end{lemma}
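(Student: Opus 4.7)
The proof will follow the template established in Lemma~\ref{lemma: E A1 B} but must accommodate the additional combinatorial complexity introduced by a second bad subcube. The plan is to first write
$$\mathbb{P}(E^c \cap A_2) \leq \mathbb{P}(E^c \cap A_2 \cap B) + \mathbb{P}(B^c),$$
and since $\mathbb{P}(B^c) \leq C(1-p)^{4m}$ is already below the target bound, the core task reduces to controlling $\mathbb{P}(E^c \cap A_2 \cap B)$.

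I would then classify the pair of bad subcubes from $\{C_1,\ldots,C_8\}$ into three symmetry types: (i) \emph{face-adjacent}, e.g.\ $\{C_1,C_2\}$; (ii) \emph{edge-adjacent} (sharing only an edge), e.g.\ $\{C_1,C_4\}$; and (iii) \emph{body-diagonal}, e.g.\ $\{C_1,C_8\}$. The $\binom{8}{2}=28$ pairs distribute among these three orbits under the symmetry group of the cube, and each orbit contributes a constant absorbed into $C(d)$; it therefore suffices to prove the bound for one representative of each type. For each representative I will construct analogues of the sets $S_{\text{start}}, S_{x=1}, S_{y=1}, S_{z=1}$ from the proof of Lemma~\ref{lemma: E A1 B}, adapted to the two-bad-subcube setting. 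A deterministic geometric argument mirroring Lemma~\ref{lemma: A B} will show that once the interiors of the six good subcubes are infected, the infection can then spread across their shared faces into the two bad subcubes and ultimately fill $\text{int}([2m]^3)$, provided enough line segments bordering the bad pair are non-empty.

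The probability-of-failure analysis then proceeds by a case split analogous to the Case 1, Case 2, Case 3 analysis at the end of Lemma~\ref{lemma: E A1 B}, using Fact~\ref{fact} at each step to pair independent empty events. The extra $m^2$ factor, compared with the $(1-p)^{4m}$ bound of Lemma~\ref{lemma: E A1 B}, is expected to arise as follows: with two bad subcubes the infection can no longer originate from a single specified initially-infected vertex on one side, and instead has to be seeded from a pair of initially-infected vertices, one in each of two line segments of length $\approx m$, producing $\Theta(m^2)$ position choices. The offset $4m-8$ in the exponent reflects that two of the auxiliary empty line segments now have length $m-2$ instead of $m$, losing $4 \cdot 2 = 8$ in the total length.

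The main obstacle I anticipate is the face-adjacent case (i). There the two bad subcubes together form a $2m \times m \times m$ slab whose interior cannot receive infection across the shared $m \times m$ face, since both sides of that face lie in bad subcubes, so lateral propagation from the remaining four good subcubes touching the slab must do all the work. Handling this will require introducing additional auxiliary events controlling non-emptiness of line segments on the four outer sides of the slab, and the case analysis for their failure will be noticeably more intricate than in the single-bad-subcube setting; by contrast, in cases (ii) and (iii) the two bad subcubes are separated by good subcubes along at least one axis, so the arguments should closely mirror those of Lemma~\ref{lemma: E A1 B} with only minor adjustments.
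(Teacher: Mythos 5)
Your proposal is correct and takes essentially the same approach as the paper: split on the geometric relation between the two bad subcubes (face-adjacent, edge-adjacent, body-diagonal -- exactly the paper's Cases~1,~2,~3), build auxiliary event families mirroring those of Lemma~\ref{lemma: E A1 B}, and combine with Fact~\ref{fact}. You also correctly identify the face-adjacent slab as the hard case; the paper handles it with a new event $H'$ requiring every pair of adjacent $x$-parallel line segments of length $2m-2$ in the slab to be nonempty, which is the source of the $m^2(1-p)^{4m-4}$ contribution, and the $(1-p)^{4m-8}$ piece comes from the length-$(m-2)$ segment pairs in $E_1(y=1), E_3(y=1)$, matching your accounting of the exponents.
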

\begin{proof}
In order to estimate $\mathbb{P}(E^c \cap A_2 \cap B)$ we will separate cases based on the locations of the two bad subcubes. 

\vspace{2mm}
Case 1: Two bad cubes are adjacent. 

Without loss of generality assume the subcubes $C_1$ and $C_2$ are bad, as in Figure \ref{figure6}.  

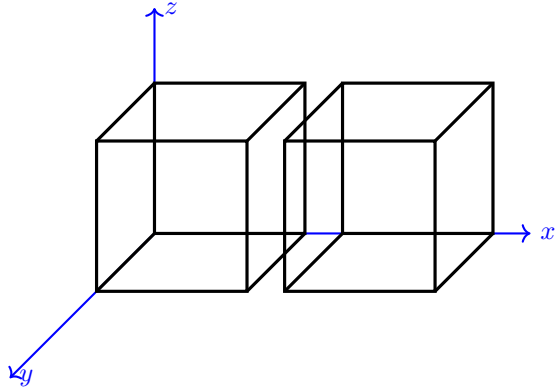
\begin{figure}[htbp]
\centering
\begin{tikzpicture}
		[cube/.style={very thick,black},
			grid/.style={very thin,gray},
			axis/.style={->,blue,thick}]

			
	\draw[axis] (0,0,0) -- (5,0,0) node[anchor=west]{$x$};
	\draw[axis] (0,0,0) -- (0,3,0) node[anchor=west]{$z$};
	\draw[axis] (0,0,0) -- (0,0,5) node[anchor=west]{$y$};

	\draw[cube] (0,0,0) -- (0,2,0) -- (2,2,0) -- (2,0,0) -- cycle;
	\draw[cube] (0,0,2) -- (0,2,2) -- (2,2,2) -- (2,0,2) -- cycle;

        \draw[cube] (0,0,0) -- (0,0,2);
	\draw[cube] (0,2,0) -- (0,2,2);
	\draw[cube] (2,0,0) -- (2,0,2);
	\draw[cube] (2,2,0) -- (2,2,2);

        \draw[cube] (2.5,0,0) -- (2.5,2,0) -- (4.5,2,0) -- (4.5,0,0) -- cycle;
	\draw[cube] (2.5,0,2) -- (2.5,2,2) -- (4.5,2,2) -- (4.5,0,2) -- cycle;
        \draw[cube] (2.5,0,0) -- (2.5,0,2);
	\draw[cube] (2.5,2,0) -- (2.5,2,2);
	\draw[cube] (4.5,0,0) -- (4.5,0,2);
	\draw[cube] (4.5,2,0) -- (4.5,2,2);
 
\end{tikzpicture}
\caption{Bad subcubes $C_1$ and $C_2$.} 
\label{figure6}
\end{figure}

The main idea of the proof is somewhat similar to that of Lemma ~\ref{lemma: E A1 B}. We will construct 5 sets of events $T_{start},T_{x=1},T_{y=1}$, $T_{z=1}$ and $T_{x=2m}$ such that, if the events in these 5 sets occur along with events $A_2$ and $B$, then the event $E$ also occurs; that is, the cube $[2m]^d$ is good. Moreover, the probability that some of these events in these 5 sets fails to occur is approximately $(1-p)^{4m}$. If the probability that some of the events in these 5 sets fails to occur is significantly larger than $(1-p)^{4m}$, then a more detailed analysis is required. The rough idea is that other events, not contained in $T_{\text{start}}$, $T_{x=1}$, $T_{y=1}$, $T_{z=1}$,or $T_{x=2m}$ can also lead to percolation. The probability of the complement of these additional events, together with the complement of the events in $T_{\text{start}}$, $T_{x=1}$, $T_{y=1}$,$T_{z=1}$,and $T_{x=2m}$ is  $\leq (1-p)^{4m}$. The exact details are presented below.

\vspace{2mm}
 The infection spreads as follows. First, after at most $m^3$ steps, the interior of $C_3,\dots,C_8$ have been infected since $C_3,\dots, C_8$ are good. Then the sides of the subcubes $C_3, \dots, C_8$ become infected (except for those on $[(1,2m),(1),(1,2m)]$, $[(1),(1,2m),(1,2m)]$, $[(1,2m),(1,2m),(1)]$ and $[(2m),(1,2m),(1,2m)]$ and those belonging to the sides of $[2m]^d$ )after at most $m^3+Cm$ steps, due to the occurrence of the events in $S_{start}$. Here it comes to the difference between this case and the case where there is only one subcube is bad. We will need to define a new event $H'$ to be \{every two adjacent line segments of the form $[(2,2m-1),(y),(z)]$ of length $2m-2$ in $[(1,2m),(1,m),(1,m)]$ parallel to the $x$-axis is non-empty\}, where $y, z \in \{1,2,\dots,m\}$.
 
 Then, after at most $3m^3+Cm$ steps the interior of $C_1$ and $C_2$ have been infected due to the occurrence of the events $H'$. Finally, the uninfected vertices on $[(1,2m),(1),(1,2m)]$, $[(1),(1,2m),(1,2m)]$ and $[(1,2m),(1,2m),(1)]$ will be infected after at most $3m^3+Cm'$ steps, due to the occurrence of the event in $T_{x=1},T_{y=1}$, $T_{z=1}$ and $T_{x=2m}$.

It is easy to see that $\mathbb{P}(H'^c) \leq C m^2(1-p)^{4m-4}$. 

\vspace{2mm}
Now we will start defining the events in the set $T_{start}$. It turns out that $B_8$ and $B_{10}$ in the set $T_{start}$ serve our purpose. It is easy to see that after at most $2m^3+Cm$ steps, every vertex in $[(1,2m),(1,2m),(1,2m)]$ will be infected except for the vertices in $[(1),(1,2m),(1,2m)]$,$[(1,2m),(1),(1,2m)]$, $[(1,2m),(1,2m),(1)]$, and $[(2m),(1,2m),(1,2m)]$\} and the sides of $[2m]^3$ due to the occurrence of $A_2,B_8,B_{10}$ and $H'$. 

We can apply the same method from Lemma \ref{eq: P(E A1 d)} to infect uninfected vertices in $[(1),(1,2m),(1,2m)]$ , $[(1,2m),(1,2m),(1)]$, and $[(2m),(1,2m),(1,2m)]$\}. Therefore, we just need to focus on infecting the vertices on $[(1,2m),(1),(1,2m)]$.

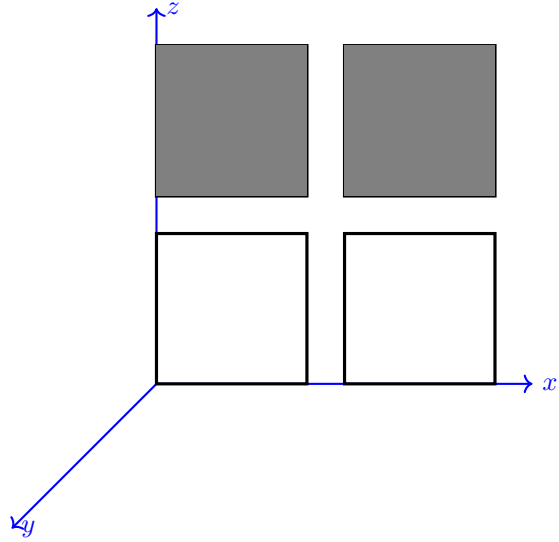
\begin{figure}[htbp]
\centering
\begin{tikzpicture}
		[cube/.style={very thick,black},
			grid/.style={very thin,gray},
			axis/.style={->,blue,thick}]

			
	\draw[axis] (0,0,0) -- (5,0,0) node[anchor=west]{$x$};
	\draw[axis] (0,0,0) -- (0,5,0) node[anchor=west]{$z$};
	\draw[axis] (0,0,0) -- (0,0,5) node[anchor=west]{$y$};

	\draw[cube] (0,0,0) -- (0,2,0) -- (2,2,0) -- (2,0,0) -- cycle;
	\draw[cube] (0,2.5,0) -- (0,4.5,0) -- (2,4.5,0) -- (2,2.5,0) -- cycle;
       \draw[cube] (2.5,0,0) -- (2.5,2,0) -- (4.5,2,0) -- (4.5,0,0) -- cycle;
       \draw[cube] (2.5,2.5,0) -- (2.5,4.5,0) -- (4.5,4.5,0) -- (4.5,2.5,0) -- cycle;

  \fill[gray] (0,2.5,0) -- (0,4.5,0) -- (2,4.5,0) -- (2,2.5,0) -- cycle;
  \fill[gray] (2.5,2.5,0) -- (2.5,4.5,0) -- (4.5,4.5,0) -- (4.5,2.5,0) -- cycle;
  
\end{tikzpicture}
\caption{Configuration on $[(1,2m),(1),(1,2m)]$ after $3m^3+Cm^2$ steps.} 
\label{figure9}

\end{figure}
Consider the vertices in the region $[(1,2m),(1),(1,2m)]$ as shown in Figure~\ref{figure9}, where the vertices in the shaded area are infected, and those in the white area remain uninfected. Note that since the subcubes $C_5$ and $C_6$ are good, the only uninfected vertices are in the sides of $C_5$ and $C_6$ and $[(1,2m)(1)(1,m)]$ after $3m^3+Cm^2$ steps.

\vspace{2mm}
Let us define the events in $T_{y=1}$. Let 

$E_1(y=1):=$\{every two adjacent line segments of length $m-2$ in $[(2,m-1),(1),(1,2m)]$ parallel to the $x$-axis is non-empty\},

\vspace{1mm}
$E_2(y=1):=$\{one of $[(m),(1),(2,2m-1)]$ or $[(m+1),(1),(2,2m-1)]$ is non-empty\},

\vspace{1mm}

and $E_3(y=1):=$\{every two adjacent line segments of length $m-2$ in $[(m+2,2m-1),(1),(1,2m)]$ parallel to the $x$-axis is non-empty\}.

\vspace{2mm}
It is clear that if the event $E_2(y=1)$ occurs along with either the event $E_1(y=1)$ or the event $E_3(y=1)$ then every vertex in  $[(1,2m),(1),(1,2m)]$ will be infected except for the sides of $[2m]^3$. 

Note that $\mathbb{P}(E_1(y=1)^c \cap E_3(y=1)^c)= (1-p)^{4m-8}$ and $\mathbb{P}(E_2(y=1)^c)= (1-p)^{4m-4}$. By Fact \ref{fact}, we have the desired result. 

\vspace{2mm}
Case 2: Two bad subcubes are non-adjacent and not located diagonally. 

Without loss of generality, assume that the subcubes $C_1$ and $C_4$ are bad as in Figure \ref{figure7}. 

The main idea is as follows. We will define two new events $E_4$ and $E_5$ such that along with the occurrence of the events $A_2$ and $B$, every vertex in $[(1,2m),(1,2m),(1,2m)]$ has been infected except for those belonging to $[(1),(1,2m),(1,2m)]$, $[(1,2m),(1),(1,2m)]$, $[(1,2m),(1,2m),(1)]$, $[(2m),(1,2m),(1,2m)]$, $[(1,2m),(2m),(1,2m)]$ and the edges of $[2m]^3$ after at most $2m^3+Cm$ steps. Another requirement for the events $E_4$ and $E_5$ is that $\mathbb{P}(E_4^c \cap E_5^c)$ is roughly  $(1-p)^{4m}$. Then consider the uninfected vertices on  $[(1),(1,2m),(1,2m)]$, $[(1,2m),(1),(1,2m)]$, $[(1,2m),(1,2m),(1)]$, $[(2m),(1,2m),(1,2m)]$, $[(1,2m),(2m),(1,2m)]$ and the edges of $[2m]^3$ after $2m^3+Cm$ steps. The analysis of infection for these uninfected vertices is the same as in the cases where the two bad subcubes are adjacent or where there is only one bad subcube.

Now let us define events $E_4$ and $E_5$. 

$E_4:=$ \{every two adjacent line segments of length $m-2$ on $[(m+2,2m-1),(2,m+1),(m)]$ parallel to the $x$-axis is non-empty\},  
and 

$E_5:=$\{ at least one of the two line segments $[(m),(2,2m-1),(m)]$ or $[(m+1),(2,2m-1),(m)]$ is non-empty\}. 


\begin{figure}[htbp]
 \centering

\begin{tikzpicture}
		[cube/.style={very thick,black},
			grid/.style={very thin,gray},
			axis/.style={->,blue,thick}]

			
	\draw[axis] (0,0,0) -- (5,0,0) node[anchor=west]{$x$};
	\draw[axis] (0,0,0) -- (0,3,0) node[anchor=west]{$z$};
	\draw[axis] (0,0,0) -- (0,0,5) node[anchor=west]{$y$};

	\draw[cube] (0,0,0) -- (0,2,0) -- (2,2,0) -- (2,0,0) -- cycle;
	\draw[cube] (0,0,2) -- (0,2,2) -- (2,2,2) -- (2,0,2) -- cycle;



        \draw[cube] (2.5,0,2.5) -- (2.5,2,2.5) -- (4.5,2,2.5) -- (4.5,0,2.5) -- cycle;
	\draw[cube] (2.5,0,4.5) -- (2.5,2,4.5) -- (4.5,2,4.5) -- (4.5,0,4.5) -- cycle;
 
	\draw[cube] (0,0,0) -- (0,0,2);
	\draw[cube] (0,2,0) -- (0,2,2);
	\draw[cube] (2,0,0) -- (2,0,2);
	\draw[cube] (2,2,0) -- (2,2,2);



\draw[cube] (2.5,0,2.5) -- (2.5,0,4.5);
	\draw[cube] (2.5,2,2.5) -- (2.5,2,4.5);
	\draw[cube] (4.5,0,2.5) -- (4.5,0,4.5);
	\draw[cube] (4.5,2,2.5) -- (4.5,2,4.5);
	
\end{tikzpicture}
\caption{Bad subcubes $C_1$ and $C_4$.} 
\label{figure7}
\end{figure}
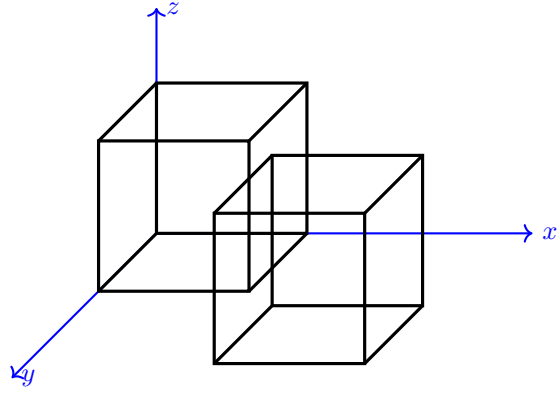

Case 3: Two bad subcubes are located diagonally. 

Without loss of generality we can assume the subcubes $C_3$ and $C_6$ are bad as in Figure \ref{figure8}.
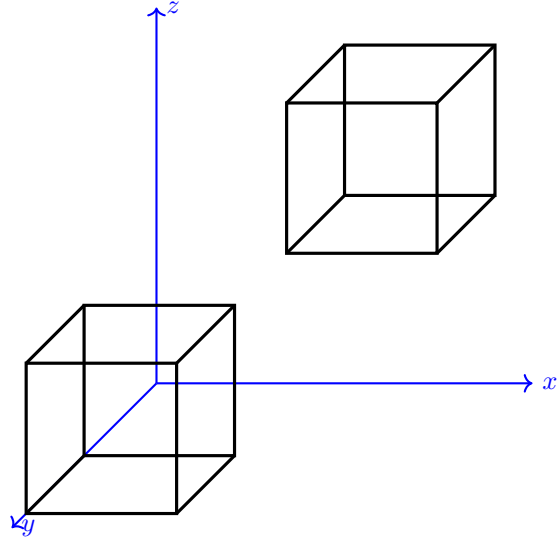
\begin{figure}[htbp]
\centering
\begin{tikzpicture}
		[cube/.style={very thick,black},
			grid/.style={very thin,gray},
			axis/.style={->,blue,thick}]

			
	\draw[axis] (0,0,0) -- (5,0,0) node[anchor=west]{$x$};
	\draw[axis] (0,0,0) -- (0,5,0) node[anchor=west]{$z$};
	\draw[axis] (0,0,0) -- (0,0,5) node[anchor=west]{$y$};



       \draw[cube] (0,0,2.5) -- (0,2,2.5) -- (2,2,2.5) -- (2,0,2.5) -- cycle;
	\draw[cube] (0,0,4.5) -- (0,2,4.5) -- (2,2,4.5) -- (2,0,4.5) -- cycle;




        \draw[cube] (0,0,2.5) -- (0,0,4.5);
	\draw[cube] (0,2,2.5) -- (0,2,4.5);
	\draw[cube] (2,0,2.5) -- (2,0,4.5);
	\draw[cube] (2,2,2.5) -- (2,2,4.5);


 \draw[cube] (2.5,2.5,0) -- (2.5,4.5,0) -- (4.5,4.5,0) -- (4.5,2.5,0) -- cycle;
    \draw[cube] (2.5,2.5,2) -- (2.5,4.5,2) -- (4.5,4.5,2) -- (4.5,2.5,2) -- cycle;
        \draw[cube] (2.5,2.5,0) -- (2.5,2.5,2);
	\draw[cube] (2.5,4.5,0) -- (2.5,4.5,2);
	\draw[cube] (4.5,2.5,0) -- (4.5,2.5,2);
	\draw[cube] (4.5,4.5,0) -- (4.5,4.5,2);
	
\end{tikzpicture}
\caption{Bad subcubes $C_3$ and $C_6$.} 
\label{figure8}
\end{figure}

The main idea is as follows. We will define three new events $E_6$ , $E_7$ and $E_8$ such that along with the occurrence of the events $A_2$ and $B$, every vertex in $[(1,2m),(1,2m),(1,2m)]$ has been infected except for those belonging to  $[(1),(1,2m),(1,2m)]$, $[(1,2m),(1),(1,2m)]$, $[(1,2m),(1,2m),(1)]$, $[(2m),(1,2m),(1,2m)]$, $[(1,2m),(2m),(1,2m)]$, $[(1,2m),(1,2m),(2m)]$ and the sides of $[2m]^3$ after at most $2m^3+Cm$ steps. Another requirement for the events $E_6$, $E_7$ $E_8$ is that $\mathbb{P}(E_i^c)$ is roughly  $(1-p)^{4m}$ for $i \in \{6,7,8\}$. Then consider the uninfected vertices on  $[(1),(1,2m),(1,2m)]$, $[(1,2m),(1),(1,2m)]$, $[(1,2m),(1,2m),(1)]$, $[(2m),(1,2m),(1,2m)]$, $[(1,2m),(2m),(1,2m)]$ except for these on the sides of $[2m]^3$ after $2m^3+Cm$ steps. The analysis of infection for these uninfected vertices is the same as in the case where there is only one bad subcube.

Let us define the events $E_6$,$E_7$, and $E_8$. 

$E_6$:= \{at least one of the 4 line segments of 
$[(m),(1,m),(m)],[(m),(1,m),(m+1)],[(m+1),(1,m),(m)],$ or $[(m+1),(1,m),(m+1)]$ is non-empty\}, 

$E_7$:=  \{at least one of the 4 line segments $[(1,m),(m),(m)] [(1,m),(m),(m+1)],
[(1,m),(m+1),(m)],$ or  $[(1,m),(m+1),(m+1)]$ is non-empty\},

$E_8$=\{at least one of the 4 line segments $[(m+1,2m),(m),(m)], [(m+1,2m),(m),(m+1)],
[(m+1,2m),(m+1),(m)],$ or $[(m+1,2m),(m+1),(m+1)] $ is non-empty\}. \qedhere

\end{proof}

Now we are in a position to prove Lemma \ref{lemmaV1}, which is the main lemma in this subsection. 
\begin{proof}
In order to estimate the probability of the event $E^c$ we will write the event $E^c$ in a different way and so we have, by using Lemma \ref{lemma: A B}, Lemma \ref{lemma: E A1 B} and Lemma \ref{lemma: E A2 B}, 
\begin{align*}
  \mathbb{P}(E^c) &= \mathbb{P}(E^c \cap A \cap B)+\mathbb{P}(E^c \cap A^c \cap B)+\mathbb{P}(E^c \cap A \cap B^c)+\mathbb{P}(E^c \cap A^c \cap B^c) \\
  &\leq 2 \mathbb{P}(B^c)+\mathbb{P}(E^c \cap A^c\cap B)\\
  & = 2 \mathbb{P}(B^c) + \mathbb{P}(\cup_{i=1}^8(E^c \cap A_i\cap B)) \\
  & \leq C' (1-p)^{4m}+\mathbb{P}(E^c \cap A_1 \cap B) + \mathbb{P}(E^c \cap A_2 \cap B) + \sum_{i=3}^8 \mathbb{P}(A_i) \\
  & \leq C' (1-p)^{4m}+\mathbb{P}(E^c \cap A_1 \cap B) + \mathbb{P}(E^c \cap A_2 \cap B) + \sum_{i=3}^8 \binom{8}{i} \eta_{m,3}^i \\
  & \leq C' (1-p)^{4m}+\mathbb{P}(E^c \cap A_1 \cap B) + \mathbb{P}(E^c \cap A_2 \cap B)+ C \eta_{m,3}^3\\
  & \leq Bm^2(1-p)^{4m-8}+ C \eta_{m,3}^3
\end{align*}
where $C$ and $C'$ are absolute constants. \qedhere
\end{proof}

\subsection{$d=r \geq 3$}
 This lemma gives a recursive relation on the probability that $[m]^d$ is bad. As mentioned in the introduction, we prove Lemma ~\ref{lemmaV3} using induction, where Lemma ~\ref{lemmaV1} is the base case.

\begin{lemma} \label{lemmaV3} 
Let $\eta_{m,r}$ be the probability that $[m]^d$ is bad with the infection threshold $r=d$. We have
   $$\eta_{2m,d} \leq C(d)\eta_{m,d}^3+C(d)m^{d-1}(1-p)^{4m-8},$$
\end{lemma}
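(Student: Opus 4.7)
The plan is to proceed by induction on $d$, with Lemma~\ref{lemmaV1} serving as the base case $d=3$. The inductive step mirrors the template of the $d=3$ argument, adapted to the $d$-dimensional geometry. First I would partition $[2m]^d$ into $2^d$ disjoint subcubes $C_1,\dots,C_{2^d}$, each a translate of $[m]^d$, and define $E$ to be the event that $[2m]^d$ is good, with $A_i$ the event that exactly $i$ of the subcubes are bad. Decomposing
$$\mathbb{P}(E^c) \leq \sum_{i=0}^{2^d} \mathbb{P}(E^c \cap A_i),$$
the regime $i \geq 3$ is disposed of immediately by the union bound $\mathbb{P}(A_i) \leq \binom{2^d}{i}\eta_{m,d}^i \leq C(d)\eta_{m,d}^3$, producing the first term in the claimed inequality. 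The rest of the work is confined to $i \in \{0,1,2\}$.

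For the $i=0,1$ regimes, I would construct an analog of the event $B$ from the $d=3$ proof: an intersection of events, each stating that at least one of two specified line segments of length $2m$ (lying on the internal sides of the partition) is nonempty, so that $\mathbb{P}(B^c) \leq C(d)(1-p)^{4m}$. Following Lemma~\ref{lemma: A B}, I would verify $A \cap B \implies E$, which makes the $i=0$ contribution under $B$ vanish. For $i=1$, I would mimic Lemma~\ref{lemma: E A1 B}: assume without loss of generality that $C_1$ is bad, define higher-dimensional analogs of $B_8, B_9, B_{10}$ along each of the $d$ coordinate directions together with the auxiliary events $D(x_j = 1)$ for $j \in [d]$, and carry out the Case~1--Case~3 analysis on the refinements $D_1,D_2,D_3$ and $F_1,F_2,F_3$ to obtain $\mathbb{P}(E^c \cap A_1 \cap B) \leq C(d)(1-p)^{4m}$.

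The $i=2$ regime is the source of the $m^{d-1}(1-p)^{4m-8}$ term and is where most of the casework lives. Two bad subcubes can share a face of dimension anywhere from $d-1$ down to $0$, yielding $d$ sub-cases (versus three for $d=3$). In the most adjacent sub-case, where $C_i$ and $C_j$ meet along a $(d-1)$-dimensional face, I would introduce the analog of the event $H'$: every pair of adjacent line segments of length $2m-2$ lying in a $(d-1)$-dimensional slab parallel to the shared face and directed along the axis separating the two bad subcubes is nonempty. There are $\Theta(m^{d-1})$ such line segments, and a union bound over a failing pair contributes $C(d)m^{d-1}(1-p)^{4m-8}$, matching the second term of the lemma. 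For each less-adjacent sub-case, one defines analogs of $E_4,\dots,E_8$ from Lemma~\ref{lemma: E A2 B} and runs the same seed-and-spread argument after the interiors of the $2^d-2$ good subcubes have infected.

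The principal obstacle is the $i=2$ casework at higher $d$: the number of adjacency types grows with $d$, and the geometry of the shared faces between bad and good subcubes must be tracked carefully to justify that infection propagates correctly under threshold $r=d$. The saving grace is that the underlying mechanism is unchanged from $d=3$, namely that once a seed vertex exists on an internal side, the surrounding good subcubes supply the required $d$ infected neighbors along any given axis, so the arguments of Lemma~\ref{lemma: E A1 B} and Lemma~\ref{lemma: E A2 B} propagate case by case and, combined with the $i \geq 3$ bound, yield the claimed inequality.
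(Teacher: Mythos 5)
Your proposal and the paper agree on the coarse structure — partition $[2m]^d$ into $2^d$ subcubes, dispose of $i\geq 3$ bad subcubes via $\binom{2^d}{i}\eta_{m,d}^i$, and concentrate the work on $i\in\{0,1,2\}$ — but there is a genuine gap in how the $i\in\{0,1,2\}$ regimes are handled in the inductive step, and it is the central idea of the paper's proof.

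You announce an induction on $d$, but the proof you sketch never invokes the inductive hypothesis. Your plan for $d>3$ is to \emph{re-run} the $d=3$ construction verbatim: analogs of $B_1,\dots,B_{10}$, the $D_1,D_2,D_3$ refinements, the $F_1,F_2,F_3$ cases, and a casework over all $d$ adjacency types in the $A_2$ regime. That is a direct generalization, not an induction, and it underestimates the combinatorial growth: at $d=3$ the seed-and-spread argument of Lemma~\ref{lemma: E A1 B} already runs to three nested case distinctions over hand-crafted line-segment events, and the number of these distinctions (number of sides, number of adjacency types, number of corners at which subcube sides meet) all scale with $d$. You acknowledge this ``principal obstacle'' but then wave at it with ``the underlying mechanism is unchanged,'' which does not identify a way to control the blow-up.

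The paper's inductive step is genuinely dimensional. To pass from $d$ to $d+1$ it slices $[2m]^{d+1}$ into the $d$-dimensional layers $B((1,2m),d,i)$ orthogonal to one axis. After the neighboring good layers become infected (up to sides), every vertex in the layer $B((1,2m),d,i)$ for $i\in\{1,m,m+1,2m\}$ already has one infected neighbor \emph{outside} the layer, so within the $d$-dimensional layer the threshold-$(d+1)$ process behaves like the threshold-$d$ process. That observation is the engine of the argument: it lets the paper feed the layers back into the inductive hypothesis (the bound on $\mathbb{P}((E^d)^c\cap A^d)$, $\mathbb{P}((E^d)^c\cap A_1^d)$, $\mathbb{P}((E^d)^c\cap A_2^d)$ for one lower dimension) rather than redoing the geometric casework from scratch. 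Your proposal never states this threshold-reduction mechanism, never introduces the $d$-dimensional layer events $E_i^d$, and never applies the lower-dimensional bound inside the higher-dimensional proof. Without that, the $A_1$ and $A_2$ analyses you describe would have to handle genuinely $(d)$-dimensional geometry by hand, for each $d$, which is not clearly finite work and is in any case not what the paper does.

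Concretely, to fix the proposal you should: (i) state the inductive hypothesis as the three estimates \eqref{eq: P(E A d)}--\eqref{eq: P(E A2 d)} for dimension $d$; (ii) in the $(d+1)$-dimensional cube, slice into $d$-dimensional hyperplanes $B((1,2m),d,i)$ and identify the levels $i\in\{1,m,m+1,2m\}$ whose side structure needs separate treatment; (iii) observe that once adjacent layers are infected each non-side vertex of a critical layer has one infected neighbor off-layer, so the layer runs threshold-$d$ bootstrap percolation and the inductive hypothesis can be applied to bound $\mathbb{P}((E_i^d)^c)$; (iv) only then handle the residual corner/edge events with the finite collection of events $D$, $D'$, $E(1,2m)$, $B'$ as in the paper, which do not proliferate with $d$ because the layer structure has absorbed the bulk of the casework.
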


The intuition behind the proof of Lemma ~\ref{lemmaV3} is as follows. We begin by partitioning the cube $[2m]^{d+1}$ into $2^{d+1}$ disjoint subcubes and break the proof into several subcases according to the number of bad subcubes. Specifically, we analyze the cases where exactly one or two of the $2^{d+1}$ subcubes are bad as well as the case where none are bad.  The main technical part of the proof is devoted to the analysis to these three subcases, where we study the interaction arising among the $2^{d+1}$ subcubes by using the induction on the dimension $d$.   Further the probability of having at least three bad subcubes can be estimated by a very rough upper bound of $C\eta_{m,d+1}^3$, as established in Lemma ~\ref{lemmaV3}.

 Before proving this lemma we need to introduce some notation and prove several auxiliary lemmas. 

Consider partitioning $[2m]^d$ into $2^d$ subcubes, i.e
$$[(a_1,a_1+m-1),(a_2,a_2+m-1),...,(a_d,a_d+m-1)]$$ where $a_i=\{1,m+1 \}$ for $i \in [d]$.

First we need to define a few events.
\begin{itemize}
    \item $E^d:=[2m]^d$ is good.

\item $A^{d}:$=\{all $2^d$  subcubes are good \}.

 \item $A_i^{d}:$=\{exactly $i$ \text{subcubes among $2^d$ subcubes are bad} \} for $i \in [2^d]$. 
\end{itemize}
We would like to prove that for $d \geq 3$
\begin{align}
\mathbb{P}((E^d)^c \cap A^d) \leq C(d)(1-p)^{4m},   \label{eq: P(E A d)}
\end{align}
\begin{align}
  \mathbb{P}((E^d)^c \cap A_{1}^d) \leq C(d)m^{d-3}(1-p)^{4m},  \label{eq: P(E A1 d)}
\end{align}
and 
\begin{align}
    \mathbb{P}((E^d)^c \cap A_2^d) \leq C(d)m^{d-1}(1-p)^{4m-8}. \label{eq: P(E A2 d)}
\end{align}

Then we have
\begin{align*}
  \mathbb{P}((E^d)^c) &= \mathbb{P}((E^d)^c \cap A^d)+\mathbb{P}((E^d)^c \cap (A^d)^c )\\
  &= \mathbb{P}((E^d)^c \cap A^d)+\mathbb{P}(\cup_{i=1}^{2^d}((E^d)^c \cap A_i^d))\\
  & =\mathbb{P}((E^d)^c \cap A^d)+\mathbb{P}((E^d)^c \cap A_1^d) + \mathbb{P}((E^d)^c \cap A_2^d) + \sum_{i=3}^{2^d} \mathbb{P}(A_i^d) \\
  & \leq C(d)m^{d-1}(1-p)^{4m-8}+ C(d) \eta_{m,3}^3
\end{align*} 

When $d=3$ it is easy to see that (\ref{eq: P(E A d)})(\ref{eq: P(E A1 d)})(\ref{eq: P(E A2 d)}) are satisfied by Lemmas (\ref{lemmaV1})(\ref{lemma: E A1 B})(\ref{lemma: E A2 B}). Now assume that (\ref{eq: P(E A d)})(\ref{eq: P(E A1 d)})(\ref{eq: P(E A2 d)}) are satisfied for $d \geq 3$ we would like to show they are satisfied for $d+1$. 

Before proceeding, we need some notation. Let a $d$-dimensional subgrid $ [ \underbrace{(1,2m),(1,2m),\dots, (1,2m) }_{d},(i)]$ be denoted by $B((1,2m),d,i)$ and a $d$-dimensional subgrid $ [ \underbrace{(1,m),(1,m),\dots, (1,m) }_{d},(i)] $ by $B((1,m),d,i)$.

Let us define $E_i^{d}$ to be the event that $(B(1,2m),d,i)$ is good.

Let $B\left(a_1,a_2,\cdots, a_{i-1}, a_{i+1},\cdots,a_{j-1},a_{j+1},\cdots,a_d,(1,m)\right)$ be the event that at least one of the 4 line segments 
$$[(a_1),(a_2)\cdots,(a_{i-1}),(m),(a_{i+1}),\cdots,(a_{j-1}),(m),(a_{j+1}),\cdots,(a_d),(1,m)]$$
$$[(a_1),(a_2)\cdots,(a_{i-1}),(m),(a_{i+1}),\cdots,(a_{j-1}),(m+1),(a_{j+1}),\cdots,(a_d),(1,m)]$$
$$[(a_1),(a_2)\cdots,(a_{i-1}),(m+1),(a_{i+1}),\cdots,(a_{j-1}),(m),(a_{j+1}),\cdots,(a_d),(1,m)]$$
$$[(a_1),(a_2)\cdots,(a_{i-1}),(m+1),(a_{i+1}),\cdots,(a_{j-1}),(m+1),(a_{j+1}),\cdots,(a_d),(1,m)]$$
is non-empty, where $a_l \in \{1,m,m+1,2m\}$ for $l \in [d] \backslash \{i,j\}$. 

The event $B\left(a_1,a_2,\cdots, a_{i-1}, a_{i+1},\cdots,a_{j-1},a_{j+1},\cdots,a_d,(m+1,2m)\right)$ is defined similarly. 

Let $B(\sigma\left(a_1,a_2,\cdots, a_{i-1}, a_{i+1},\cdots,a_{j-1},a_{j+1},\cdots,a_d,(1,m)\right))$ denote the event that at least one of the 4 line segments
$$\sigma[(a_1),(a_2)\cdots,(a_{i-1}),(m),(a_{i+1}),\cdots,(a_{j-1}),(m),(a_{j+1}),\cdots,(a_d),(1,m)]$$
$$\sigma[(a_1),(a_2)\cdots,(a_{i-1}),(m),(a_{i+1}),\cdots,(a_{j-1}),(m+1),(a_{j+1}),\cdots,(a_d),(1,m)]$$
$$\sigma[(a_1),(a_2)\cdots,(a_{i-1}),(m+1),(a_{i+1}),\cdots,(a_{j-1}),(m),(a_{j+1}),\cdots,(a_d),(1,m)]$$
$$\sigma[(a_1),(a_2)\cdots,(a_{i-1}),(m+1),(a_{i+1}),\cdots,(a_{j-1}),(m+1),(a_{j+1}),\cdots,(a_d),(1,m)]$$
is nonempty, where $a_l \in \{1,m,m+1,2m\}$ for $l \in [d] \backslash \{i,j\}$ and $\sigma \in \mathfrak{S}_{d+1}$.

The event $B(\sigma\left(a_1,a_2,\cdots, a_{i-1}, a_{i+1},\cdots,a_{j-1},a_{j+1},\cdots,a_d,(m+1,2m)\right))$ is defined similarly, where $\sigma \in \mathfrak{S}_{d+1}$.

Let us define $D_{\sigma}(1,m)$ and $D_{\sigma}(1,2m)$ to be

$$D_{\sigma}(1,m):= B(\sigma\left(a_1,a_2,\cdots, a_{i-1}, a_{i+1},\cdots,a_{j-1},a_{j+1},\cdots,a_d,(1,m)\right)),$$
and
$$D_{\sigma}(m+1,2m):=B(\sigma\left(a_1,a_2,\cdots, a_{i-1}, a_{i+1},\cdots,a_{j-1},a_{j+1},\cdots,a_d,(m+1,2m)\right)).$$

Let us define $D$ to be 
\begin{align*}
D := \cap_{l \in [d+1] \backslash \{i,j\}}\cap_{a_l \in \{1,m,m+1,2m\}} \cap_{\sigma \in \mathfrak{S}_{d+1}} \big(D_{\sigma}(1,m) \cap D_{\sigma}(m+1,2m) \big). 
\end{align*}




Let $B'$ be defined as 
$$B':=\cap_{l \in [d+1] \backslash \{i,j\}}\cap_{a_l \in \{1,m,m+1,2m\}} \cap_{\sigma \in \mathfrak{S}_{d+1}} D_{\sigma}(m+1,2m).$$


Let $D\left(a_1,a_2,\cdots, a_{i-1}, a_{i+1},\cdots,a_d, m \right)$ be the event that at least one of the 4 line segments 
$$[(a_1),(a_2)\cdots,(a_{i-1}),(1,m),(a_{i+1}),\cdots,(a_d),(m)]$$
$$[(a_1),(a_2)\cdots,(a_{i-1}),(m+1,2m),(a_{i+1}),\cdots,(a_d),(m)]$$
$$[(a_1),(a_2)\cdots,(a_{i-1}),(1,m),(a_{i+1}),\cdots,(a_d),(m+1)]$$
$$[(a_1),(a_2)\cdots,(a_{i-1}),(m+1,2m),(a_{i+1}),\cdots,(a_d),(m+1)]$$
is non-empty, where $a_i \in \{1,2m\}$ for $i \in [d]$. 

The event $D(\sigma (a_1,a_2,\cdots, a_{i-1}, a_{i+1},\cdots,a_d, m))$ is defined similarly.

Let us define $D'$ to be 
$$D':=\cap_{j \in [d+1] \backslash {i}]}\cap_{a_j \in \{1,2m\}} \cap_{\sigma \in \mathfrak{S}_{d+1}} D\left(a_1,a_2,\cdots, a_{i-1}, a_{i+1},\cdots,a_d, m \right)$$

Let $E\left(a_1,a_2,\cdots, a_{i-1}, a_{i+1},\cdots,a_d,m\right)$ be an event that one of the 4 segments 
$$[(a_1),(a_2)\cdots,(a_{i-1}),(m),(a_{i+1}),\cdots,(a_d),(1,m)]$$
$$[(a_1),(a_2)\cdots,(a_{i-1}),(m+1),(a_{i+1}),\cdots,(a_d),(1,m)]$$
$$[(a_1),(a_2)\cdots,(a_{i-1}),(m),(a_{i+1}),\cdots,(a_d),(m+1,2m)]$$
$$[(a_1),(a_2)\cdots,(a_{i-1}),(m+1),(a_{i+1}),\cdots,(a_d),(m+1,2m)]$$

is non-empty, where $a_i \in \{1,2m\}$ for $i \in [d]$. 

The event $E\left(\sigma (a_1,a_2,\cdots, a_{i-1}, a_{i+1},\cdots,a_d,m)\right)$ is defined similarly. 

Let $E(1,2m)$ to be defined as 
$$E(1,2m):=\cap_{l \in [d+1] \backslash \{i\}}\cap_{a_l \in \{1,2m\}} \cap_{\sigma \in \mathfrak{S}_{d+1}}E\left(\sigma (a_1,a_2,\cdots, a_{i-1}, a_{i+1},\cdots,a_d,m)\right).$$

 Let us first deal with the case where all $2^{d+1}$ subcubes are good, where the subcubes are $[(a_1,a_1+m-1),(a_2,a_2+m-1),...,(a_{d+1},a_{d+1}+m-1)]$ where $a_i=\{1,m+1 \}$ for $i \in [d+1]$.

\begin{lemma} \label{lemma: E^d A^d}
The probability that $[2m]^d$ is bad along with the occurrence of the event that all subcubes are good is correlated with double empty line segments of length $2m$. More precisely, we have 
    \begin{align*}
\mathbb{P}((E^d)^c \cap A^d) \leq C(d)(1-p)^{4m}. 
\end{align*}
\end{lemma}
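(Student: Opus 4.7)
The plan is to parallel the argument used for the three-dimensional case (Lemma \ref{lemma: A B}), where the auxiliary event $B$ was decisive. Here the analogous role is played by the event $D$ defined just before the statement. My target is to establish the deterministic containment $A^d \cap D \subseteq E^d$; combined with a union bound $\mathbb{P}(D^c) \leq C(d)(1-p)^{4m}$, this immediately yields
$$ \mathbb{P}((E^d)^c \cap A^d) \leq \mathbb{P}((E^d)^c \cap A^d \cap D) + \mathbb{P}(D^c) = 0 + \mathbb{P}(D^c). $$

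First I would observe that under $A^d$, the interiors of all $2^d$ good subcubes become fully infected after at most $m^d$ steps, so the only vertices that can remain uninfected at that point lie on a side of some subcube: either on a $(d-1)$-dimensional interface where two adjacent subcubes meet, or on a face of higher codimension (edges, corners, and so on).

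Next I would fill in these remaining vertices in order of increasing codimension. Fix a length-$m$ seam segment on a $(d-1)$-dimensional interface; every vertex of this segment already has $d-1$ infected neighbors coming from the two adjacent fully-infected subcube interiors, so a single initially infected vertex on any of the four parallel length-$m$ segments guaranteed by the appropriate $D_{\sigma}$ component of $D$ suffices to propagate infection along the entire seam in $O(m)$ steps. Iterating over all such interfaces and then moving to codimension-two seams (edges) and codimension-three seams, I expect to conclude by induction on $d$, with the inductive hypothesis applied inside each $(d-1)$-dimensional interface playing the role of the three-dimensional base case from Lemma \ref{lemma: A B}.

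The main obstacle is the bookkeeping: one must verify that the events assembled into $D$ really cover every boundary situation arising as sides of increasing codimension get filled in, and fix a schedule of infection so that the threshold $r=d$ is met at every step. The definition of $D$ is engineered so that for each pair of transverse indices $i,j$, each fixing of side coordinates $a_l \in \{1,m,m+1,2m\}$, and each permutation $\sigma \in \mathfrak{S}_{d+1}$, the associated $D_{\sigma}$ event plants a seed that, once the neighboring subcube interiors are infected, deterministically propagates along its length-$m$ seam. For the probability bound, each constituent $D_{\sigma}(1,m)$ or $D_{\sigma}(m+1,2m)$ is the event that at least one of four disjoint length-$m$ segments is non-empty, so its complement has probability $(1-p)^{4m}$; the number of triples $(\sigma,\{i,j\},(a_l))$ over which $D$ intersects is bounded by $C(d)$, so a union bound gives $\mathbb{P}(D^c) \leq C(d)(1-p)^{4m}$, completing the argument.
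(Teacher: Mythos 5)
The central gap is your claim of a \emph{deterministic} containment $A^d \cap D \subseteq E^d$. This is not what the paper proves, and it is almost certainly false as stated. The paper's argument instead establishes the weaker containment
\[
A^{d+1} \cap D \cap E_1^d \cap E_m^d \cap E_{m+1}^d \cap E_{2m}^d \cap D' \cap E(1,2m) \;\subseteq\; E^{d+1},
\]
where $E_i^d$ is the event that the $d$-dimensional slice $B((1,2m),d,i)$ is good. Crucially, these slice-goodness events $E_i^d$ are \emph{random} events with strictly positive failure probability, and the induction hypothesis on $d$ is invoked precisely to bound $\mathbb{P}((E_i^d)^c)$ by (roughly) $(1-p)^{4m}$. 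You hint at ``applying the inductive hypothesis inside each $(d-1)$-dimensional interface,'' but you treat this as a deterministic fill-in procedure, whereas the paper uses it to control a \emph{probability}: after all subcube interiors are infected, each vertex of the slice $B((1,2m),d,i)$ (away from its sides) acquires an infected neighbor from an adjacent slice, so within the slice the infection threshold effectively drops to $d$, and one then appeals to the dimension-$d$ bound $\mathbb{P}((E^d)^c \cap A^d) \leq C(d)(1-p)^{4m}$. Without these probabilistic terms, and without the auxiliary events $D'$ and $E(1,2m)$ handling the residual corner segments, your chain of implications leaves uninfected vertices that $D$ alone does not account for, so the identity $\mathbb{P}((E^d)^c \cap A^d \cap D)=0$ does not hold.

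A secondary but consequential misreading: in this paper a ``side'' of $[m]^d$ is a \emph{one-dimensional} line segment (one coordinate free, all others fixed in $\{1,m\}$), not a $(d-1)$-dimensional interface between subcubes. The interior of $[m]^d$ therefore contains all of the $(d-1)$-dimensional faces except these one-dimensional edges, and the uninfected set after the subcube interiors are spanned is a union of line segments, not a nested hierarchy of faces. Your ``fill in order of increasing codimension'' scheme, and the claim that a vertex on a side has ``$d-1$ infected neighbors coming from the two adjacent fully-infected subcube interiors,'' both reflect this confusion. Your probability estimate $\mathbb{P}(D^c) \leq C(d)(1-p)^{4m}$ is correct, but it is only one of several terms the paper must control.
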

\begin{proof}

The idea for the proof is as follows. We will construct some events in the set $B_{start}$ such that with the occurrence of these events and the event $A^{d+1}$ then the event $E^{d+1}$ occur. Additionally, the probability of the complement of these events is roughly $(1-p)^{4m}$.

\vspace{1mm}

Assume that all $2^{d+1}$ subcubes are good, i.e $$[(a_1,a_1+m-1),(a_2,a_2+m-1),...,(a_{d+1},a_{d+1}+m-1)]$$ where $a_i=\{1,m+1 \}$ for $i \in [d+1]$,  are good. 

After $m^{d+1}$ steps, the only possibly uninfected vertices on $B((1,2m),d,i)$ for $i \notin \{1,m,m+1,2m\}$ are $[(a_1),(a_2),\cdots,(a_{d}),(i)]$, where $a_j \in \{1,m,m+1,2m\}$ for $j \in [d]$.

Now consider the possibly uninfected vertices on $B((1,2m),d,i)$ for $i \in \{1,m,m+1,2m\}$ after $m^{d+1}$ steps. Note that the vertices on the sides of subgrids 
$$[(a_1,a_1+m-1),(a_2,a_2+m-1),...,(a_{d},a_{d}+m-1),(j)],$$
where $a_i \in \{1,m+1\}$ for $i \in [d]$ and $j \in \{1,m,m+1,2m\}$, may have not been infected. 

Observe that the vertices in the sides of the subcubes $B((1,2m),d,i)$ where $i \in \{m,m+1\}$ do not belong to the sides of $[2m]^{d+1}$ except for the vertices of the form 
$$[(a_1),(a_2),\cdots, (a_d),(i)],$$
where $a_j \in \{1,m,m+1,2m\}$ for $j \in [d]$ and $i \in \{m,m+1\}$.

It turns out that  the events $D$, $D'$ and $E(1,2m)$ in $B_{start}$ serve our purpose.

\vspace{1mm}
With the occurrence of $D$ and $A^{d+1}$, after at most $m^{d+1}+C(d)m$ steps, every vertex on the subcube $B((1,2m),d,i)$ with $i \notin \{1,m,m+1,2m\}$ has been infected except for those belonging to the sides of $[2m]^{d+1}$ and the vertices in $[(a_1),(a_2),\cdots,(a_{i-1}),(m),(a_{i+1}),\cdots, (a_d),(1,2m)]$ and $[(a_1),(a_2),\cdots,(a_{i-1}),(m+1),(a_{i+1}),\cdots, (a_d),(1,2m)]$ where $a_j \in \{1,2m\}$ for $j \in [d]$.

\vspace{1mm}
Now every vertex, except possibly for the vertices on the sides of $B((1,2m),d,i)$, on subcubes $B((1,2m),d,i)$ for $i \in \{1,m,m+1,2m\}$ has exactly one infected neighbor which does not belong to $B((1,2m),d,i)$ for $i \in \{1,m,m+1,2m\}$. Hence we can use the induction hypothesis and thus we have
\begin{align*}
    \mathbb{P}((E^{d+1})^c \cap A^{d+1}) & \leq \mathbb{P}((E^{d+1})^c \cap A^{d+1} \cap D \cap E_{1}^d \cap E_{m}^d \cap E_{m+1}^d \cap E_{2m}^d)\\
    & + \mathbb{P}((E^{d+1})^c \cap A^{d+1} (\cap D \cap E_{1}^d \cap E_{m}^d \cap E_{m+1}^d \cap E_{2m}^d)^c)\\
    & \leq \mathbb{P}((E^{d+1})^c \cap A^{d+1} \cap D \cap E_{1}^d \cap E_{m}^d \cap E_{m+1}^d \cap E_{2m}^d) + 4 \mathbb{P}((E_1^d)^c)+ \mathbb{P}(D^c)\\
    &\leq \mathbb{P}((E^{d+1})^c \cap A^{d+1} \cap D \cap E_{1}^d \cap E_{m}^d \cap E_{m+1}^d \cap E_{2m}^d) + C(1-p)^{4m}
\end{align*}

Now we need to handle the term $\mathbb{P}((E^{d+1})^c \cap D \cap A^{d+1} \cap E_{1}^d \cap E_{m}^d \cap E_{m+1}^d \cap E_{2m}^d)$.


\vspace{1mm}

It clear that with the occurrence of the events $D, A^{d+1}, E_{1}^d, E_{m}^d, E_{m+1}^d, E_{2m}^d$ and $D'$, every vertex on $[2m]^{d+1}$ has been infected after at most $m^{d+1}+C(d)m$ steps, except for those on the sides of $[2m]^{d+1}$.


Thus we have 
$$\mathbb{P}((E^{d+1})^c \cap D \cap A^{d+1} \cap E_{1}^d \cap E_{m}^d \cap E_{m+1}^d \cap E_{2m}^d \cap D' \cap E(1,2m))=0$$

Note that 
$$\mathbb{P}((D')^c) \leq C(d)(1-p)^{4m},$$
and 
$$\mathbb{P}(E(1,2m)^c) \leq C(d)(1-p)^{4m}.$$

By Fact \ref{fact}, we have the desired result. \qedhere

\end{proof}
Now we will move onto the case where there is exactly one bad subcube and we will show the following lemma. 

\begin{lemma} \label{lemma: E^d A_1^d}
The probability that $[2m]^d$ is bad along with the occurrence of the event that there is exactly one bad subcube is correlated with double empty line segments of length $2m$. More precisely, we have 
    \begin{align*}   
  \mathbb{P}((E^d)^c \cap A_{1}^d) \leq C(d)m^{d-3}(1-p)^{4m}.
\end{align*}
\end{lemma}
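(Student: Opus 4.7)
The plan is to mirror the structure of Lemma \ref{lemma: E^d A^d}, combined with the sub-case analysis from the base case (Lemma \ref{lemma: E A1 B}), working inductively in the dimension exactly as in the treatment of $\mathbb{P}((E^d)^c \cap A^d)$. By the full symmetry of the $2^{d+1}$-fold partition of $[2m]^{d+1}$, we may assume without loss of generality that the unique bad subcube is the corner cube $C_{\mathrm{bad}} := [(1,m),\dots,(1,m)]$. The overall goal is to construct (i) a ``start'' collection $B_{\mathrm{start}}$ of events, modelled on $D$, $D'$, $E(1,2m)$ from Lemma \ref{lemma: E^d A^d}, and (ii) for each $i \in [d+1]$ a collection $S_{x_i=1}$ of events supported on the facet $\{x_i=1\} \cap [2m]^{d+1}$, such that the occurrence of $A_1^{d+1} \cap B_{\mathrm{start}} \cap \bigcap_i S_{x_i=1}$ deterministically forces the event $E^{d+1}$.

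Conditional on $A_1^{d+1} \cap B_{\mathrm{start}}$, the infection propagates through the $2^{d+1}-1$ good subcubes exactly as in Lemma \ref{lemma: E^d A^d}: after $O(m^{d+1})$ steps, the only possibly uninfected vertices of $[2m]^{d+1}$, apart from those on the sides of $[2m]^{d+1}$ (which will be handled by the $S_{x_i=1}$), lie in the $d+1$ facets adjacent to $C_{\mathrm{bad}}$, namely $F_i := \{x_i=1\} \cap [2m]^{d+1}$. The probability bound $\mathbb{P}(B_{\mathrm{start}}^c) \le C(d)(1-p)^{4m}$ transfers verbatim from Lemma \ref{lemma: E^d A^d}, and repeated application of Fact \ref{fact} will let us absorb it into the final estimate.

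For each facet $F_i$, we note that $F_i$ is a $d$-dimensional cube $[2m]^d$ whose natural partition into $2^d$ sub-blocks of side $m$ is induced by restricting the $2^{d+1}$-partition of $[2m]^{d+1}$: exactly one of these sub-blocks is the slice of $C_{\mathrm{bad}}$, while the other $2^d-1$ sub-blocks sit inside subcubes that are good in the $(d+1)$-dimensional sense and are therefore automatically good in the $d$-dimensional sense. Moreover, the propagation arranged by $B_{\mathrm{start}}$ will have infected within $F_i$ exactly the ``double line segment'' boundary data required to set up a $d$-dimensional instance of our problem with one bad corner. Applying the induction hypothesis (\ref{eq: P(E A1 d)}) to each $F_i$ thus yields a contribution of $C(d) m^{d-3}(1-p)^{4m}$ per facet; summing over the $d+1$ facets, combining with $B_{\mathrm{start}}$ via Fact \ref{fact}, and reindexing gives the target bound $C(d+1) m^{d-2}(1-p)^{4m}$ at dimension $d+1$.

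The main obstacle is that the events $S_{x_i=1}$ across different $i$ are neither independent nor disjoint, and a naive union bound fails to combine cleanly with the complement of $B_{\mathrm{start}}$. As already visible in the $d=3$ case, one has to decompose each $S_{x_i=1}^c$ into pieces analogous to $D_1, D_2, D_3$ and $F_1, F_2, F_3$ from Lemma \ref{lemma: E A1 B}, introduce auxiliary ``bridging'' events such as $H$, and repeatedly use Fact \ref{fact} to pair up two mutually independent complements of probability $(1-p)^{2m}$ into a single factor of $(1-p)^{4m}$. Carrying out this sub-case decomposition uniformly in $d$ — rather than by an exhaustive hand-enumeration — and correctly tracking how the extra combinatorial factor $m$ appears at each inductive step is the chief technical hurdle of the proof.
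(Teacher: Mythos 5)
Your plan of taking a start-event $B_{\mathrm{start}}$ and handling the boundary facets $F_i = \{x_i=1\}$ echoes the base case, but the inductive step has a genuine arithmetic gap and misidentifies where the dimension reduction actually happens. You apply the induction hypothesis (\ref{eq: P(E A1 d)}) once per boundary facet, getting a contribution $C(d)\,m^{d-3}(1-p)^{4m}$ per facet, and then claim that summing over $d+1$ facets ``and reindexing'' yields the target $C(d+1)\,m^{d-2}(1-p)^{4m}$. It does not: $(d+1)\cdot m^{d-3}$ is still $O(m^{d-3})$, and there is no $m$ to be found. The paper extracts the extra power of $m$ in a structurally different way: it slices $[2m]^{d+1}$ along a \emph{single} coordinate into the $d$-dimensional layers $B((1,2m),d,j)$ and applies the $d$-dimensional induction hypothesis to each of the $\Theta(m)$ layers $j\in[m]$ that intersect the bad subcube (each vertex in layer $j$ gains one infected neighbor from layer $j+1$, which reduces the effective threshold to $d$ inside the $d$-dimensional layer). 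The union bound over those $\approx m$ layers is precisely what produces the extra factor of $m$. Restricting attention only to the $d+1$ boundary facets loses this count, and it also leaves unaddressed how the interior of $C_{\mathrm{bad}}$ actually becomes infected --- that is accomplished in the paper exactly through this layer-by-layer propagation.

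A second problem is your assertion that the $2^d$ sub-blocks of $F_i$ inherit their good/bad status from the $2^{d+1}$ sub-blocks of $[2m]^{d+1}$, i.e.\ that the slice of a good $(d+1)$-dimensional subcube is ``automatically good'' in the $d$-dimensional sense while the slice of $C_{\mathrm{bad}}$ is bad. This is not true as stated: goodness is a property of the initial infection restricted to the cube in question, and the restriction of the infected set to a $d$-dimensional slice of a good $(d+1)$-dimensional subcube need not internally span that slice. The paper does not rely on any such inheritance; it conditions on the ``boost'' from an already-infected adjacent layer and then bounds each layer's failure probability by $\mathbb{P}((E^d)^c\cap A_i^d)$ for the various $i$, letting the induction hypothesis absorb whatever good/bad configuration the slice actually has.
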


\begin{proof}
There is exactly one bad subcube, i.e 
$$[(a_1,a_1+m-1),(a_2,a_2+m-1),...,(a_{d+1},a_{d+1}+m-1)]$$ where $a_i=\{1,m+1 \}$ for $i \in [d+1]$ are good except for one.

W.l.o.g, we can assume that the subcube $[m]^{d+1}$ is bad. 

The main idea of the proof is somewhat similar to that in the proof of Lemma \ref{lemma: E A1 B}. We will construct an event such that every vertex in $B((1,2m), d,i)$ has been infected for  all $i \in \{m+2,\cdots,2m-1\}$ after at most $m^{d+1}+C(d)m$ steps except for those vertices on the sides of $B((1,2m),d,i)$ for $i \in \{m+2,\cdots,2m-1\}$. For the uninfected vertices on $B((1,2m),d,m+1)$ and $B((1,2m),d,2m)$ we can use induction hypothesis because every vertex in $B((1,2m),d,m+1)$ and $B((1,2m),d,2m)$ has at least one infected neighbor which does not belong to $B((1,2m),d,m+1)$ and $B((1,2m),d,2m)$. The same can be applied to infecting vertices on $B((1,2m), d,i)$ for $i \in [m]$. Then we will deal with the rest of the uninfected vertices that are not in the sides of $[2m]^{d+1}$

\vspace{1mm}

The exact details are presented below.

After $m^{d+1}$ steps let us describe the uninfected vertices on $[2m]^{d+1}$. Every vertex on $B((1,2m),d,i)$ for $i \in \{m+2,m+3,\cdots,2m-1\}$ has been infected except for those of the form $[(a_1),(a_2),\cdots,(a_{d}),(i)]$, where $a_j \in \{1,m,m+1,2m\}$ for $j \in [d]$.

\vspace{1mm}
For the vertices on $B((1,2m),d,j)$ where $j \in \{m+1,2m\}$, all of them have been infected except for those on the sides of $$[(a_1,a_1+m-1),(a_2,a_2+m-1),...,(a_d,a_d+m-1),(j)]$$ where $a_i=\{1,m+1 \}$ for $i \in [d]$. 

\vspace{1mm}
For the vertices on $B((1,2m),d,j)$ where $j \in \{1,m\}$, the vertices on $B((1,m),d,1)$and $B((1,m),d,m)$ have not been infected. Additionally, the vertices on the edges of $$[(a_1,a_1+m-1),(a_2,a_2+m-1),...,(a_d,a_d+m-1),(j)]$$ where $a_i=\{1,m+1 \}$ for $i \in [d]$ have not been infected either. 

\vspace{1mm}

For the vertices on $B((1,2m),d,j)$ for $j \in \{2,3,\cdots,m-1\}$, the vertices on $B((1,m),d,j)$ have not been infected. Additionally, the vertices on the edges of $$[(a_1,a_1+m-1),(a_2,a_2+m-1),...,(a_d,a_d+m-1),(j)]$$ where $a_i=\{1,m+1 \}$ for $i \in [d]$ have not been infected either.

\vspace{2mm}

With the occurrence of the events $A_{1}^{d+1} $ and $B'$ then every vertex on $B((1,2m),d,i)$  for $i \in \{m+2,m+3,\cdots,2m-1\}$ has been infected after $m^{d+1}+C(d)m$ steps. except for those belonging to the sides of $[2m]^{d+1}$ and the vertices in $[(a_1),(a_2),\cdots,(a_{i-1}),(m),(a_{i+1}),\cdots, (a_d),(1,2m)]$ and $[(a_1),(a_2),\cdots,(a_{i-1}),(m+1),(a_{i+1}),\cdots, (a_d),(1,2m)]$ where $a_j \in \{1,2m\}$ for $j \in [d]$. 

\vspace{1mm}
Now note that every vertex in the subcube $B((1,2m),d,m+1)$ has at least one infected neighbor that is not on $B((1,2m),d,m+1)$ except for those that are in the sides of $B((1,2m(,d,m+1)$. So does every vertex in the subcube $B((1,2m),d,2m)$. Therefore we can use the induction to have
$$\mathbb{P}((E_{m+1}^d)^c \cap A_1^{d})\leq C(d)m^{d-3}(1-p)^{4m},$$ 
and 
$$\mathbb{P}((E_{2m}^d)^c \cap A_1^{d}) \leq C(d)m^{d-3}(1-p)^{4m}.$$ 

Since every vertex in the subcube $B((1,2m),d,m)$ has at least one infected neighbor that is not on $B((1,2m),d,m)$ except for those that are in the sides of $B((1,2m(,d,m+1)$, then the induction hypothesis can be applied and we have 
$$\mathbb{P}((E_{m}^{d})^c \cap A_1^{d}) \leq C(d)m^{d-3}(1-p)^{4m}.$$

\vspace{2mm}
 Inductively, every vertex on $[(1,2m),(1,2m),\cdots,(1,2m),(i)]$ for $i \in \{1,2,\cdots,m-1\}$ has been infected infected except for those on the edges of $B((1,2m),d,i)$ for $i \in \{1,2,\cdots,m-1\}$ after at most $2m^{d+1}+C(d)m^2$ steps. 

Now we need to deal with the uninfected vertices that are not in the sides of $[2m]^{d+1}$ after at most $2m^{d+1}+C(d)m^2$ steps. Let us describe them. For the vertices on $B((1,2m),d,i)$ for $i \in \{m+2,m+3,\cdots,2m-1\}$, the remaining uninfected vertices are on the sides of $[2m]^{d+1}$ and of the form $[(a_1),(a_2),\cdots,(a_{i-1}),(m),(a_{i+1}),\cdots, (a_d),(i)]$ and $[(a_1),(a_2),\cdots,(a_{i-1}),(m+1),(a_{i+1}),\cdots, (a_d),(i)]$ where $a_j \in \{1,2m\}$ for $j \in [d]$. 

\vspace{1mm}
For the vertices on $B((1,2m),d,j)$ where $j \in \{m+1,2m\} \cup [m]$, those remaining uninfected vertices are on the sides of $B((1,2m),d,j)$. More precisely, for the vertices on $B((1,2m),d,j)$, where $j \in \{2,3,\cdots,m-1\}$, the remaining uninfected vertices are $((1,m),(a_2),\cdots,(a_d),i)$ where $a_i=1$ for $i \in [d] \backslash \{1\}$ and of the form $[(a_1),(a_2),\cdots,(a_{i-1}),(m),(a_{i+1}),\cdots, (a_d),(i)]$ and $[(a_1),(a_2),\cdots,(a_{i-1}),(m+1),(a_{i+1}),\cdots, (a_d),(i)]$ where $a_j \in \{1,2m\}$ for $j \in [d] \backslash \{i\}$ and $[(a_1),(a_2),\cdots, (a_d),(i)]$ where $a_j \in \{1,2m\}$ for $j \in [d]$.

Let us consider the uninfected vertices on $\text{Perm}_{d+1}[(1,2m),\underbrace{(1),\cdots,(1)}_{d-1},(1,2m)]$. 
W.l.o.g, it is sufficient to consider the uninfected vertices on $[(1,2m),\underbrace{(1),\cdots,(1)}_{d-1},(1,2m)]$. 

Let $\mathscr{A}$ be event that both line segments 
$[(m+1,2m),(1),\cdots,(1),(m)]$
and 
$[(m),(1),(1),(m+1,2m)]$ are non-empty. 
This event is the same as the event $D(y=1)$ in the proof of Lemma \ref{lemma: E A1 B}. Therefore, the further analysis is the same as that in the proof of Lemma \ref{eq: P(E A1 d)}. 

The other uninfected vertices can be handled in the same way in Lemma \ref{lemma: E^d A^d}. 
\end{proof}

\begin{lemma}
The probability that $[2m]^d$ is bad along with the occurrence of the event that there are exactly two bad subcubes is correlated with double empty line segments of length $2m$. More precisely, we have 
  \begin{align*}
    \mathbb{P}((E^d)^c \cap A_2^d) \leq C(d)m^{d-1}(1-p)^{4m-8}. 
\end{align*}  
\end{lemma}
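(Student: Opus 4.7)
The plan is to prove this by induction on $d$, with Lemma~\ref{lemma: E A2 B} furnishing the base case $d=3$. Assuming the three bounds (\ref{eq: P(E A d)}), (\ref{eq: P(E A1 d)}), (\ref{eq: P(E A2 d)}) all hold in dimension $d \geq 3$, I would establish (\ref{eq: P(E A2 d)}) in dimension $d+1$. The argument mirrors the three-case analysis of Lemma~\ref{lemma: E A2 B}, now stratified by the Hamming distance $h$ between the two bad subcubes, i.e., the number of coordinates along which their defining parameters $(a_1, \ldots, a_{d+1}) \in \{1, m+1\}^{d+1}$ differ; so $h \in \{1, 2, \ldots, d+1\}$.

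When $h \geq 2$, I would pick a coordinate axis $k$ along which the two bad subcubes disagree and slice $[2m]^{d+1}$ along this axis into two halves. Each half, when viewed as a $[2m]^d$ cube, contains exactly one bad sub-subcube. Applying the inductive hypothesis (\ref{eq: P(E A1 d)}) in dimension $d$ to each half gives probability at most $C(d) m^{d-3} (1-p)^{4m}$ of the half failing to become good. The interactions along the shared face between the two halves are controlled by events analogous to $D$, $D'$, and $E(1,2m)$ from the proof of Lemma~\ref{lemma: E^d A^d}, each with complement probability at most $C(d)(1-p)^{4m}$. Combining via Fact~\ref{fact} gives a total bound of order $m^{d-3}(1-p)^{4m}$, comfortably inside the target $C(d) m^{d-1}(1-p)^{4m-8}$.

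The hard case is $h=1$, where the two bad subcubes are adjacent. WLOG they are $[m]^{d+1}$ and $[(m+1, 2m), (1, m), \ldots, (1, m)]$, differing only in the first coordinate. Slicing along any other coordinate places both bad subcubes in the same slice, so the clean reduction fails. I would instead generalize the event $H'$ from the $d=3$ proof: let
\[
H' := \bigcap \bigl\{ \text{for every adjacent pair of length-}(2m-2) \text{ line segments parallel to } e_1 \text{ in } [(2,2m-1),(1,m),\ldots,(1,m)], \text{ at least one is non-empty} \bigr\}.
\]
There are $O(m^{d-1})$ adjacent pairs and each pair is empty with probability $(1-p)^{4m-4}$, so $\mathbb{P}((H')^c) \leq C(d) m^{d-1}(1-p)^{4m-4}$. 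With $H'$, side events analogous to $B_8$ and $B_{10}$, and the inductive goodness of slices $B((1,2m),d,i)$ for $i \notin \{1, m, m+1, 2m\}$, every vertex of $[2m]^{d+1}$ becomes infected except on the sides of $[2m]^{d+1}$ and on the $2d$ shared faces $\text{Perm}_{d+1}[(1),(1,2m),\ldots,(1,2m)]$. These residual faces are handled exactly as in Case 1 of Lemma~\ref{lemma: E A2 B}: introduce events $E_1(y=1), E_2(y=1), E_3(y=1)$ and their analogues on each face, then apply Fact~\ref{fact} iteratively to collapse the complementary probabilities to $(1-p)^{4m-8}$.

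The main obstacle is the bookkeeping of Case $h=1$: one must verify that the combination of $H'$, the side events, and the inductively good slices really does propagate infection into the adjacent bad region, and that the total failure probability of all the complementary events collapses (via repeated application of Fact~\ref{fact}) to the target $C(d)m^{d-1}(1-p)^{4m-8}$. The geometric mechanism is identical to the $d=3$ proof; the only genuine difference is that the number of faces, cross-terms, and index parameters grows with $d$, so the content is careful accounting rather than new ideas.
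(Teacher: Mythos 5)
Your stratification by the Hamming distance $h$ between the two bad subcubes matches the paper's organization (Case~1 is $h=1$, Case~2 is $h=d+1$, Case~3 is $2\le h\le d$), and your treatment of the $h=1$ case via the event $H'$ tracks the paper's Case~1 closely. However, there is a genuine gap in your $h\ge 2$ argument.

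You propose to slice $[2m]^{d+1}$ along a coordinate axis $k$ where the two bad subcubes disagree ``into two halves'' and then apply the inductive bound (\ref{eq: P(E A1 d)}) ``to each half.'' But a half is a $[m]\times [2m]^d$ box, not a $[2m]^d$ cube, so (\ref{eq: P(E A1 d)}) does not type-check: the hypothesis is a statement about $[2m]^d$ cubes partitioned into $2^d$ sub-$[m]^d$-cubes, and neither the shape nor the scaling of the half matches. What the paper actually does --- and what you already use implicitly in your $h=1$ discussion --- is slice into \emph{thickness-one} $d$-dimensional cross-sections $B((1,2m),d,i)$ for $i\in[2m]$, each of which genuinely is a $[2m]^d$ cube; then, since every interior vertex of such a slab has one infected neighbor in an adjacent slab, the slab effectively runs the $d$-dimensional process with threshold $d$, and the induction hypothesis applies slab by slab. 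Crucially, the paper slices along $x_{d+1}$ (a coordinate where, in Cases~1 and~3, the two bad subcubes \emph{agree}), so a given slab may contain $0$, $1$, or $2$ bad sub-$[m]^d$-cubes, and the proof invokes whichever of (\ref{eq: P(E A d)}), (\ref{eq: P(E A1 d)}), (\ref{eq: P(E A2 d)}) is appropriate. Your instinct to slice along a \emph{disagree} coordinate so that (\ref{eq: P(E A1 d)}) suffices uniformly would, if carried out at thickness one, simplify the paper's Case~3 bookkeeping, but as written (``two halves'') the inductive step does not go through. You would also need to supply the explicit description, after each phase of spreading, of which vertices remain uninfected and which face-events close the gaps --- the paper devotes the bulk of Cases~1--3 to exactly this, and the proposal stops at naming analogues of $D$, $D'$, $E(1,2m)$ without verifying that the events it combines really force $E^{d+1}$.
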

\begin{proof}
There are exactly two bad subcubes, i.e 
$$[(a_1,a_1+m-1),(a_2,a_2+m-1),...,(a_{d+1},a_{d+1}+m-1)]$$ where $a_i=\{1,m+1 \}$ for $i \in [d+1]$ are good except for exactly two of them.

\textbf{Case 1}. Assume that two bad subcubes are $[m]^{d+1}$ and 
\[
\begin{array}{c}
[(m+1,2m),\underbrace{(1,m),(1,m),\cdots, (1,m)}_{d}].
\end{array}
\]
After $m^{d+1}$ steps let us describe the uninfected vertices on $[2m]^{d+1}$. 

Every vertex on $B((1,2m),d,i)$ for $i \in \{m+2,m+3,\cdots,2m-1\}$ has been infected except for those of the form $[(a_1),(a_2),\cdots,(a_{d}),(i)]$, where $a_j \in \{1,m,m+1,2m\}$ for $j \in [d]$. \smallskip

For the vertices on $B((1,2m),d,j)$, where $j \in \{m+1,2m\}$, all of them have been infected except for those on the edges of $$[(a_1,a_1+m-1),(a_2,a_2+m-1),...,(a_d,a_d+m-1),(j)]$$ where $a_i=\{1,m+1 \}$ for $i \in [d]$. \medskip

For the vertices on $B((1,2m),d,j)$ where $j \in \{1,m\}$, the vertices on $B((1,m),d,j)$ and $[(m+1,2m),\underbrace{(1,m),\cdots, (1,m)}_{d-1},(j)]$ have not been infected. Additionally, the vertices on the edges of $$[(a_1,a_1+m-1),(a_2,a_2+m-1),...,(a_d,a_d+m-1),(j)]$$ where $a_i=\{1,m+1 \}$ for $i \in [d]$. \medskip

For the vertices on $B((1,2m),d,j)$ for $j \in \{2,3,\cdots,m-1\}$, the vertices on $B((1,m),d,j)$ and $[(m+1,2m),\underbrace{(1,m),\cdots, (1,m)}_{d-1},(j)]$  have not been infected. Additionally, the vertices of the form $[(a_1),(a_2),\cdots,(a_{d}),(j)]$, where $a_j \in \{1,m,m+1,2m\}$ for $j \in [d]$. 

\vspace{2mm}

Now assume that $B'$ occurs. Then after at most $m^{d+1}+C(d)m$ steps, every vertex on \\$[\underbrace{(1,2m),(1,2m),\cdots,(1,2m)}_{d},(j)]$ for $j \in \{m+2,\cdots,2m-1\}$ has been infected  except possibly for those on the edges of $[2m]^{d+1}$ and are of the form $[(a_1,a_2,\cdots,a_{i-1},m,a_{i+1},\cdots,a_d,j)]$ and $[(a_1,a_2,\cdots,a_{i-1},m+1,a_{i+1},\cdots,a_d,j)]$ where $a_i \in \{1,2m\}$ for $i \in [d]$.  

Therefore, every vertex on $B((1,2m),d,2m)$ and $B((1,2m),d,m+1)$ except for those on the edges of $B((1,2m),d,2m)$ and $B((1,2m),d,m+1)$ has one infected vertex that do not belong to $B((1,2m),d,2m)$ and $B((1,2m),d,m+1)$. We can use the induction to have that 
$$\mathbb{P}((E_{m+1}^d)^c \cap A_2^{d+1})\leq C(d)(1-p)^{4m},$$ 
and 
$$\mathbb{P}((E_{2m}^d)^c \cap A_2^{d+1}) \leq C(d)(1-p)^{4m}.$$ 

Similarly, every vertex on the subcube $B((1,2m),d,m)$ has an infected neighbor in  $B((1,2m),d,m+1)$ except for those that are in the edges of $B((1,2m),d,m)$. The induction hypothesis can be applied  and we have have 
$$\mathbb{P}(E_{m}^{d} \cap A_2^{d+1}) \leq C(d)m^{d-1}(1-p)^{4m-8}.$$

Similarly, then every vertex on the subcube $B((1,2m),d,m-1)$ has an infected neighbor in  $B((1,2m),d,m)$ except for those that are in the edges of $B((1,2m),d,m-1)$. The induction hypothesis can be applied to have 
$$\mathbb{P}(E_{m-1}^{d} \cap A_2^{d+1}) \leq C(d)m^{d-1}(1-p)^{4m-8}.$$

Inductively, we have for $i \in [m]$
$$\mathbb{P}(E_{i}^{d} \cap A_2^{d+1}) \leq C(d)m^{d-1}(1-p)^{4m-8}.$$


Let us describe the possibly uninfected vertices after $2m^{d+1}+C(d)m^2$ steps. 

\vspace{2mm}
The vertices on $B((1,2m),d,1)$ and $B((1,2m),d,2m)$ have been infected except for those on the edges of $[2m]^{d+1}$. 

\vspace{2mm}

The vertices on $B((1,2m),d,j)$ where $j \in \{2,3,\cdots,m,m+1\}$ have been infected except for those that are of the form 
$$[(a_1),(a_2),\cdots,(a_{i-1}),(1,2m),(a_{i+1}),\cdots, (a_{d}),(j)]$$
where $a_i=1$ for $i \in [d]$, and of the form
$$[(a_1),(a_2),\cdots,(a_{i-1}),(m),(a_{i+1}),\cdots, (a_{d}),(j)]$$
$$[(a_1),(a_2),\cdots,(a_{i-1}),(m+1),(a_{i+1}),\cdots, (a_{d}),(j)]$$
where $a_i \in \{1,2m\}$ for $i \in [d]$. 

\vspace{2mm}
The vertices on $B((1,2m),d,j)$ where $j \in \{m+2,m+3,\cdots,2m-1\}$ have been infected except for those of the form 
$$[(a_1),(a_2),\cdots,(a_{i-1}),(m),(a_{i+1}),\cdots, (a_{d}),(j)]$$
$$[(a_1),(a_2),\cdots,(a_{i-1}),(m+1),(a_{i+1}),\cdots, (a_{d}),(j)]$$
where $a_i \in \{1,2m\}$ for $i \in [d]$ and $j \in \{m+2,m+3,\cdots,2m-1\}$.

\vspace{2mm}
Let us consider the uninfected vertices on $\text{Perm}_{d+1}[(1,2m),\underbrace{(1),\cdots,(1)}_{d-1},(1,2m)]$.   Due to symmetry we only need to analyze the uninfected vertices on $[(1,2m),\underbrace{(1),\cdots,(1)}_{d-1},(1,2m)]$. The analysis is in the same fashion of that in the proof of Lemma \ref{lemma: E A2 B}, specifically in the case where two bad cubes are adjacent.

The other uninfected vertices can be handled in the same way as Lemma \ref{lemma: E^d A^d}. 

\textbf{Case 2.} Assume that two bad subcubes are $[m]^{d+1}$ and $[\underbrace{(m+1,2m),(m+1,2m),\cdots,(m+1,2m)}_{d+1}]$. 

Let us describe the uninfected vertices after $m^{d+1}$ steps. 

Every vertex on $B((1,2m),d,i)$ for $i \in \{m+2,m+3,\cdots,2m-1\}$ has been infected except for those on $[(m+1,2m),(m+1,2m),\cdots,(m+1,2m),(i)]$ and of the form $[(a_1),(a_2),\cdots,(a_{d}),(i)]$, where $a_j \in \{1,m,m+1,2m\}$ for $j \in [d]$. \smallskip

For the vertices on $[(1,2m),(1,2m),\cdots,(1,2m),(j)]$, where $j \in \{m+1,2m\}$, all of them have been infected except for those on $[(m+1,2m),(m+1,2m),\cdots,(m+1,2m),(j)]$ and 
the edges of $$[(a_1,a_1+m-1),(a_2,a_2+m-1),...,(a_d,a_d+m-1),(j)]$$ where $a_i=\{1,m+1 \}$ for $i \in [d]$. \medskip

    For the vertices on $B((1,2m),d,j^{'})$ where $j^{'} \in \{1,m\}$, the vertices on $B((1,m),d,j^{'})$ have not been infected. Additionally, the vertices on the edges of $$[(a_1,a_1+m-1),(a_2,a_2+m-1),...,(a_d,a_d+m-1),(j^{'})]$$ where $a_i=\{1,m+1 \}$ for $i \in [d]$ have not been infected. \medskip

For the vertices on $B((1,2m),d,j^{''})$ for $j^{''} \in \{2,3,\cdots,m-1\}$, the vertices on $B((1,m),d,j^{''})$ have not been infected. Additionally, the vertices of the form $[(a_1),(a_2),\cdots,(a_{d}),(j^{''})]$, where $a_j \in \{1,m,m+1,2m\}$ for $j \in [d]$, have not been infected. 

Assume that the event $D$ happens and then let us describe the uninfected vertices after $m^{d+1}+C(d)m$ steps. 

Every vertex on $B((1,2m),d,i)$ for $i \in \{m+2,m+3,\cdots,2m-1\}$ has been infected except for those of the form $[(a_1),(a_2),\cdots,(a_{d}),(i)]$, where $a_j \in \{1,2m\}$ for $j \in [d]$. Additionally, the vertices on $[(a_1),(a_2),\cdots,a_{j-1},(1,m),a_{j+1},\cdots,(a_{d}),(i)]$ for $i \in \{m+2,m+3,\cdots,2m-1\}$ where $a_j=2m$ for $j \in [d]$, have not been infected. \smallskip

For the vertices on $B((1,2m),d,j)$, where $j \in \{m+1,2m\}$, all of them have been infected except for those on the edges of $$[(a_1,a_1+m-1),(a_2,a_2+m-1),...,(a_d,a_d+m-1),(j)]$$ where $a_i=\{1,m+1 \}$ for $i \in [d]$. \medskip

For the vertices on $B((1,2m),d,j^{'})$where $j^{'} \in \{1,m\}$, the vertices on $[(1,m),(1,m),\cdots, (1,m),(j^{'})]$ have not been infected. Additionally, the vertices on the edges of $$[(a_1,a_1+m-1),(a_2,a_2+m-1),...,(a_d,a_d+m-1),(j^{'})]$$ where $a_i=\{1,m+1 \}$ for $i \in [d]$ have not been infected. \medskip

For the vertices on $B((1,2m),d,j^{''})$ for $j^{''} \in \{2,3,\cdots,m-1\}$, the vertices of the form $[(a_1),(a_2),\cdots,(a_{d}),(j^{''})]$, where $a_j \in \{1,2m\}$ for $j^{''} \in [d]$. Additionally, the vertices on $[(a_1),(a_2),\cdots,a_{j-1},(1,m),a_{j+1},\cdots,(a_{d}),(i)]$ for $i \in \{m+2,m+3,\cdots,2m-1\}$ where $a_j=1$, have not been infected. \smallskip

Then we can apply exactly the same method from the case where there is only one bad subcube to handle the rest of the uninfected vertices. \medskip 

\textbf{Case 3.} Assume that the two bad subcubes are $[m]^{d+1}$ and 
\[
\begin{array}{c}
[ \underbrace{(m+1,2m),(m+1,2m),\cdots, (m+1,2m) }_{l} \underbrace{,(1,m),(1,m),\cdots, (1,m),}_{d-l}(1,m)]
\end{array}
\]
where $ 2 \leq l \leq d$.

Then let us describe the uninfected vertices after $m^{d+1}$ steps. 

Every vertex on $B((1,2m),d,i)$ for $i \in \{m+2,m+3,\cdots,2m-1\}$ has been infected except for those of the form $[(a_1),(a_2),\cdots,(a_{d}),(i)]$, where $a_j \in \{1,m,m+1,2m\}$ for $j \in [d]$. \smallskip

For the vertices on $B((1,2m),d,j)$, where $j \in \{m+1,2m\}$, all of them have been infected except for those on the edges of $$[(a_1,a_1+m-1),(a_2,a_2+m-1),...,(a_d,a_d+m-1),(j)]$$ where $a_i=\{1,m+1 \}$ for $i \in [d]$. \medskip

For the vertices on $B((1,2m),d,j^{'})$ where $j^{'} \in \{1,m\}$, the vertices on $B((1,m),d,j^{'})$ and 
\[
\begin{array}{c}
[ \underbrace{(m+1,2m),(m+1,2m),\cdots, (m+1,2m) }_{l} \underbrace{,(1,m),(1,m),\cdots, (1,m),}_{d-l}(j^{'})]
\end{array}
\]
have not been infected. Additionally, the vertices on the edges of $$[(a_1,a_1+m-1),(a_2,a_2+m-1),...,(a_d,a_d+m-1),(j^{'})]$$ where $a_i=\{1,m+1 \}$ for $i \in [d]$ have not been infected. \medskip

For the vertices on $B((1,2m),d,j^{''})$ for $j^{''} \in \{2,3,\cdots,m-1\}$, the vertices on $B((1,m),d,j^{''})$ and 
\[
\begin{array}{c}
[ \underbrace{(m+1,2m),(m+1,2m),\cdots, (m+1,2m) }_{l} \underbrace{,(1,m),(1,m),\cdots, (1,m),}_{d-l}(j^{''})]
\end{array}
\]
have not been infected. Additionally, the vertices of the form $[(a_1),(a_2),\cdots,(a_{d}),(j^{''})]$, where $a_j \in \{1,m,m+1,2m\}$ for $j \in [d]$, have not been infected. 

Now assume that the event $D'$ occurs. Then let us describe the uninfected vertices after $m^{d+1}+C(d)m$ steps. 

Every vertex on $B((1,2m),d,i)$ for $i \in \{m+2,m+3,\cdots,2m-1\}$ has been infected except for those of the form $[(a_1),(a_2),\cdots,(a_{d}),(i)]$, where $a_j \in \{1,2m\}$ for $j \in [d]$. \smallskip

For the vertices on $B((1,2m),d,j)$, where $j \in \{m+1,2m\}$, all of them have been infected except for those on the edges of $$[(a_1,a_1+m-1),(a_2,a_2+m-1),...,(a_d,a_d+m-1),(j)]$$ where $a_i=\{1,m+1 \}$ for $i \in [d]$. \medskip

For the vertices on $B((1,2m),d,j^{'})$ where $j^{'} \in \{1,m\}$, the vertices on $[(1,m),(1,m),\cdots, (1,m),(j^{'})]$ and 
\[
\begin{array}{c}
[ \underbrace{(m+1,2m),(m+1,2m),\cdots, (m+1,2m) }_{l} \underbrace{,(1,m),(1,m),\cdots, (1,m),}_{d-l}(j^{'})]
\end{array}
\]
have not been infected. Additionally, the vertices on the edges of $$[(a_1,a_1+m-1),(a_2,a_2+m-1),...,(a_d,a_d+m-1),(j^{'})]$$ where $a_i=\{1,m+1 \}$ for $i \in [d]$ have not been infected. \medskip

For the vertices on $B((1,2m),d,j^{''})$ for $j^{''} \in \{2,3,\cdots,m-1\}$, the vertices on $B((1,m),d,j^{''})$ and 
\[
\begin{array}{c}
[ \underbrace{(m+1,2m),(m+1,2m),\cdots, (m+1,2m) }_{l} \underbrace{,(1,m),(1,m),\cdots, (1,m),}_{d-l}(j^{''})]
\end{array}
\]
have not been infected. Additionally, the vertices of the form $[(a_1),(a_2),\cdots,(a_{d}),(j^{''})]$, where $a_j \in \{1,m,m+1,2m\}$ for $j \in [d]$, have not been infected. 

Note that after $m^{d+1}+C(d)m$ steps, every vertex on $B((1,2m),d,j)$ where $j \in \{m+1,2m\}$ except for those on the edges of $B((1,2m),d,j)$ where $j \in \{m+1,2m\}$ has one infected neighbor which does not belong to $B((1,2m),d,j)$ where $j \in \{m+1,2m\}$. Thus we can use the induction hypothesis to deal with the uninfected vertices that do not belong to the edges of $B((1,2m),d,j)$ where $j \in \{m+1,2m\}$. 

Similarly for the uninfected vertices on $B((1,2m),d,j)$ where $j \in [m]$ we can use the induction hypothesis to deal with the uninfected vertices that do not belong to the edges of $B((1,2m),d,j)$ where $j \in [m]$. 

For the rest of the uninfected vertices, we can use the same approach from the case where there is only one bad subcube to handle. 
\end{proof}

Now we will use the inequality derived in Lemma \ref{lemmaV3} to derive an important property of the grid size $L$, which states that the probability percolation fails to happen in a grid $[L]^d$ correlates to the probability of the existence of an empty double line segments of length $L$. 

Before stating the following lemma, we need to define a quantity $K=K(p)$ which will be used multiple times from now on in this section. Define 
\begin{align*}
K(p):=
\begin{cases}
  \exp^{(d-1)}(\frac{2\lambda}{p}) & \text{if $p \leq p_0$} \\
    \exp^{(d-1)}(\frac{2\lambda}{p_0}) & \text{if $p \geq p_0$,}
\end{cases}
\end{align*}
where $\exp^{l}(\cdot)$ denotes iterating the exponential function $l$ times and $p_0$ will be defined later.

\begin{lemma} \label{lemmaV4}
    If $L \geq 16K^3 \left(\log \frac{1}{1-p} \right)^2 \left(\frac{1}{\log\frac{1}{\delta}} \right)^2$, then the probability $\eta_L$ that a grid $[L]^d$ is bad satisfies 
   $$\eta_L \leq B(d)L^{d-1}(1-p)^{2L-8},$$
   where $B(d)>0$ and $\delta \leq \frac{1}{C(d)}$. 
\end{lemma}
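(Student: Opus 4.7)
The plan is to iterate the recursion of Lemma~\ref{lemmaV3}, $\eta_{2m,d}\leq C(d)\eta_{m,d}^{3}+C(d)m^{d-1}(1-p)^{4m-8}$, starting from a base case $m_{0}=K$ where the critical-probability estimate of Theorem~\ref{theoremV3} guarantees $\eta_{K,d}\leq \delta$. Indeed, the definition $K(p)=\exp^{(d-1)}(2\lambda/p)$ gives $\log^{(d-1)}K=2\lambda/p$, so Theorem~\ref{theoremV3} yields $p_{c}([K]^{d},d)=(\lambda+o(1))/\log^{(d-1)}(K)=(1+o(1))p/2$; since $p$ sits a factor of two above the critical probability, the probability that $[K]^{d}$ is internally spanned (hence good) can be made $\geq 1-\delta$ for any constant $\delta\leq 1/C(d)$. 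Write $f(m):=B(d)m^{d-1}(1-p)^{2m-8}$ for the target upper bound; the goal is to reach the invariant $\eta_{m,d}\leq f(m)$ at some intermediate scale $m^{\ast}\leq L$ and then preserve it by repeated doubling up to $L$.

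I would split the iteration into two phases. In Phase~I (triple-exponential contraction), setting $m_{k}=2^{k}K$ and iterating the recursion, the cubic term dominates whenever $\eta_{m_{k}}^{3}\geq m_{k}^{d-1}(1-p)^{4m_{k}-8}$, producing the rough estimate $\eta_{m_{k}}\lesssim\delta^{3^{k}}$. A short logarithmic calculation shows that $\delta^{3^{k}}\leq f(m_{k})$ is first achieved at $k^{\ast}\approx\log\bigl(2K\log\tfrac{1}{1-p}/\log\tfrac{1}{\delta}\bigr)/\log(3/2)$, corresponding to a scale $m_{k^{\ast}}$ of order $K\cdot\bigl(K\log\tfrac{1}{1-p}/\log\tfrac{1}{\delta}\bigr)^{\log 2/\log(3/2)}$; the exponent $\log 2/\log(3/2)\approx 1.71$ is strictly below $2$, so the same computation shows this scale is safely below $16K^{3}(\log\tfrac{1}{1-p})^{2}/(\log\tfrac{1}{\delta})^{2}$. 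In Phase~II (invariant propagation), assuming $\eta_{m}\leq f(m)$, Lemma~\ref{lemmaV3} gives
\[
\eta_{2m,d}\leq C(d)B(d)^{3}m^{3(d-1)}(1-p)^{6m-24}+C(d)m^{d-1}(1-p)^{4m-8};
\]
choosing $B(d)$ so that $B(d)\cdot 2^{d-1}\geq 4C(d)$ makes the additive term at most $\tfrac{1}{2}f(2m)$, while the cubic term is at most $\tfrac{1}{2}f(2m)$ once $2m\log\tfrac{1}{1-p}\geq 2(d-1)\log m+O_{d}(1)$, an inequality that holds automatically at scales $m\geq K$.

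To conclude, for an arbitrary $L$ in the hypothesized range I would take the largest $k$ with $2^{k}K\leq L$, run Phases~I and II up to $m_{k}$, and apply the recursion once more with $m=\lfloor L/2\rfloor$ to carry the invariant to scale $L$. The main obstacle is the quantitative bookkeeping in Phase~I: verifying that the specific threshold $16K^{3}(\log\tfrac{1}{1-p})^{2}/(\log\tfrac{1}{\delta})^{2}$ really does suffice to drive $\eta_{m}$ below $f(m)$, given the interplay between the triple-exponentially contracting cubic term and the double-exponentially shrinking additive target. Once that crossover is made quantitative, the base case via Theorem~\ref{theoremV3} and the Phase~II induction are routine.
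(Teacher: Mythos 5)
Your proposal is correct and follows essentially the same route as the paper: take the base case $\eta_{K}\leq\delta$ from Theorem~\ref{theoremV3}, iterate the cubic part of the recursion of Lemma~\ref{lemmaV3} to get the triple-exponential decay $\eta_{2^{r}K}\lesssim C^{(3^{r}-1)/2}\delta^{3^{r}}$, and locate the crossover $r$ beyond which the additive term dominates, which is exactly the inequality $\frac{3^{r}}{2}\log\frac{1}{\delta}\geq 2^{r+1}K\log\frac{1}{1-p}$ giving the threshold with exponent $\log_{3/2}2<2$. The only structural difference is your explicit Phase~II (propagation of the invariant $\eta_{m}\leq f(m)$ under one more doubling), which the paper leaves implicit by working at dyadic scales $L=2^{r}K$; your Phase~II is a welcome clarification but does not change the argument.
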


\begin{proof}

We will make use of Theorem~\ref{theoremV3}. Since $p_c([n]^d,r=d)=\frac{\lambda(1+o(1))}{\log^{(d-1)} n}$, by taking $p_0$ small enough, 
we have $$R([K]^d,r=d,p)=1-o(1),$$
where $R([K]^d,d,p)$ is the probability that every vertex on $[K]^d$ will be infected by the percolation process with the infection threshold $r=d$ if each vertex is initially infected with probability $p$.

Therefore, let $\delta >0$ and we have 
$$\eta_K \leq \delta.$$

From Lemma~\ref{lemmaV3}, we have 
$$\eta_{2m} \leq C(d)\eta_m^3+B(d)m^{d-1}(1-p)^{4m-8},$$ 
and thus 
$$\eta_{2m} \leq 2\max \{C(d)\eta_m^3,B(d)m^{d-1}(1-p)^{4m-8} \}$$

We will use $C$ and $B$ to denote $C(d)$ and $B(d)$ respectively in the proof. 

Note that if 
\begin{align} \label{eq: eta}
C\eta_m^3 \le Bm^{d-1}(1-p)^{4m-8}, 
\end{align}
then we have 
$$\eta_{2m} \le Bm^{d-1}(1-p)^{4m-8},$$ and thus the desired result.

From the recursive relation $\eta_{2m} \leq C\eta_m^3$ with the initial condition $\eta_K \leq \delta,$ we have 
$$\eta_{2^rK} \leq C^{\frac{3^r-1}{2}} \delta^{3^r}.$$

In order to satisfy (\ref{eq: eta}) it suffices to have  
$$C^{\frac{3^r-1}{2}} \delta^{3^r} \leq B(2^{r-1}K)^{d-1}(1-p)^{4\times 2^{r-1}K-8},$$ 
which is equivalent to
\begin{align} \label{equation1}
    \frac{3^r-1}{2}\log C- 3^r \log \frac{1}{\delta} \leq \log B +(d-1)  \log (2^{r-1}K)-2^{r+1}K \log \frac{1}{1-p}.
\end{align}

Assume $\delta \leq \frac{1}{C}$ by taking $p_0$ sufficeintly small. Then it is easy to see that as long as 
$$\frac{3^r}{2} \log \frac{1}{\delta} \geq 2^{r+1}K \log \frac{1}{1-p},$$ is satisfied,  (\ref{equation1}) is satisfied.

Therefore, we have 
$$\frac{3^r}{2^r} \geq 4K \left(\log \frac{1}{1-p} \right) \left(\frac{1}{\log \frac{1}{\delta}} \right),$$ which is equivalent to 
$$\left(\frac{3}{2} \right)^{r\log_{\frac{3}{2}}2}K \geq K\left(4K \left(\log \frac{1}{1-p} \right) \left(\frac{1}{\log \frac{1}{\delta}} \right) \right)^{\log_{\frac{3}{2}}2}.$$

Thus if $L \geq K \left(4K \left(\log \frac{1}{1-p} \right) \left(\frac{1}{\log \frac{1}{\delta}} \right) \right)^{\log_{\frac{3}{2}}2}$, then 
$$\eta_L \leq BL^{d-1}(1-p)^{2L-8}. \qedhere $$
\end{proof}

Now fix $L=16K^3\left(\log \frac{1}{1-p} \right)^2 \left(\frac{1}{\log \frac{1}{\delta}} \right)^2$. 
 
Let us provide some explanations before stating and proving the following lemma. We would like to estimate the probability that a particular vertex, say the origin, is uninfected at time $t$. We show that this particular probability is roughly the same as the probability of having an empty line segment of length $t$ starting on the origin. As mentioned in the introduction, the existence of an initially uninfected $[2t+1] \times [2]^{d-1}$ rectangle "near" the origin implies that the origin will not be infected at time $t$. Thus the following lemma is essentially saying that the probability from other configurations that also prevent the origin from being infected at time $t$ is negligibly small.

\begin{lemma} \label{lemmaV5}
    Let $t$ be an integer and $t'=L^d$. Every vertex of $[n]^d$ is initially infected with probability $p$ independent of any other vertex. Then 
    $$\mathbb{P}( \text{the origin is uninfected at time t} ) \leq C(d)\frac{(1-p)^{t-t'}}{p}.$$

\end{lemma}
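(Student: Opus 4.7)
I will implement the strategy sketched in Section~\ref{sect 3}, with Lemma~\ref{lemmaV4} as the key quantitative input. First, I translate the event $\{\text{origin uninfected at time }t\}$ into the existence of a monotone cube-path of bad cubes. Iterating the pigeonhole observation from Section~\ref{sect 3}---that an uninfected vertex has at least $d+1$ uninfected neighbors at the previous time step, hence at least one of its $d$ positive-direction neighbors is also uninfected then---produces a sequence $x_0 = 0,\,x_1,\,\dots,\,x_{t-t'}$ with $x_{i+1} - x_i \in \{e_1, \dots, e_d\}$ such that each $x_i$ is uninfected at time $t'$. Since $t' = L^d$ and every \emph{good} $[L]^d$ cube has its interior fully infected by time $L^d$, each $x_i$ lying in the interior of its containing $[L]^d$ tile forces that tile to be \emph{bad}.

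Second, I perform a union bound over such cube-paths. Fix a tiling of a neighborhood of the origin by disjoint $[L]^d$ cubes, and let $D_1, \dots, D_m$ be the sequence of distinct tiles visited by the vertex-path. Consecutive tiles differ by $+e_j$ in cube coordinates for some $j$, and if $k_j$ denotes the number of $e_j$-steps taken, then
\begin{align*}
m \,=\, 1 + \sum_{j=1}^d \lfloor k_j / L \rfloor \,\ge\, (t-t')/L - d + 1.
\end{align*}
The number of cube-paths of length $m$ starting at the origin's tile is at most $d^{m-1}$, and since distinct tiles are disjoint, their bad-events are independent. Applying Lemma~\ref{lemmaV4},
\begin{align*}
\mathbb{P}(\text{origin uninfected at time } t) \,\le\, \sum_{m \,\ge\, (t-t')/L - d + 1} d^{m-1}\, \eta_L^m \,\le\, \sum_{m \,\ge\, (t-t')/L - d + 1} \bigl(d\, B(d)\, L^{d-1}(1-p)^{2L-8}\bigr)^m.
\end{align*}
By the choice of $L$ in Lemma~\ref{lemmaV4} one has $L \log(1/(1-p)) \gg (d-1)\log L$, so the sub-exponential prefactors $(d B(d) L^{d-1})^m$ are comfortably absorbed into one factor of $(1-p)^{-(t-t')}$, the common ratio is tiny, and the series is dominated by its first term. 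That first term is at most $(1-p)^{2(t-t')}$ up to bounded factors, which is well below the required $C(d)(1-p)^{t-t'}/p$; the extra $1/p$ absorbs the geometric tail $\sum_{k \ge 0}(1-p)^k \le 1/p$ and other polynomial slack.

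The main obstacle is that the paper's notion of a good cube only requires its interior, not its $d\cdot 2^{d-1}$ one-dimensional sides, to be infected by time $L^d$. Consequently, if some $x_i$ happens to lie on a side of its containing tile, the tile is not forced to be bad and the cube-path reduction above breaks down there. I will circumvent this by working with a small (at most $2^d$) collection of shifted tilings chosen so that every vertex of $\mathbb{Z}^d$ lies in the interior of at least one of them; at each $x_i$ I then require the tile in the appropriate shifted tiling (the one where $x_i$ is interior) to be bad. The overhead from choosing among tilings costs only a bounded $d$-dependent factor in the union bound, which is absorbed into $C(d)$.
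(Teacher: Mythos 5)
Your reduction to a monotone cube-path of bad tiles, together with Lemma~\ref{lemmaV4} and a closing geometric series, matches the skeleton of the paper's proof. However, the step you flag as ``the main obstacle'' is a genuine gap, and the shifted-tilings fix you propose does not repair it. The difficulty is sharper than ``some $x_i$ may miss the interior'': because the sides of $[L]^d$ form a one-dimensional skeleton, a \emph{monotone} path can cross an entire tile while staying on its sides, so with a single fixed tiling an adversarial uninfected path can fail to certify badness of \emph{any} tile it visits, and the count of forced-bad tiles can collapse to zero.

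Your repair chooses for each $x_i$ a tile $T_i$ from one of $2^d$ shifted grids with $x_i\in\text{int}(T_i)$. This is geometrically achievable for $L$ large enough, but it destroys the independence underlying the bound $\eta_L^m$: tiles drawn from different shifted grids can overlap in as many as $(L-1)^d$ vertices, and badness is a decreasing event, so overlapping bad-events are positively correlated rather than independent. If you try to restore disjointness by thinning to a subsequence of $x_i$'s spaced far enough apart, $\ell_1$-monotonicity only gives $\|x_{i+k}-x_i\|_\infty\ge k/d$, so you need spacing of order $dL$ to guarantee disjoint $[L]^d$ tiles; you then retain only about $(t-t')/(dL)$ independent bad tiles, and Lemma~\ref{lemmaV4} yields a rate of roughly $(1-p)^{2(t-t')/d}$, which for $d\ge 3$ falls strictly short of the required $(1-p)^{t-t'}$ even before the $d^m$ counting factor is paid. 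The ``bounded $d$-dependent overhead'' you claim is therefore not a constant; what is lost is an exponential-rate factor. The paper closes this gap inside a single fixed tiling by classifying visited tiles as bad or \emph{semi-good} and extracting extra decay from side-traversals of semi-good tiles via the buffer argument: if a tile's interior is already infected at time $t'=L^d$ but the path crosses a side, each side-vertex already has $d-1$ infected interior neighbors, which forces an adjacent buffer of $d(L-2)$ vertices to remain uninfected at $t'$, giving a per-tile rate $(1-p)^{d(L-2)}$ for side-traversals; combined with the run-length bookkeeping over alternating bad and semi-good stretches this yields a per-tile rate $(1-p)^{4(L-2)/3}$, which still dominates the $d^l$ union-bound factor. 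Your proof needs this mechanism, or some substitute that penalizes paths dwelling on tile skeletons; the shifted-tiling trick cannot stand in for it.
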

\begin{proof}
Let us introduce some notation before proceeding. Let $[(a)^k,(b)^l]:=[\underbrace{(a),\cdots,(a)}_{k},\underbrace{(b),\cdots,(b)}_{l}]$.

Suppose that the origin is uninfected at time $t$. Then at least one of the vertices among $[(1),(0)^{d-1}]$, $[(0),(1),(0)^{d-2}]$ ,$\cdots$, and $[(0)^{d-1},(1)]$ has to be uninfected at time $t-1$ since the origin has $2d$ neighbors and the infection threshold $d$. Without loss of generality if $[(1),(0)^{d-1}]$ is uninfected at time $t-1$ then one of the vertices among $[(2),(0)^{d-1}]$ , $[(1),(1),(0)^{d-2}]$ ,$\cdots$, and $[(1),(0)^{d-2},(1)]$ \} has to be uninfected at time $t-2$ by the same reason. It is easy to see that there has to exist a path starting at the origin and moving along only in the directions of the standard basis vectors $e_1,e_2,\cdots,e_d$ of initially uninfected vertices of length $t$.

Therefore, there exists a path $y_1,y_2,\cdots,y_{t-t'}$, starting at the origin, of initially uninfected vertices of length $t-t'$ at time $t'$. Let $M >0$ be the maximal such that at least two coordinates of $y_{t-t'-M}$ are non-zero. By definition if $M=0$, then the path $y_1,y_2,\cdots,y_{t-t'}$ starts at the origin and keeps going straight along the standard basis vector $e_1$, $e_2$,..., or $e_d$. We will show that the most likely way to guarantee the origin to be uninfected at time $t$ is to have the path $y_1,y_2,\cdots,y_{t-t'}$ starting at the origin and parallel to the standard basis vector $e_1$, $e_2$, $\cdots$ , or $e_d$ which corresponds to $M=0$. This corresponds to the heuristic that the minimal configuration of uninfected vertices for the origin being uninfected at time $t$ is the most likely way to guarantee the origin to be uninfected at time $t$.

The path $y_{t-t'-M},\cdots, y_{t-t'}$ intersects an $L$-path which consists of disjoint cubes $D_1,D_2,...,D_l$ of size $[L]^d$ with $D_1=[(y_{t-t'-M}^1,y_{t-t'-M}^1+L-1),(y_{t-t'-M}^2,y_{t-t'-M}^2+L-1),...,(y_{t-t'-M,}^d,y_{t-t'-M}^d+L-1)]$ where 
$y_{t-t'-M}=(y_{t-t'-M}^1,y_{t-t'-M}^2,\cdots, y_{t-t'-M}^d)$. It is easy to see that $\frac{M}{L}-2 \leq l \leq \frac{M}{L}.$

The cubes $D_1,D_2,...,D_l$ are either semi-good or bad since at time $t'=L^d$, $y_1,...y_{t-t'}$ are uninfected.  

Let $F_2(i,j)$ denote the event that the cubes $ D_i, \cdots, D_j$ are semi-good, and that there exists a path $y_k, \cdots, y_h$ of uninfected vertices at time $t'$, which is entirely contained in the sides of these cubes. The initial vertex $y_k$ lies in one of the following sets:
\[
[(a_1, a_1+L-1), (a_2), \ldots, (a_d)],\quad
[(a_1), (a_2, a_2+L-1), \ldots, (a_d)],\quad \cdots \; \text{or} \quad
[(a_1), (a_2), \ldots, (a_{d-1}), (a_d, a_d+L-1)],
\]
where
\[
D_i = [(a_1, a_1+L-1), (a_2, a_2+L-1), \ldots, (a_d, a_d+L-1)].
\]

Similarly, the final vertex \( y_h \) lies in one of the following sets:
\[
[(b_1 - (L-1), b_1), (b_2), \ldots, (b_d)],\quad
[(b_1), (b_2 - (L-1), b_2), \ldots, (b_d)],\quad \text{or} \quad
[(b_1), (b_2), \ldots, (b_{d-1}), (b_d - (L-1), b_d)],
\]
where
\[
D_j = [(b_1 - (L-1), b_1), (b_2 - (L-1), b_2), \ldots, (b_d - (L-1), b_d)].
\]


If a path of uninfected vertices goes through one side $[(a_1,a_1+l-1),(a_2),...,(a_d)]$ of a cube of size $[L]^d$ at time $t'$ and the interior of this cube has been infected at time $t'$, then the sides $[(a_1,a_1+l-1),(a_2),(a_3),...,(a_d)]$, $[(a_1,a_1+l-1),(a_2+1),(a_3),...,(a_d)]$,...,$[(a_1,a_1+l-1),(a_2),(a_3),...,(a_d+1)]$ will have to be uninfected. Since the interior of this cube has been infected by time $t'$ every vertex on the sides has $d-1$ infected neighbors by the time $t'$. 

We need to give a definition before proceeding. Given a cube $D=[(a_1,b_1),(a_2,b_2),\cdots,(a_d,b_d)]$ where $b_i-a_i=m-1$ for $i \in [d]$, let a buffer of $D$ for the side $[(a_1),(a_2,b_2),(a_3),\cdots,(a_d)]$ be the $ [2] \times [m-2] \times \underbrace{[1] \cdots \times [1]}_{d-2}$ rectangle $[(a_1-1,a_1),(a_2+1,b_2-1),(a_3),\cdots,(a_d)]$. Define the set of buffers of $D$ for the side $[(a_1),(a_2,b_2),(a_3),\cdots,(a_d)]$ to be the set 
\begin{align*}
\{ &[(a_1-1,a_1),(a_2+1,b_2-1),(a_3),\cdots,(a_d)],\\
   &[(a_1),(a_2+1,b_2-1),(a_3-1,a_3),\cdots,(a_d)],\\
   &\cdots \\
   &[(a_1),(a_2+1,b_2-1),(a_3),\cdots,(a_d-1,a_d)]\}
 \end{align*}  
The set of buffer for the other sides of $D$ is defined similarly. Let $\mathbf{B}$ be the set of buffers of $D_i,\cdots, D_{i+r-1}$.

Now suppose that there is a consecutive $r$ semi-good cubes $D_i,...,D_{i+r-1}$ so that the event $F_2(i,i+r-1)$ happens. Since the interior of the cubes $D_i,...,D_{i+r-1}$ have been infected at time $t'$, the existence of a path of uninfected vertices along the standard basis vectors $e_1,e_2,\cdots,e_d$ going along the sides of $D_i,\cdots,D_{i+r-1}$ implies that 
at least $r-1$ of the buffers in $\mathbf{B}$ are uninfected. Since each buffer is a set of $d(L-2)$ vertices, we have 
$$\mathbb{P}(F_2(i,i+r-1)) \leq d^{r-1}(1-p)^{d(r-1)(L-2)},$$
If $r=1$, we have 
$$\mathbb{P}(F_2(i,i)) \leq \frac{2^dd}{2} (1-p)^L.$$
Indeed since $D_i$ is semi-good at least one of its sides is initially uninfected. 

Let $F_1(i,j)$ denote the event that the cubes $D_i,...D_j$ are bad. From Lemma ~\ref{lemmaV4}, we have 
$$\mathbb{P}(F_1(i,i+r-1)) \leq (BL^d(1-p)^{-8})^{r}(1-p)^{2Lr}$$

There exists a finite sequence $0=b_1 < s_1 < b_2 < s_2 < b_3 <...,$ where the last term is $l$, such that the event 
$$F=F_1(b_1,s_1-1) \cap F_2(s_1,b_2-1)\cap F_1(b_2,s_2-1) \cap  F_2(s_2,b_3-1) \cap ...$$
happens. Assume that the last term of the sequence is $l=b_{u+1}-1.$ Let $v$ be the number of $i$ such that $b_{i+1}-1=s_i$, i.e, $v$ is the number of times that there are three consecutive cubes in the sequence $D_1,...,D_l$ that are of the form bad, semi-good, bad. We have
\begin{align*}
    \mathbb{P}(F) &= \prod_{i=1}^u \mathbb{P}\left( F_1(b_i,s_i-1) \cap F_2(s_i,b_{i+1}-1) \right) \\
    & \leq (2^{d-1}d)^v (1-p)^{Lv} \prod_{i=1}^u g(p)^{s_i-b_i}(1-p)^{2L(s_i-b_i)} d^{b_{i+1}-s_i-1}(1-p)^{d(L-2)(b_{i+1}-s_i-1)} \\
    & \leq (2^{d-1}d)^v (1-p)^{Lv} \prod_{i=1}^u g(p)^{s_i-b_i}(1-p)^{2L(s_i-b_i)} d^{b_{i+1}-s_i-1}(1-p)^{2(L-2)(b_{i+1}-s_i-1)} \\
    & \leq (2^{d-1}d^2g(p))^l (1-p)^{(L-2)(2l-2u+v)},
\end{align*}
where $g(p)=BL^d(1-p)^{-8}.$

Moreover, we have $2v+3(u-v) \leq l$ by partitioning sequences of consecutive semi-good cubes of size $[L]^d$ into those of length 1 and those of length greater than 1. Thus we have $2u-v \leq \frac{2l}{3}$. Therefore,
$$\mathbb{P}(F) \leq (2^{d-1}d^2g(p))^l (1-p)^{\frac{4(L-2)l}{3}}.$$ 

For a given $l$, there are $d^l$ choices of the path along along the standard basis vector $e_1$, $e_2$,..., or $e_d$ of cubes of size $[L]^d$ and $2^l$ ways of choosing whether each cube is good or bad. Therefore, if we let H be an event that there exists a path along $e_1,e_2$,...,or $e_d$ direction of length $M$ starting from a given vertex, then we have 
$$\mathbb{P}(H) \leq (2^dd^3g(p))^{\frac{M}{L}}(1-p)^{\frac{4M(1-\frac{2}{L})}{3}} \leq (2^d d^3g(p))^{\frac{M}{L}}(1-p)^{1.3M}$$

Therefore, with $G$ being an event that the origin is uninfected at time $t$, we have 
\begin{align}
    \mathbb{P}(G) & \leq d \sum_{M=0}^{t-t'}(1-p)^{t-t'-M}(2^dd^3g(p))^{\frac{M}{L}}(1-p)^{1.3M} \\ 
    & \leq \frac{C(1-p)^{t-t'}}{1-(1-p)^{0.2}} \\ \label{eq: (1-p)^(t-t')}
    & \leq \frac{C(1-p)^{t-t'}}{p} 
\end{align}
The inequality \ref{eq: (1-p)^(t-t')} is true since $(2^dd^3g(p))^{\frac{1}{L}}(1-p)^{0.3} \leq (1-p)^{0.2}$ and $p_0$ can be sufficiently small. \qedhere

\end{proof}

Now we are ready to prove the upper bound for the percolation time. 
\begin{proof}
Let $t'=L^d$ and $H'$ be an event that there exists a vertex in $[n]^d$ which is uninfected at time $t$.
We have
\begin{align*}
    \mathbb{P}(T \geq t) &= \mathbb{P}(H')\\
    &\leq  n^d \frac{(1-p)^{t-t'}}{p} \\
    & \leq \exp \left(d \log n+\log \frac{1}{p}-(t-t')\log \frac{1}{1-p} \right)\\
    & =o(1), 
\end{align*}
if  $\frac{d \log n}{\log \frac{1}{1-p}} \leq t-t'$.
 
Therefore, with high probability 
$$T \leq C \frac{\log n}{\log \frac{1}{1-p}},$$
since $t'=o\left(\frac{\log n}{\log \frac{1}{1-p}} \right)$. 

Thus the upper bound is proved. \qedhere
\end{proof}

\section{Open problems}
In this paper we have extended one of the two main theorems in \cite{Paul} to the higher dimensional case when the initial infection probability $p(n)$ is large and the infection threshold $r=d$. There are a few questions that can be further investigated. 

Question 1: What is the distribution of the percolation time $T$ on $[n]^d$ if $ \frac{\lambda}{\log^{d-1}n} \leq p(n) \leq \frac{C}{\log^{d}(n)}$ and $r=d$?

In \cite{Paul}, this question was addressed for the case $d = 2$ and $r = 2$. The main difficulty in adapting the ideas from \cite{Paul} to higher dimensions lies in the fact that the percolation process behaves quite differently when $d = r \geq 3$ compared to the two-dimensional case. For $d = r = 2$, the results in \cite{Paul} rely heavily on the so-called rectangular process. In higher dimensions, however, it is unclear how to generalize this notion, and indeed, all existing results on the critical probability rely on induction on the dimension $d$. Nevertheless, to analyze the percolation time in higher dimensions, this inductive approach appears to be insufficient. 

Question 2: What is the distribution of the percolation time $T$ on $[n]^d$ if $r < d$ and $p(n) \geq \frac{\lambda}{\log^{d}(n)}$? 

This question remains largely open, and very little is known except in the case where the initially infected set is very dense, i.e., when $p(n)$ is close to $1$, as studied in \cite{Bollobs_bootstrap} and \cite{Bollobs2012TheTO}. The first nontrivial case arises when $d = 3$ and $r = 2$.

\section*{Acknowledgment}
The author would like to thank Prof.~Alexander Barg for suggesting this problem and providing valuable guidance. Additionally, the author extends thanks to Yihan Zhang for insightful discussions.

\bibliographystyle{plain}
\bibliography{main.bib}

\end{document}